\documentclass[11pt,twoside,a4paper]{article}
\usepackage{amsmath,amssymb,amsthm}

\usepackage{graphicx,psfrag}

\newtheorem{theorem}{Theorem}[section]
\newtheorem{lemma}[theorem]{Lemma}
\newtheorem{proposition}[theorem]{Proposition}
\newtheorem{corollary}[theorem]{Corollary}

\theoremstyle{definition}
\newtheorem{example}[theorem]{Example}

\theoremstyle{remark}
\newtheorem{remark}[theorem]{Remark}
\newtheorem{definition}[theorem]{Definition}

\newcommand{\R}{\mathbb{R}}

\newcommand{\N}{\mathbb{N}}

\newcommand{\C}{\mathbb{C}}

\newcommand{\cA}{\mathcal{A}}

\newcommand{\cM}{\mathcal{M}}

\newcommand{\al}{\alpha}

\newcommand{\om}{\omega}
\newcommand{\Om}{\Omega}
\newcommand{\si}{\sigma}

\newcommand{\la}{\lambda}

\renewcommand{\phi}{\varphi}

\newcommand{\CAT}{\operatorname{CAT}}

\newcommand{\id}{\operatorname{id}}

\newcommand{\crt}{\operatorname{crt}}

\newcommand{\sm}{\setminus}
\newcommand{\sub}{\subset}

\begin{document}

\title{A M\"obius Characterization of Metric Spheres}
\author{Thomas Foertsch \&  Viktor Schroeder}

\maketitle

\begin{abstract}
In this paper we characterize compact extended Ptolemy metric spaces with 
many circles up to 
M\"obius equivalence. This characterization yields a M\"obius 
characterization of the $n$-dimensional spheres $S^n$
and hemispheres $S^n_+$ when endowed with their chordal metrics.
In particular,
we show that every compact extended Ptolemy metric space with
the property that every three points are contained in a circle
is M\"obius equivalent to $(S^n,d_0)$ for some $n\ge 1$, the
$n$-dimensional sphere $S^n$ with its chordal metric.
\end{abstract}


\section{Introduction}

Our main theorems in this paper, Theorems  \ref{main-theorem} and 
\ref{theo-3pt-int}, characterize  
spheres and hemispheres in the context of metric 
M\"obius geometry by the existence of many circles. 
It is useful to recall in this context
the classical charcterization of circles which goes
back to Claudius Ptolemaeus (ca. 90-168). 

\begin{theorem} \label{theo-pt} (Ptolemy's Theorem) \\
Consider four points in the Euclidean space, $x_1,x_2,x_3,x_4\in \mathbb{E}^n=(\mathbb{R}^n,d)$. Then
\begin{equation} \label{eqn-ptolemy-inequality}
d(x_1,x_3) \, d(x_2,x_4) \; \le \; d(x_1,x_2) \, d(x_3,x_4) \; + \; d(x_1,x_4) \, d(x_3,x_2).
\end{equation}
Moreover, equality holds if and only if the four points lie on a circle $C$ such that $x_2$ and $x_4$
lie in different components of $C\setminus \{ x_1 ,x_3\}$.
\end{theorem}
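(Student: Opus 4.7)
The plan is to prove Ptolemy's theorem by reducing it to the ordinary triangle inequality through a single M\"obius inversion centered at one of the four points. This approach is well-suited to the M\"obius-geometric context of the paper and delivers the inequality and its equality case from the same calculation.

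First I fix an inversion $\iota$ of $\R^n \cup \{\infty\}$ centered at $x_1$ with some radius $r>0$ and set $y_j := \iota(x_j)$ for $j=2,3,4$. The standard distortion formula for Euclidean inversion,
\[
d(y_j,y_k) \;=\; \frac{r^{2}\, d(x_j,x_k)}{d(x_1,x_j)\, d(x_1,x_k)}, \qquad j,k\in\{2,3,4\},
\]
converts the triangle inequality $d(y_2,y_4)\le d(y_2,y_3)+d(y_3,y_4)$ applied to the triple $y_2,y_3,y_4$, after clearing the common factor $r^{2}/\bigl(d(x_1,x_2)\,d(x_1,x_3)\,d(x_1,x_4)\bigr)$, into exactly Ptolemy's inequality \eqref{eqn-ptolemy-inequality}.

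For the equality statement, the triangle inequality collapses to an equality precisely when $y_3$ lies on the closed Euclidean segment from $y_2$ to $y_4$. Since $\iota$ sends circles through $x_1$ to lines not through $x_1$ and vice versa, and preserves lines through $x_1$, collinearity of $y_2,y_3,y_4$ is equivalent to $x_1,x_2,x_3,x_4$ lying on a common circle $C$ (with a straight line through $x_1$ allowed as a degenerate case). The point $x_1$ is the $\iota$-preimage of the point at infinity on the line carrying the $y_j$'s, so the two rays of that line exterior to the segment $[y_2,y_4]$ lift to the arc of $C$ from $x_4$ to $x_2$ passing through $x_1$, while the segment through $y_3$ lifts to the complementary arc through $x_3$. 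This is exactly the assertion that $x_2$ and $x_4$ lie in different components of $C\setminus\{x_1,x_3\}$.

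The main obstacle I anticipate is not the inequality but the bookkeeping of the equality case: one has to translate the linear order on the image line back into the cyclic order on $C$, and to treat the subcase in which $x_1,\dots,x_4$ are already collinear in $\mathbb{E}^n$. Both are purely one-dimensional matters and reduce to the elementary description of how inversion, restricted to a circle through its center, behaves as a stereographic projection onto a line.
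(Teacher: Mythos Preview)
The paper does not prove Theorem~\ref{theo-pt}; it is quoted in the introduction as the classical result of Ptolemy and serves only as motivation for the definitions that follow. There is therefore no ``paper's own proof'' to compare against.

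Your inversion argument is the standard modern proof and is correct. A couple of small points worth making explicit when you write it out in full: first, the inversion step presupposes that $x_1$ is distinct from $x_2,x_3,x_4$; when two of the four points coincide the inequality~\eqref{eqn-ptolemy-inequality} degenerates to a product of nonnegative terms or to a triangle inequality and can be disposed of separately in one line. Second, you have already flagged the collinear subcase, and you should state clearly that in this paper a straight line in $\mathbb{E}^n$ is \emph{not} a circle (the paper's circles are homeomorphic to $S^1$), so the equality characterisation as literally phrased in Theorem~\ref{theo-pt} tacitly excludes collinear quadruples --- or, equivalently, one should read ``circle'' there in the generalised sense that includes lines (circles through $\infty$ in $\mathbb{E}^n\cup\{\infty\}$). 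Your treatment of the cyclic order via the stereographic picture is exactly the right way to handle the translation between ``$y_3$ lies on the segment $[y_2,y_4]$'' and ``$x_2,x_4$ separate $x_1,x_3$ on $C$''.
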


A metric space $(X,d)$ is called a {\em Ptolemy metric space} if inequality (\ref{eqn-ptolemy-inequality})
holds for arbitrary quadruples in the space.
We call a subset
$\si \sub X$ a
{\em circle}, if
$\si$ is homeomorphic to
$S^1$ and for any for points
$x_1,x_2,x_3,x_4$ on
$\si$ (in this order) we have
equality in 
(\ref{eqn-ptolemy-inequality}).

Consider on $S^n$ the
chordal metric $d_0$,
i.e. the metric induced by its standard embedding $S^n \hookrightarrow \mathbb{E}^{n+1}$.
Via the stereographic projection
$(S^n,d_0)$ is 
M\"obius equivalent to
$\mathbb{E}^n\cup\{\infty\}$
and by Theorem \ref{theo-pt}
a Ptolemy metric space such that through
any three points there exists a circle.

\begin{theorem} \label{main-theorem}
Let 
$(X,d)$ be a compact extended Ptolemy metric space which contains
at least three points.
If any three points in $X$ lie on a circle, then
$X$ is M\"obius equivalent to 
$(S^n,d_0)$ for some $n\in \mathbb{N}$.
\end{theorem}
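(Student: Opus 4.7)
The strategy is to reduce the theorem to a metric characterization of Euclidean space by ``unfolding'' $X$ through a Möbius inversion at a basepoint. Fix a point $\om \in X$ at finite distance to all other points, and equip $Y := X \sm \{\om\}$ with the inverted metric
\[
d_\om(x,y) \; := \; \frac{d(x,y)}{d(x,\om)\,d(y,\om)}.
\]
Applying the Ptolemy inequality to quadruples of the form $\{\om, x, y, z\}$ shows that $d_\om$ satisfies the triangle inequality and is itself a Ptolemy metric, and by construction the identity on the underlying set is a Möbius equivalence. Since $(S^n, d_0)$ is the one-point Möbius completion of $\mathbb{E}^n$ via stereographic projection, it suffices to show that $(Y, d_\om)$ is isometric to $\mathbb{E}^n$ for some $n \ge 1$.

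The next step is to translate the hypothesis into a strong geodesic property of $(Y, d_\om)$. Given distinct $x, y \in Y$, the assumption provides a circle $\si \sub X$ through $x$, $y$, $\om$, and using the Ptolemy-equality characterization of circles together with the formula for $d_\om$ one verifies that $\si \sm \{\om\}$, suitably parameterized, is an isometric embedding of $\R$ into $(Y, d_\om)$ containing $x$ and $y$. Thus any two points of $Y$ lie on a bi-infinite geodesic line. Compactness of $X$ moreover makes $(Y, d_\om)$ proper: a $d_\om$-bounded subset of $Y$ is bounded away from $\om$ in $X$, so has compact closure in $Y$.

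It remains to establish the following intrinsic statement: a proper Ptolemy metric space with at least three points in which every pair of points lies on a complete geodesic line is isometric to $\mathbb{E}^n$ for some $n \ge 1$. My plan here is first to use the Ptolemy inequality to deduce uniqueness of geodesics, hence uniqueness of the line through any two given points. I then fix an origin $0 \in Y$ and use the pencil of lines through $0$ to endow $Y$ with the structure of an $\R$-vector space; Ptolemy equalities along circles through $\om$ in $X$ should transport to parallelogram-type identities in $(Y, d_\om)$, exactly what is needed to make vector addition well defined and continuous and all translations isometries. Once the metric is translation-invariant, $(Y, d_\om)$ becomes a Ptolemy normed space; Schoenberg's classical theorem then identifies the norm as arising from an inner product, and properness forces the dimension to be finite.

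The principal obstacle is the vector-space construction: showing that the pencil of geodesic lines through $0$ organizes into a globally consistent affine structure, and that the addition law dictated by the Ptolemy-parallelogram identities is well defined, associative, and isometric. Everything else in the plan -- the Möbius inversion, the passage to a proper space, the final appeal to Schoenberg, and the recognition of the Möbius completion as $S^n$ -- is comparatively routine once this structural step is in hand.
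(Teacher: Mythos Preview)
Your overall architecture matches the paper's exactly: invert at a point $\om$, observe that circles through $\om$ become complete geodesic lines in the proper Ptolemy space $(Y,d_\om)$, and then argue that $(Y,d_\om)$ is isometric to some $\mathbb{E}^n$ so that Schoenberg's theorem and the one-point compactification finish the job. The divergence is precisely at the step you yourself flag as the ``principal obstacle'': you propose to build the vector-space structure on $Y$ by hand, from the pencil of lines through a chosen origin together with unspecified ``Ptolemy-parallelogram identities'', but you give no mechanism for this. Circles through $\om$ become \emph{lines} in $Y$, not parallelograms, and it is not at all clear which Ptolemy equalities in $X$ would descend to the midpoint or translation identities you need; associativity and translation-invariance of the putative addition are genuine problems, not routine verifications. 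As written this is a plan with its central lemma missing.

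The paper circumvents exactly this difficulty by an indirect route. Rather than constructing the linear structure, it shows that Busemann functions on $(Y,d_\om)$ are \emph{affine}: convexity is immediate from the Ptolemy inequality, while the opposite inequality is obtained by placing the midpoint configuration on a Ptolemy circle through a far-out point of the ray and passing to the limit (Proposition~4.2). Since every pair of points lies on a line, the associated Busemann function separates them, so affine functions separate points. One then invokes the Hitzelberger--Lytchak theorem (a geodesic metric space on which affine functions separate points embeds isometrically in a strictly convex normed space) to get the normed-vector-space structure for free; the line-extension property forces the image to be the whole space, and Schoenberg plus local compactness yield $\mathbb{E}^n$. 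If you want to pursue your direct construction, you would effectively have to reprove a special case of Hitzelberger--Lytchak; the Busemann-function argument is the missing idea that makes this tractable.
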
 

For the notion of extended metric spaces, i.e. metric spaces allowing one point at infinity, see
section \ref{sec:moeb-geo}. \\

Finally, we define a Ptolemy segment to be the analogon of a  segment of a circle. 
\begin{definition}
Let $d$ be a metric on some closed, bounded interval $I\subset \mathbb{R}$, giving back its
standard topology. Then $(I,d)$ is called a {\em Ptolemy segment} if equality holds in
Inequality (\ref{eqn-ptolemy-inequality}) whenever $x_1,x_2,x_3,x_4$ lie in this order
on the segment $I$.
\end{definition}

For Ptolemy metric spaces with the property that through each three
given points there exists a Ptolemy segment containing these points,  
we obtain the analogon of Theorem \ref{main-theorem} as follows.

\begin{theorem} \label{theo-3pt-int}
Let 
$(X,d)$ be a compact extended Ptolemy metric space containing at least three points.
If any three points in $X$ lie on a circle or
on a Ptolemy segment, then
$(X,d)$ is 
M\"obius equivalent to 
either some 
$(S^n,d_0)$  or to some hemisphere 
$(S^n_+,d_0)$ for some $n\in \mathbb{N}$.
\end{theorem}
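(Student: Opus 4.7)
The strategy is to reduce Theorem \ref{theo-3pt-int} to Theorem \ref{main-theorem} via a doubling construction. First I would dichotomize on whether every triple in $X$ already lies on a circle. If yes, Theorem \ref{main-theorem} immediately yields that $(X,d)$ is M\"obius equivalent to some $(S^n,d_0)$. So I may assume there exists a triple in $X$ lying on a Ptolemy segment but on no circle.

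In this remaining case, define the boundary $\partial X \subset X$ to consist of all points which are endpoints of some maximal Ptolemy segment that does not extend to a circle in $X$. A preliminary analysis of a single Ptolemy segment, using only the equality case of Ptolemy's inequality, should establish: (i) a Ptolemy segment is determined up to canonical isometry by its two endpoints together with one interior point; (ii) any two Ptolemy segments sharing the same pair of endpoints can be glued along those endpoints to form a metric circle. This uniqueness-and-gluing lemma is the key that makes the subsequent doubling well-defined.

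Next I would construct the metric double $(\tilde X, \tilde d) = X \cup_{\partial X} X'$ by gluing a second isometric copy $X'$ of $X$ along $\partial X$. The extended distance $\tilde d$ is specified by the requirement that, for every maximal Ptolemy segment $I \subset X$ with endpoints in $\partial X$, the union $I \cup I' \subset \tilde X$ becomes a circle in $\tilde X$; concretely, for $p \in X$ and $q' \in X'$ one defines $\tilde d(p,q')$ from the Ptolemy equation along such a circle through $p,q$ and a chosen pair of boundary points. One then checks that $\tilde d$ is well-defined, that $(\tilde X, \tilde d)$ is a compact extended Ptolemy space, and that every three points in $\tilde X$ lie on a circle: triples inside one copy that in $X$ lay only on a Ptolemy segment now lie on the associated doubled circle, while triples straddling both copies reduce, via the copy-swapping involution $\tau$, to the previous case. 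Applying Theorem \ref{main-theorem} to $\tilde X$ yields $\tilde X$ M\"obius equivalent to some $(S^n,d_0)$. Since $\tau$ is an isometric, hence M\"obius, involution of $\tilde X$ whose fixed set is the image of $\partial X$, this fixed set corresponds to an equatorial $S^{n-1} \subset S^n$, so that $X$ corresponds to a closed hemisphere $(S^n_+, d_0)$.

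The main obstacle is to show that $\tilde d$ is well-defined and that the Ptolemy inequality holds globally on $\tilde X$. Well-definedness requires that the reflection induced by any two different Ptolemy segments between a fixed pair of boundary points must coincide; the Ptolemy inequality for quadruples straddling $\partial X$ must then be verified by carefully reducing, through $\tau$ and the gluing lemma, to the Ptolemy inequality inside $X$ itself, which may require a compactness argument to produce a suitable doubled circle through any prescribed pair $p \in X$, $q' \in X'$.
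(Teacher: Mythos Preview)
Your doubling strategy is conceptually attractive but takes a completely different route from the paper, and it has a genuine gap precisely where you flag ``the main obstacle''.

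The paper does \emph{not} reduce Theorem~\ref{theo-3pt-int} to Theorem~\ref{main-theorem}. Instead it fixes $z\in X$, sets $M=(X\setminus\{z\},d_z)$, and proves directly that Busemann functions on $M$ are affine (this is harder than in the circle case because $M$ need not be geodesic and uniqueness of segments is \emph{not} available a priori; the paper states explicitly ``we do not know uniqueness in the moment''). These affine Lipschitz functions give an isometric embedding of $M$ into a Banach space; one then shows the affine hull of $M$ is finite-dimensional and Euclidean (via Schoenberg), reducing to a classification of closed $3$-point segment subspaces of $\R^k\cup\{\infty\}$ (Proposition~\ref{prop:eucl}), which yields $\R^k$, a halfspace, or a ball complement.

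Your approach breaks down at two points. First, claim (i) is false as stated: Section~\ref{sec-charac-circles} shows there are many isometry types of Ptolemy segments with the same endpoint distances and the same interior-point data (Proposition~\ref{prop-segment} and Remark~\ref{rem-segments}); only the M\"obius type is determined by three points. So the ``uniqueness-and-gluing lemma'' you rely on does not follow from the Ptolemy equality alone.

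Second, and more seriously, the definition of $\tilde d(p,q')$ is circular. Concretely, if $I$ is a segment through $p,q$ with endpoints $a,b\in\partial X$, the Ptolemy equality on the doubled circle $I\cup I'$ forces
\[
\tilde d(p,q')=\frac{d(a,p)\,d(b,q)+d(a,q)\,d(b,p)}{d(a,b)}.
\]
But for $n\ge 2$ there is (already in the model $S^n_+$) a one-parameter family of such segments through $p,q$, hence many choices of $(a,b)$. Showing that the right-hand side is independent of $(a,b)$ is a nontrivial identity that, in the model, encodes the Euclidean structure of the ambient $\R^{n+1}$; in the abstract setting you have no such structure yet. The same problem recurs when you try to verify the triangle and Ptolemy inequalities for $\tilde d$ on quadruples straddling $\partial X$: each reduction ``via $\tau$'' lands you on a different doubled circle, and comparing them again requires the very rigidity you are trying to prove. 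In short, the doubling can only be carried out \emph{after} one knows $X$ sits inside some $(S^n,d_0)$, which is exactly what the paper establishes by the analytic route.
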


On the way to our main results we obtain some other
results about Ptolemy metric spaces.

There are many
different isometry types of Ptolemy circles (cf. Propositions \ref{prop-segment} and \ref{prop-circle}
as well as Remarks \ref{rem-segments} and \ref{rem-circles}),  but
there is only one M\"obius type.

\begin{theorem} \label{thm:moebch-circ}
Let $C$ and $C'$ be Ptolemy circles.
Let $x_1,x_2,x_3$ and $x'_1,x'_2,x'_3$ be distinct
points on $C$ respectively on $C'$.
Then there exists a unique
M\"obius homeomorphism
$\varphi:C\to C'$ with $\varphi(x_i)=x'_i$.
\end{theorem}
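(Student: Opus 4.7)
The strategy is to reduce the statement to a canonical model and rigidify the comparison by M\"obius-invariant cross-ratios. Since $(S^1,d_0)$ is itself a Ptolemy circle by Theorem~\ref{theo-pt}, it suffices to prove: for any Ptolemy circle $C$ with three distinct marked points $x_1,x_2,x_3$, there is a unique M\"obius homeomorphism $\phi_C\colon C\to (S^1,d_0)$ sending $(x_1,x_2,x_3)$ to a prescribed ordered triple $(y_1,y_2,y_3)$. The map of the theorem is then $\phi_{C'}^{-1}\circ \phi_C$, and uniqueness is inherited.

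\textbf{Constructing $\phi_C$.} The marked points partition the topological circle $C$ into three open arcs. On the arc whose cyclic order together with the marked triple is $(x_1,\cdot,x_2,x_3)$, Ptolemy's equality
\[
d(x_1,x_2)\,d(x,x_3)\;=\;d(x_1,x)\,d(x_2,x_3)+d(x,x_2)\,d(x_1,x_3)
\]
determines $d(x,x_3)$ from $d(x,x_1),d(x,x_2)$, so the cross-ratio
\[
t(x)\;=\;\frac{d(x_1,x)\,d(x_2,x_3)}{d(x,x_2)\,d(x_1,x_3)}\in(0,\infty)
\]
encodes all M\"obius information of the quadruple $(x_1,x_2,x_3,x)$. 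Ptolemy's equality applied to cyclic quadruples $(x_1,x,x',x_2)$ yields strict monotonicity of $t$, whence $t$ is a homeomorphism of the open arc onto $(0,\infty)$. Perform the analogous construction on the other two arcs, and on $(S^1,d_0)$ with its marked triple; then define $\phi_C$ arc by arc by matching $t$-values.

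\textbf{M\"obius property.} The crucial step, and the main obstacle, is showing that $\phi_C$ preserves the cross-ratio of \emph{every} quadruple on $C$, not merely of those containing $\{x_1,x_2,x_3\}$. The key observation is that, using Ptolemy's equality on a cyclic quadruple built from $\{x_i,x_j,p_a,p_b\}$ (in a cyclic order dictated by the positions of $p_a,p_b$ relative to the marked points) and then substituting the definition of $t$, one obtains a factorisation
\[
d(p_a,p_b)\;=\;|t(p_a)-t(p_b)|\,g(p_a)\,g(p_b)
\]
for an explicit positive function $g\colon C\to (0,\infty)$ depending only on distances to the marked points. Exactly the same factorisation holds on $(S^1,d_0)$ with $g(y)=2/\sqrt{1+y^2}$ under a stereographic identification $S^1\cong\R\cup\{\infty\}$, so in any metric cross-ratio the $g$-factors cancel and the cross-ratio reduces to the Euclidean cross-ratio of the $t$-values. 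Since $\phi_C$ preserves $t$, it preserves cross-ratios, and is therefore M\"obius.

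\textbf{Uniqueness.} For two M\"obius maps $\phi,\psi\colon C\to C'$ both sending $x_i\mapsto x_i'$ and any $x\in C$, the cross-ratios $\crt(x_1,x_2,x_3,x)$, $\crt(x_1',x_2',x_3',\phi(x))$, and $\crt(x_1',x_2',x_3',\psi(x))$ all agree. Combined with preservation of cyclic order (which resolves the twofold ambiguity intrinsic to the symmetric Ptolemaic cross-ratio) and injectivity of the $t$-parameter on each arc of $C'$, this forces $\phi(x)=\psi(x)$.
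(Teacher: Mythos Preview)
Your strategy coincides with the paper's: parametrise each Ptolemy circle by the cross--ratio with the marked triple, show this parametrisation is a homeomorphism onto a canonical target, and compose. The paper's choice of target is slightly more efficient: instead of $(S^1,d_0)$ with a scalar parameter, it sends $t\mapsto \crt(t,x_1,x_2,x_3)\in\partial\Delta\subset\R P^2$, which treats all three arcs at once and avoids your arc--by--arc bookkeeping. Injectivity is then obtained exactly as you indicate, via the Ptolemy equality on the arc containing two preimages.

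Where your write--up is looser than the paper's is the M\"obius step. Your factorisation $d(p_a,p_b)=|t(p_a)-t(p_b)|\,g(p_a)\,g(p_b)$ is correct on a single arc (with $g$ proportional to the distance to one fixed marked point), but you defined $t$ separately on each of the three arcs; as stated it is not clear that the three local parameters and the three local $g$'s merge into one global factorisation valid for $p_a,p_b$ on different arcs, which is precisely what you need for arbitrary quadruples. The clean way to close this---implicitly available in the paper's framework---is to invert at $x_3$: then $C\setminus\{x_3\}$ is isometric to $\R$, your $\phi$ becomes an order--preserving map $\R\to\R$ that fixes two ratios, hence is affine, hence a similarity, hence M\"obius. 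This replaces the arc--by--arc factorisation with a single global coordinate and makes the cancellation of the $g$--factors automatic. With that adjustment your argument is complete and equivalent to the paper's; uniqueness is handled identically in both.
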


From this we obtain almost immediately the analogon of Theorem \ref{main-theorem},
when considering Ptolemy metric spaces with the property that through each four points there 
exists a Ptolemy circle containing these four points.

\begin{corollary} \label{cor-4points}
An extended Ptolemy metric space with the property that through each four points 
of the space there exists a Ptolemy circle containing these points, is M\"obius equivalent 
to $S^1$ when endowed with its chordal metric.
\end{corollary}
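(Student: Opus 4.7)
The plan is to derive the corollary in two moves: first apply Theorem \ref{main-theorem} to get a M\"obius equivalence $X\to (S^n,d_0)$ for some $n$, then rule out $n\geq 2$ using the four-point hypothesis.

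First, I would observe that the four-point hypothesis implies the three-point hypothesis of Theorem \ref{main-theorem}. A Ptolemy circle in $X$ is homeomorphic to $S^1$, so the four-point hypothesis is only non-vacuous when $|X|\ge 4$. In that case, given any three distinct points $a,b,c\in X$, pick a fourth point $d\neq a,b,c$ and apply the hypothesis to obtain a Ptolemy circle through $\{a,b,c,d\}$, which in particular contains $\{a,b,c\}$. Theorem \ref{main-theorem} then produces a M\"obius homeomorphism $X\to (S^n,d_0)$ for some $n\in\mathbb{N}$.

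Next I would show that $n=1$. Composing the M\"obius equivalence with a stereographic projection, $X$ becomes M\"obius equivalent to $\mathbb{E}^n\cup\{\infty\}$. Since the equality case of Ptolemy's inequality is a M\"obius invariant condition, Ptolemy circles in $X$ correspond to Ptolemy circles in $\mathbb{E}^n\cup\{\infty\}$, and by Theorem \ref{theo-pt} these are precisely the generalized Euclidean circles, i.e.\ honest circles in $\mathbb{E}^n$ together with affine lines completed by $\infty$. The four-point hypothesis therefore becomes the statement that every four points of $\mathbb{E}^n\cup\{\infty\}$ lie on a generalized circle. For $n\ge 2$ this is plainly false: for $n\ge 3$ pick four affinely independent points, which span a $3$-dimensional affine flat and hence cannot lie on any $1$-dimensional generalized circle; for $n=2$ a generic quadruple in $\mathbb{E}^2$ is neither collinear nor concyclic. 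This contradiction forces $n=1$, and the corollary follows.

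The main obstacle is really Theorem \ref{main-theorem} itself; once it is in hand the corollary is one short geometric observation away, which is the authors' ``almost immediately''. One could attempt a direct proof based solely on Theorem \ref{thm:moebch-circ}---say, by taking a Ptolemy circle $C$ through four fixed points and trying to show $X=C$---but that approach seems to require ruling out a priori the existence of two distinct Ptolemy circles sharing three points, for which I do not see a clean argument without some topological input (such as $\dim X=1$). Going through Theorem \ref{main-theorem} circumvents this difficulty entirely.
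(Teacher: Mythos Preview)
There is a genuine gap: Theorem \ref{main-theorem} requires the space to be \emph{compact}, and Corollary \ref{cor-4points} carries no such hypothesis. The paper even stresses this point right after stating the corollary (``Note that here we do not have to assume the space to be compact''). So your first move, reducing to the three-point hypothesis and invoking Theorem \ref{main-theorem}, is not available: you would first have to prove compactness, and nothing in your argument does this. In particular, Theorem \ref{main-theorem} relies (via the uniqueness of circles in Section \ref{subsec:uofptcirc}) on local compactness in an essential way.

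The paper's argument is in fact much more elementary and sidesteps this entirely. The four-point circle hypothesis implies that for \emph{every} admissible quadruple the cross ratio triple lies in $\partial\Delta$; this is exactly the M\"obius-invariant formulation of the Ptolemy \emph{equality}. Now pick any $p\in X$ and pass to the inverted metric $d_p$ with remote point $p$. The condition $\crt(x,y,z,p)\in\partial\Delta$ says precisely that $x,y,z$ satisfy equality in the triangle inequality for $d_p$. By Lemma \ref{lem:triangle} (the elementary characterization of subsets of $\R$), $(X\setminus\{p\},d_p)$ embeds isometrically into $\R$; connectedness (any circle through $p$ becomes a full line) then forces it to be all of $\R$, and hence $X$ is M\"obius equivalent to $\R\cup\{\infty\}\cong(S^1,d_0)$. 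No compactness, and no appeal to Theorem \ref{main-theorem}, is needed. Your closing paragraph actually gestures at a direct route but dismisses it; the point is that the ``topological input'' you were worried about is supplied not by uniqueness of circles but by the trivial Lemma \ref{lem:triangle}.
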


Note that here we do not have to assume the space to be compact. \\
However, if we drop the assumption of compactness in Theorem \ref{main-theorem},
the conclusion no longer holds. In fact
there are a lot such non-compact spaces with interesting properties (cf. Example 
\ref{example-non-unique-circle} and 
Theorem \ref{theo-emb-3-pt-circle-space}).

The paper is structured as follows: \\
In section \label{sec:moeb-geo}
we give a short introduction to metric 
M\"obius Geometry and state other preliminary results.
In section \label{sec-charac-circles} we 
classify Ptolemy circles and segments up to
M\"obius equivalence and up to isometry.
In section
\label{sec-many circles}
we proof the main results.

Our main motivation to study
Ptolemy metric spaces is the fact that they
occur naturally on the boundary at infinity of a 
$\CAT(-1)$ space. This relation is described in
\cite{FS1}.

\section{Metric M\"obius Geometry} \label{sec:moeb-geo}

\subsection{M\"obiusstructure}

Let
$X$ 
be a set which contains at least
two points.
An {\em extended metric} on 
$X$ 
is a map 
$d:X\times X \to [0,\infty ]$,
such that
there exists
a set
$\Om(d) \sub X$ with
cardinality
$\# \Om(d) \in \{ 0,1\}$, 
such that 
$d$
restricted to the set
${X\setminus \Om(d)}$ 
is a metric 
(taking only values in $[0,\infty)$) and such that
$d(x,\omega)=\infty$ 
for all 
$x\in X\setminus \Om(d)$, $\om \in \Om(d)$. 
Furthermore $d(\omega,\omega)=0$.\\
If
$\Om(d)$
is not empty,
we sometimes denote
$\om \in \Om(d)$ simply as
$\infty$ and call it the
(infinitely) remote point
of
$(X,d)$.
We often write also
$\{\om\}$ for the set
$\Om(d)$ and
$X_{\om}$ for the set
$X\sm \{\om\}$.

The topology considered on $(X,d)$ is the topology with the 
basis consisting of all open distance balls $B_{r}(x)$
around points in $x\in X_{\om}$ and the
complements $D^C$ of all closed distance balls $D=\overline{B}_r(x)$. \\

We call an extended metric space {\em complete}, if first every
Cauchy sequence in $X_{\om}$ converges and secondly
if the infinitely remote point $\om$ exists in case that $X_{\om}$
is unbounded.
For example the real line
$(\R,d)$,
with its standard metric is {\em not} complete (as extended metric space),
while
$(\R\cup\{\infty\},d)$ is complete.

We say that a quadruple 
$(x,y,z,w)\in X^4$ is 
{\em admissible}, if no entry occurs three or
four times in the quadruple.
We denote with
$Q\sub X^4$
the set of 
admissible quadruples.
We define the {\em cross ratio triple} as the map
$\crt:\ Q \to \Sigma \subset \R P^2$ 
which maps 
admissible quadruples to points in the real projective plane defined by
$$\crt(x,y,z,w)=(d(x,y)d(z,w): d(x,z)d(y,w) : d(x,w)d(y,z)),$$
here 
$\Sigma$ 
is the subset of points 
$(a:b:c) \in \R P^2$, 
where
all entries 
$a,b,c$ 
are nonnegative or all entries are nonpositive.
Note that 
$\Sigma$ can be identified with the standard $2$-simplex,
$\{(a,b,c)\, |\, a,b,c \ge 0,\, a+b+c=1\}$.

We use the standard conventions for the calculation with 
$\infty$.
If 
$\infty$ occurs once in 
$Q$, say 
$w=\infty$,
then
$\crt(x,y,z,\infty)=(d(x,y):d(x,z):d(y,z))$.
If 
$\infty$ 
occurs twice , say $z=w=\infty$ then
$\crt(x,y,\infty,\infty)=(0:1:1)$.

Similar as for the classical cross ratio there are six possible definitions by
permuting the entries and we choose the above one. 

It is not difficult to check that 
$\crt:Q\to \Sigma$ 
is continuous,
where $Q$ and $\Sigma$ carry the obvious topologies induced by
$X$ and $\R P^2$. Thus, if $(x_i,y_i,z_i,w_i) \in Q$ for $i \in \N$ and asume
$x_i\to x,\ldots,w_i\to w$, where $(x,y,z,w)\in Q$ then 
$\crt(x_i,y_i,z_i,w_i)\to\crt(x,y,z,w)$.

A map
$f:X\to Y$
between two extended metric spaces
is called
{\em M\"obius}, if 
$f$ is injective and for all
admissible quadruples
$(x,y,z,w)$ of
$X$,
$$\crt(f(x),f(y),f(z),f(w))=\crt(x,y,z,w).$$
M\"obius maps are continuous.

Two extended metric spaces
$(X,d)$ and
$(Y,d')$ are
{\em M\"obius equivalent},
if there exists a bijective
M\"obius map
$f:X\to Y$.
In this case also
$f^{-1}$ is a M\"obius map and
$f$ is in particular a
homeomorphism.\\
We say that two extended metrics
$d$ and $d'$
on a set 
$X$ are
{\em M\"obius equivalent},
if the identity map
$\id:(X,d)\to (X,d')$
is a M\"obius map.
M\"obius equivalent metrics define
the same topology on
$X$.

A {\em M\"obius structure} on a set
$X$ is a nonempty set
$\cM$ of extended metrics on
$X$,
which are
pairwise M\"obius equivalent and which is
maximal with respect to that property.

A M\"obiusstructure defines a topology
on $X$.
In general two metrics in
$\cM$ can look very different.
However if two metrics have the same remote
point at infinity, then they are homothetic.
Since this result is crucial for our considerations, we 
state it as a Lemma. 

\begin{lemma}\label{lem:homothety}
Let $\cM$ be a M\"obiusstructure on a set
$X$, and let
$d,d'\in \cM$, such that 
$\om \in X$ is the remote point of
$d$ and of $d'$. Then there exists $\la >0$,
such that
$d'(x,y)=\la d(x,y)$ for all 
$x,y \in X$.
\end{lemma}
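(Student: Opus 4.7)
The plan is to use the invariance of the cross ratio under the identity map $\id:(X,d)\to (X,d')$, evaluated on admissible quadruples in which the remote point $\omega$ appears as the fourth entry.

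First I would dispose of the degenerate cases. If $X_{\om}$ contains at most one point, then the only finite distance is $d(\om,\om)=0=d'(\om,\om)$ (or, if $|X_{\om}|=1$, there are no pairs of distinct finite points at all), and any $\la>0$ works. So we may assume $|X_{\om}|\ge 2$. If $|X_{\om}|=2$, say $X_{\om}=\{p,q\}$, simply set $\la=d'(p,q)/d(p,q)$; the asserted equality then holds trivially for the pair $(p,q)$ and for pairs involving $\om$ (both sides being $\infty$ or $0$). So the genuine case is $|X_{\om}|\ge 3$.

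Now fix three distinct points $p,q,r\in X_{\om}$. Since both $d$ and $d'$ have $\om$ as their remote point, the quadruple $(p,q,r,\om)$ is admissible, and the convention for cross ratios involving $\infty$ recalled above yields
\begin{equation*}
\crt(p,q,r,\om)=(d(p,q):d(p,r):d(q,r)),
\end{equation*}
and the analogous formula for $d'$. Because $\id$ is M\"obius, these projective triples agree, so there is a positive scalar $\mu=\mu(p,q,r)$ with $d'(p,q)=\mu\,d(p,q)$, $d'(p,r)=\mu\,d(p,r)$, and $d'(q,r)=\mu\,d(q,r)$. Define, for any two distinct points $x,y\in X_{\om}$,
\begin{equation*}
\la(x,y):=\frac{d'(x,y)}{d(x,y)}>0.
\end{equation*}
The key consequence of the preceding identity is that $\la(p,q)=\la(p,r)$ whenever $p,q,r\in X_{\om}$ are distinct; that is, $\la(p,\cdot)$ is constant on $X_{\om}\sm\{p\}$. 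Call this common value $\la(p)$.

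Finally I would use the symmetry $\la(x,y)=\la(y,x)$ to globalise. Given any two distinct points $p,q\in X_{\om}$, pick a third point $r\in X_{\om}$ (possible since $|X_{\om}|\ge 3$); then $\la(p)=\la(p,r)=\la(r,p)=\la(r)$, and similarly $\la(q)=\la(r)$. Hence $\la(p)=\la(q)$ for all $p,q\in X_{\om}$, so $\la$ is a constant. This constant then satisfies $d'(x,y)=\la\,d(x,y)$ for all finite pairs, and the equation holds trivially on pairs involving $\om$. The main (mild) obstacle is simply making sure the cross-ratio conventions at $\infty$ are handled correctly, which is exactly the formula $\crt(x,y,z,\infty)=(d(x,y):d(x,z):d(y,z))$ recorded earlier.
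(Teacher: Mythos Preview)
Your argument is correct and follows essentially the same route as the paper: both proofs exploit the identity $\crt(x,y,z,\omega)=(d(x,y):d(x,z):d(y,z))$ to conclude that all pairwise ratios $d'(x,y)/d(x,y)$ coincide. The paper's version is slightly terser---it fixes one pair $(x,y)$, sets $\la=d'(x,y)/d(x,y)$, and then for any third point $z$ reads off $d'(x,z)=\la\,d(x,z)$ and $d'(y,z)=\la\,d(y,z)$ from $\crt(x,y,z,\omega)$---whereas your globalisation via the functions $\la(p,q)$ and $\la(p)$ is a minor repackaging of the same idea (and in fact your final step could be shortened to $\la(p)=\la(p,q)=\la(q,p)=\la(q)$ without invoking a third point $r$).
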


\begin{proof}
Since otherwise the result is trivial, we can assume that
there are distinct points
$x,y \in X\sm\{\om\}$.
Choose 
$\la >0$ such that
$d'(x,y)=\la d(x,y)$.
If $z\in X\sm\{\om\}$, then
$\crt(x,y,z,\om)$ is the same in the metric
$d$ and $d'$, hence
$(d'(x,y):d'(x,z):d'(y,z))=(d(x,y):d(x,z):d(y,z)).$
Since $d'(x,y)=\la d(x,y)$
we therefore obtain
$d'(x,z)=\la d(x,z)$ and
$d'(y,z)=\la d(y,x)$.
\end{proof}

\subsection{Ptolemy spaces}

An extended metric space $(X,d)$ is called a {\it Ptolemy space}, if for all 
quadruples of points $\{ x,y,z,w\} \in X^4$
the {\it Ptolemy inequality} holds
\begin{displaymath}
d(x,y) \, d(z,w) \; \le \; d(x,z) \, d(y,w) \; 
+ \; d(x,w) \, d(y,z)
\end{displaymath}

We can reformulate this condition in 
terms of the cross ratio triple. 
Let 
$\Delta \subset \Sigma$ 
be the set of points 
$(a:b:c)\in \Sigma$, such that
the entries 
$a,b,c$ 
satisfy the triangle inequality. This is obviously well defined.
If we identify 
$\Sigma\sub \R P^2$ 
with the standard $2$-simplex, i.e. the convex hull
of the unit vectors 
$e_1,e_2,e_3$, then 
$\Delta$ is the convex subset
spanned by 
$(0,\frac{1}{2},\frac{1}{2})$,
$(\frac{1}{2},0,\frac{1}{2})$ and
$(\frac{1}{2},\frac{1}{2},0)$. 
We denote by
$\hat{e}_1:=(0:1:1)$,
$\hat{e}_2:=(1:0:1)$ and
$\hat{e}_3:=(1:1:0)$.
Note that also 
$\Delta$ is homeomorphic to a $2$-simplex and
$\partial \Delta$ is homeomorphic to $S^1$.
Then an
extended space is  Ptolemy, if 
$\crt(x,y,z,w) \in \Delta$ 
for all allowed quadruples $Q$.

This description shows that 
the Ptolemy property is M\"obius invariant and thus a property
of the M\"obiusstructure
$\cM$.

The importance of the Ptolemy property comes from the following fact.

\begin{theorem}
A M\"obiusstructure
$\cM$ on a set
$X$ is
Ptolemy, if and only if for all
$z\in X$ there
exists 
$d_z\in\cM$ with
$\Om(d_z)=\{z\}$. 
\end{theorem}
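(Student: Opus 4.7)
The strategy has two sides: for the $(\Leftarrow)$ direction I pick a metric in $\cM$ in which Ptolemy reduces to the triangle inequality, and for $(\Rightarrow)$ I produce the required $d_z$ by metric inversion at $z$.

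For $(\Leftarrow)$, I use that the cross-ratio triple $\crt(x,y,z,w)$ is by definition preserved under M\"obius equivalence, hence is an invariant of $\cM$; so I may evaluate it using whichever metric in $\cM$ is convenient. Given an admissible $(x,y,z,w)\in Q$ with distinct entries, I pass to the metric $d_w\in\cM$ supplied by the hypothesis, in which $w$ is the remote point. By the stated convention for cross ratios involving one infinite entry, $\crt(x,y,z,w)=(d_w(x,y):d_w(x,z):d_w(y,z))$, and this triple lies in $\De$ precisely because $d_w$ satisfies the triangle inequality on $X\setminus\{w\}$. Admissible quadruples with a repeated entry are handled by direct inspection (one side of the Ptolemy inequality vanishes or both sides agree tautologically).

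For $(\Rightarrow)$, fix any $d\in\cM$ and $z\in X$. If $z$ is already the remote point of $d$, take $d_z:=d$. Otherwise define the metric inversion at $z$,
\[
d_z(x,y):=\frac{d(x,y)}{d(x,z)\,d(y,z)}\qquad(x,y\ne z,\ \text{both finite in }d),
\]
and $d_z(x,z):=\infty$ for $x\ne z$. If $d$ has a remote point $p\ne z$, I extend by $d_z(x,p):=1/d(x,z)$ for $x\ne z,p$; this value is forced by continuity of $\crt$, since $\crt(x,y,z,p)\to\crt(x,p,z,p)=(1:0:1)$ as $y\to p$ pins down the ratio $d(x,y)/d(y,z)\to 1$.

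Three verifications remain. The key one is that $d_z$ is an extended metric. After multiplying through by the positive quantity $d(x,z)\,d(y,z)\,d(u,z)$, the triangle inequality $d_z(x,y)+d_z(y,u)\ge d_z(x,u)$ becomes
\[
d(x,y)\,d(u,z)+d(y,u)\,d(x,z)\ \ge\ d(x,u)\,d(y,z),
\]
which is exactly the Ptolemy inequality applied to $(x,y,u,z)$; triangle inequalities involving $p$ clear to become the ordinary triangle inequality for $d$. Next, $d_z$ is M\"obius equivalent to $d$: substituting the inversion formula into $\crt_{d_z}(x_1,x_2,x_3,x_4)$ cancels the four factors $d(x_i,z)$ from each coordinate and returns $\crt_d(x_1,x_2,x_3,x_4)$, with the special cases where $z$ or $p$ appears among the $x_i$ handled analogously via the $\infty$-conventions. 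By maximality of $\cM$ it follows that $d_z\in\cM$, and $z$ is the remote point of $d_z$ by construction. I expect the main obstacle to be purely notational: cleanly bookkeeping the case of an existing remote point $p$ of $d$ in parallel with the generic case. The substantive content—that the triangle inequality for the inverted metric is literally the Ptolemy inequality for the original—is a one-line computation.
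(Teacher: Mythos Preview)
Your proposal is correct and follows essentially the same route as the paper: for $(\Rightarrow)$ you construct $d_z$ by metric inversion at $z$ and verify that the triangle inequality for $d_z$ is literally the Ptolemy inequality for $d$, and for $(\Leftarrow)$ you evaluate the cross-ratio triple in a metric with one of the four points at infinity so that membership in $\De$ becomes the triangle inequality. The paper is slightly more compressed---it reads off both the triangle inequality and M\"obius equivalence from the single identity $(d_z(x,y):d_z(y,w):d_z(x,w))=(d(x,y)d(z,w):d(x,z)d(y,w):d(x,w)d(y,z))$---but the content is the same.
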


\begin{proof}
Assume that 
$\cM$ is Ptolemy and that
$z\in X$.
Choose some 
$d\in \cM$.
If $z\in \Om(d)$, we have
our desired metric.
If not we define
$d_z:X\times X\to [0,\infty]$ by
\begin{align*}
d_z(x,y)&\ \  =\ \ \frac{d(x,y)}{d(z,x)d(z,y)}\ \  &{\mbox for }\ \ \ x,y\in X\sm(\Om(d)\cup \{z\}), \\
d_z(x,\om)&\ \ =\ \ \frac{1}{d(z,x)} &{\mbox for}\ \  x\in X\sm \Om(d),\\
d_z(z,x)&\ \ =\ \ \infty &{\mbox for}\ \  x\in X\sm \{z\}
\end{align*}

Since for 
$x,y,w \in X\sm\{z\}$
\begin{align*}
&(d_z(x,y): d_z(y,w):d_z(x,w))\ \ = \\
&\ \ \ \ \ \ \  (d(x,y)\,d(z,w):d(x,z)\ d(y,w):d(x,w)\,d(y,z))\in \Delta
 \end{align*}
we see that 
$d_z$
satisfies the triangle inequality and hence
$d_z \in \cM$. 

If on the other hand for every
$z\in X$
there is a metric
$d_z\in \cM$ 
with
$\Om(d_z) =\{z\}$,
then for all
$x,y,w \in X\sm\{z\}$ and
all 
$d\in \cM$
\begin{align*}
 &(d(x,y)\,d(z,w):d(x,z)\ d(y,w):d(x,w)\,d(y,z))=\\
&(d_z(x,y)\,d_z(z,w):d_z(x,z)\ d_z(y,w):d_z(x,w)\,d_z(y,z))=\\
&(d_z(x,y): d_z(y,w):d_z(x,w))\in \Delta
\end{align*}
which implies the Ptolemy inequality.
\end{proof}

For a Ptolemy metric space $(X,d)$ we call $(X,d_z)$ the inversion of $(X,d)$ at $z\in X$. \\

On the other hand we allways have bounded metrics
in a M\"obius structure.

\begin{lemma}
Let $\cM$ be a Ptolemy M\"obius structure on a set
$X$. Then there exists a bounded metric
$d\in \cM$.
\end{lemma}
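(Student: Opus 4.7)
The plan is to construct a bounded metric in $\cM$ by \emph{sphericalization} of an arbitrary $d\in\cM$. Pick any $d\in\cM$; if $d$ has a remote point, call it $\om$, and choose a basepoint $p\in X\sm\{\om\}$ (if $d$ has no remote point, $p$ is arbitrary and the $\om$-formulas below are omitted). Define $\tilde d:X\times X\to[0,\infty)$ by
$$\tilde d(x,y)\;=\;\frac{d(x,y)}{(1+d(x,p))(1+d(y,p))}\qquad\text{for }x,y\in X\sm\{\om\},$$
together with $\tilde d(x,\om)=1/(1+d(x,p))$ and $\tilde d(\om,\om)=0$. I would verify that $\tilde d$ is a bounded (honest, not extended) metric on $X$ which is M\"obius equivalent to $d$; by the maximality built into the definition of a M\"obius structure this forces $\tilde d\in\cM$, proving the lemma.

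Boundedness and M\"obius invariance are the routine parts. For boundedness, the $d$-triangle inequality yields $\tilde d(x,y)\le (d(x,p)+d(p,y))/[(1+d(x,p))(1+d(p,y))]\le 2$, while $\tilde d(x,\om)\le 1$ by construction. For M\"obius invariance, each factor $1+d(x_m,p)$ appearing in the denominator of some $\tilde d(x_i,x_j)$ occurs exactly once in each of the three products making up the cross ratio triple $\crt_{\tilde d}(x_1,x_2,x_3,x_4)$, so these factors cancel and $\crt_{\tilde d}=\crt_d$ on all admissible quadruples; the quadruples involving $\om$ work out because the formulas have been arranged so that $\tilde d(x,\om)$ is precisely the limit of $\tilde d(x,y)$ as $d(y,p)\to\infty$.

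The substantive step, and the only place where the Ptolemy hypothesis is actually used, is the triangle inequality for $\tilde d$. Given $x,y,z\in X\sm\{\om\}$, the plan is to apply the Ptolemy inequality to the quadruple $(x,y,z,p)$ to get
$$d(x,z)\,d(y,p)\;\le\;d(x,y)\,d(z,p)+d(y,z)\,d(x,p),$$
add to this the ordinary triangle inequality $d(x,z)\le d(x,y)+d(y,z)$ (which rewrites the $1$'s), and divide through by $(1+d(x,p))(1+d(y,p))(1+d(z,p))$; a short rearrangement yields exactly $\tilde d(x,z)\le\tilde d(x,y)+\tilde d(y,z)$. Triangles with one vertex at $\om$ reduce directly to the $d$-triangle inequality among $x,z$ and $p$. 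The principal obstacle is precisely this combination of the Ptolemy inequality with the ordinary triangle inequality, which produces the extra factor $1+d(x_i,p)$ needed to make the sphericalized metric satisfy the triangle inequality; everything else is formal bookkeeping.
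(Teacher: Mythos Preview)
Your proposal is correct and uses exactly the same sphericalization formula as the paper. The only difference is in packaging: the paper interprets the construction as adjoining an auxiliary point $\omega$ at distance $d(o,x)+1$ from each $x$ and then inverting at $\omega$, whereas you verify the triangle inequality for $\tilde d$ directly by adding the Ptolemy inequality for $(x,y,z,p)$ to the ordinary triangle inequality and dividing through. Your direct computation is in fact precisely what is needed to check that the paper's auxiliary space is Ptolemy (the step the paper flags with ``(Ptolemy!)''), so the two arguments are the same in substance.
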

\begin{proof}
Take any $d\in \cM$.
We can assume that $(X,d)$ is unbounded.
Let $o\in X$, $o\notin \Om(d)$. On $X$ we define a new bounded metric $d_o \in \cM$.  We let 
\begin{displaymath} 
d_o(x,x') \; := \; \frac{d(x,x')}{(d(x,o)+1)(d(x',o)+1)} \hspace{1cm} \forall x,x'\in X\setminus\{\infty\}.
\end{displaymath}
Note that this expression extends continuously to the point $\infty$ (in case that $\infty \in X$ exists), with
$d_o(\infty,\infty)=0$, and $d_o(x,\infty)= 1/(d(x,o)+1)$.
Then $d_o$ defines a bounded metric on $X$ which is M\"obius equivalent to the old one. In fact, one 
can see this construction isn the following way:
extend $(X,d)$
to a (Ptolemy!) extended metric space $(X\cup \{ \omega \}, \bar{d})$ with some
additional finite point $\omega$, 
where $\bar{d}|_{X\times X}:=d$, $\bar{d}(\omega ,\omega ):=0$
and $\bar{d}(\omega ,x) := d(o,x) +1$. Then apply an involution at $\omega$. 
The restriction
of this metric to $X$ yields the bounded metric $d_o$. 
\end{proof}

\subsection{Circles in Ptolemy spaces}

A circle in a Ptolemy space 
$(X,d)$ is a subset $\si \sub X$ hoemeomorphic
to $S^1$  such that for distinct
points
$x,y,z,w\in \si$ (in this order)
\begin{equation}\label{eq:PT_eq}
d(x,z)d(y,w)=d(x,y)d(z,w)+d(x,w)d(y,z)
\end{equation}
Here the phrase "in this order" means that
$y$ and
$w$ are in different components of
$\si \sm\{x,z\}$.
We recall that the classical
Ptolemy theorem states, that
four points 
$x,y,z,w$ of the euclidean
plane lie on a circle (in this order), if and
only if their distances satisfy
the Ptolemy equality
(\ref{eq:PT_eq}).
One can reformulate this via the crossratio triple.
A subset
$\si$ homeomorphic to
$S^1$ is a circle, if and only if
for all admissible
quadruples $(x,y,z,w)$ of point in
$\si$ we have
$\crt(x,y,z,w) \in \partial \Delta$.
This shows that the definition
of a circle is 
M\"obius invariant and hence a concept
of the M\"obius structure.
Let
$\si$ be a circle
and let
$\om \in \si$ and
consider
$\si_{\om}=\si\sm\{\om\}$
in a metric with remote point
$\om$,
then 
$\crt(x,y,z,\om) \in \partial \Delta$
says that for 
$x,y,z \in \si_{\om}$
(in this order)
$d(x,y)+d(yz)=d(x,z)$,
i.e. it implies that
$\si_{\om}$
is a geodesic,
actually a complete
geodesic isometric
to
$\R$.


We note that via the stereographic projection the extended Ptolemy space
$\mathbb{E}^n\cup\{\infty\}$ is M\"obius equivalent to $(S^n,d_0)$,
where $d_0$ is the chordal metric, i.e. the restriction
of the metric on $\mathbb{E}^{n+1}$.

\subsection{Uniqueness of Ptolemy circles} \label{subsec:uofptcirc}

In this section we consider a 3-point Ptolemy circle space $X$,
i.e. we assume that through any three given points in $X$ there exists a circle.
We show that in the case that $X$ is compact,
circles are uniquely determined by three distinct points:

\begin{theorem} \label{thm:unique_circle}
Let $X$ be a compact extended 3-point Ptolemy circle space,
then through three distinct points there exists a unique circle.
\end{theorem}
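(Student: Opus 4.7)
The plan is to show $\sigma = \sigma'$, for two Ptolemy circles $\sigma, \sigma'$ through $x_1, x_2, x_3$, by pinning down each of the three arcs that these points cut on such a circle via inversion.

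For each fixed $i \in \{1, 2, 3\}$, with $\{j, k\} = \{1, 2, 3\} \setminus \{i\}$, the first step is to pass to the inverted metric $d_{x_i} \in \cM$, in which $x_i$ is the remote point. By the characterization of circles under inversion recalled at the end of Section \ref{sec:moeb-geo}, both $\sigma \setminus \{x_i\}$ and $\sigma' \setminus \{x_i\}$ become complete geodesic lines isometric to $\R$ in the Ptolemy space $(X \setminus \{x_i\}, d_{x_i})$, each passing through $x_j$ and $x_k$. The arc of $\sigma$ between $x_j$ and $x_k$ not containing $x_i$ then coincides as a subset with the sub-arc of $\sigma \setminus \{x_i\}$ joining $x_j$ to $x_k$, which is a geodesic segment from $x_j$ to $x_k$ in the ambient metric $d_{x_i}$; the analogous statement holds for $\sigma'$.

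Invoking uniqueness of geodesic segments with fixed endpoints in Ptolemy metric spaces, these two sub-arcs coincide, and hence the arc of $\sigma$ from $x_j$ to $x_k$ not through $x_i$ equals the corresponding arc of $\sigma'$. Letting $i$ range over $\{1, 2, 3\}$ yields agreement of all three arcs of $\sigma$ cut by $x_1, x_2, x_3$ with the corresponding arcs of $\sigma'$. Since these three arcs cover $\sigma$ and $\sigma'$ respectively, one concludes $\sigma = \sigma'$.

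The main obstacle is the uniqueness of Ptolemy geodesic segments with fixed endpoints: the Ptolemy inequality applied directly to $x_j, x_k$ and two candidate geodesic points at matching arclength parameter yields only the upper bound $d_{x_i}(\sigma \setminus\{x_i\}(t),\, \sigma'\setminus\{x_i\}(t)) \le 2t(L-t)/L$ where $L = d_{x_i}(x_j,x_k)$, rather than equality. This is where the compactness of $X$ is expected to enter, by controlling the two geodesic lines $\sigma \setminus \{x_i\}$ and $\sigma' \setminus \{x_i\}$ at infinity in $d_{x_i}$ (both have $x_i$ as their common end when read back in the original metric) and enabling a bootstrap argument that leverages the isometric $\R$-parametrization of each line to promote the Ptolemy inequality bound to actual equality of the two segments.
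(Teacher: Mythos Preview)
Your overall architecture matches the paper's proof exactly: invert at each of the three points in turn, observe that the arc opposite the inversion point becomes a geodesic segment between the other two, and conclude that the three arcs are pinned down. The paper does precisely this.

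The gap is in the step you yourself flag as ``the main obstacle'': you never actually establish that geodesic segments between two given points in $(X\setminus\{x_i\},d_{x_i})$ are unique. Your final paragraph gestures at a bootstrap using the common end at $x_i$ and the Ptolemy bound $2t(L-t)/L$, but this is not a proof; in fact without further input the Ptolemy inequality alone does \emph{not} force geodesics to be unique (the paper emphasizes this by citing the construction from \cite{FLS} of non--uniquely-geodesic Ptolemy spaces). So something substantive is missing here, and your sketch does not supply it.

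The paper closes this gap by quoting a theorem from \cite{FLS}: a \emph{locally compact, geodesic} Ptolemy metric space is uniquely geodesic. To apply it you need two observations you do not make explicit. First, $(X\setminus\{x_i\},d_{x_i})$ is a geodesic space: for any two points $p,q\in X\setminus\{x_i\}$ the $3$-point circle hypothesis produces a Ptolemy circle through $p,q,x_i$, and after inversion this gives a geodesic from $p$ to $q$ (not merely the a priori existence of geodesics along $\sigma$ and $\sigma'$). Second, compactness of $(X,d)$ yields local compactness of $(X\setminus\{x_i\},d_{x_i})$, which is exactly how compactness enters; your speculation about controlling the two lines ``at infinity'' is not the mechanism the paper uses. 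Once you have these two ingredients and invoke the cited theorem, the rest of your argument goes through verbatim.
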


We prove this result by using suitable involutions.
Let $p \in X$ be an arbitrary point, then consider the M\"obius equivalent extended metric 
$d_p$ on $X$
(in $d_p$ the point $p$ is the infinitely remote point).
Now one easily sees that a Ptolemy circle through $p$ is in the metric
$d_p$ a geodesic. 
Thus we can reduce the uniqueness of circles to the uniqueness
of geodesics.

Note that since $(X,d)$ is a 3-point Ptolemy circle space,
$(X\sm \{ p\},d_p)$ is a geodesic spaces, i.e. through $x,y \in X\sm \{p\}$ there exists
a geodesic in $(X\sm \{ p\},d_p)$.
We apply the following Theorem
on geodesic Ptolemy metric spaces from \cite{FLS}.

\begin{theorem} [Theorem 1.2 in \cite{FLS}]
A locally compact, geodesic Ptolemy metric space is uniquely geodesic.
\end{theorem}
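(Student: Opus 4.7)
The plan is to derive unique geodesicity from convexity properties forced by Ptolemy's inequality, with local compactness supplying the analytic scaffolding (existence of midpoints, extraction of convergent subsequences) needed to close the argument.

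\textbf{Step 1: Convexity of distance to a fixed point.} I would first show that for any $p \in X$ and any geodesic $\gamma : [0,L] \to X$ parameterized by arclength, the function $\varphi_p(t) := d(p, \gamma(t))$ is convex. Given $s < t$ with midpoint $r = (s+t)/2$, apply Ptolemy's inequality to the quadruple $\{p, \gamma(s), \gamma(r), \gamma(t)\}$:
\begin{equation*}
d(p, \gamma(r))\, d(\gamma(s), \gamma(t)) \;\le\; d(p, \gamma(s))\, d(\gamma(r), \gamma(t)) \,+\, d(p, \gamma(t))\, d(\gamma(s), \gamma(r)).
\end{equation*}
The geodesic identities $d(\gamma(s), \gamma(r)) = d(\gamma(r), \gamma(t)) = (t-s)/2$ and $d(\gamma(s), \gamma(t)) = t - s$ reduce this to the midpoint inequality $\varphi_p(r) \le (\varphi_p(s) + \varphi_p(t))/2$, which extends to full convexity on $[0,L]$ by continuity of $d$.

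\textbf{Step 2: Uniqueness of midpoints.} Next I would show that any two midpoints $m_1, m_2$ of a pair $x, y \in X$ must coincide. Applying Ptolemy to the cyclic quadruple $\{x, m_1, y, m_2\}$ gives the direct bound $d(m_1, m_2) \le d(x, y)/2$. The heart of the argument---and the main obstacle---is to upgrade this non-strict bound to the equality $m_1 = m_2$. The strategy is iterative: form midpoints of the pairs $(x, m_1)$, $(x, m_2)$, $(m_1, y)$, $(m_2, y)$ (these exist by geodesicity), and use Step 1 together with Ptolemy applied to auxiliary quadruples of the resulting points to show that the ``midpoint error'' contracts by a definite factor under each such doubling step. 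Local compactness is crucial here: it guarantees that the iterated midpoint sequences have convergent subsequences whose limits realize the collapse $d(m_1, m_2) = 0$.

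\textbf{Step 3: From midpoint uniqueness to geodesic uniqueness.} Suppose $\gamma_1, \gamma_2 : [0,L] \to X$ are two geodesics from $x$ to $y$. By Step 2 applied to $(x, y)$, the midpoints $\gamma_1(L/2)$ and $\gamma_2(L/2)$ coincide. Restricting to the sub-geodesics on $[0, L/2]$ and $[L/2, L]$ and iterating, the two geodesics agree on a dense dyadic subset of $[0, L]$. Continuity of the arclength parameterizations then forces $\gamma_1 = \gamma_2$ on all of $[0, L]$, which is the desired unique geodesicity.
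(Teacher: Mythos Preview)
The theorem you are attempting to prove is not actually proved in this paper; it is quoted from \cite{FLS} and used as a black box in the proof of Theorem~\ref{thm:unique_circle}. So there is no proof in the present paper to compare against.

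That said, your proposal has a genuine gap in Step~2. Step~1 is correct and is even recorded in the paper (the section on convexity and Busemann functions). Step~3 is a standard reduction. But Step~2 is where the entire difficulty lives, and your outline does not resolve it. The Ptolemy bound $d(m_1,m_2)\le d(x,y)/2$ you derive is correct but carries no information: both $m_1$ and $m_2$ already sit at distance $d(x,y)/2$ from $x$ and from $y$, so this inequality is vacuous as a smallness statement for $d(m_1,m_2)$. Your proposed iteration---form midpoints of $(x,m_1)$, $(x,m_2)$, $(m_1,y)$, $(m_2,y)$ and show the error ``contracts by a definite factor''---is asserted, not demonstrated. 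If you try to carry it out using only distance convexity and Ptolemy on the obvious quadruples, the bounds you obtain on the new midpoint distances pick up additive terms of order $d(x,y)$ rather than a multiplicative contraction of $d(m_1,m_2)$.

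More tellingly, your account of how local compactness enters (``convergent subsequences whose limits realize the collapse $d(m_1,m_2)=0$'') is circular: extracting convergent subsequences is only useful once you already know the error tends to zero. The hypothesis is not decorative---the paper explicitly recalls from \cite{FLS} that every Ptolemy space embeds isometrically into a geodesic Ptolemy space, and exhibits one that is not uniquely geodesic (see Example~\ref{example-non-unique-circle} and the surrounding discussion). Hence any valid proof must invoke local compactness in a structurally essential way, not merely as a compactness crutch at the end. Your sketch does not identify that mechanism, and the actual argument in \cite{FLS} is more delicate than a direct midpoint-contraction scheme.
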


To prove Theorem \ref{thm:unique_circle}, consider three distinct points
$x, y, z \in X$. By taking the involution $d_z$ and the above result,
we see that the segment of the Ptolemy circle between $x$ and $y$ which does not contain $z$ is uniquely determined. By symmetry of the argument this is also true for the two other segments. This implies the uniqueness.

We point out that the assumption of compactness is crucial, 
as the following theorem from \cite{FLS} shows.

\begin{theorem} [Theorem 1.1 in \cite{FLS}]
Every Ptolemy metric space $(X,d)$ admits an isometric embedding into a complete, geodesic Ptolemy space $(\hat{X},\hat{d})$.
\end{theorem}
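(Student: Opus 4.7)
The plan is to proceed in two stages: first pass to the metric completion, then iteratively adjoin metric midpoints to produce a geodesic Ptolemy extension. For the first stage I would take the usual Cauchy completion of $X\sm\Om(d)$. Since the Ptolemy inequality is a closed condition in the four variables, it automatically passes to limits, and reinstating the remote point $\om$ if it was present yields a complete Ptolemy space. Thus without loss of generality I may assume $(X,d)$ is already complete.

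For the geodesic extension, for each pair $x,y\in X$ that does not already admit a metric midpoint I would adjoin a formal new symbol $m_{xy}$ and declare
$$d(m_{xy},z)\;:=\;\tfrac{1}{2}\bigl(d(x,z)+d(y,z)\bigr)$$
for every existing $z$, extending to distances between two new midpoints by the same averaging rule. This is consistent because computing $d(m_{xy},m_{x'y'})$ either by averaging the distances from $m_{xy}$ to $\{x',y'\}$ or from $m_{x'y'}$ to $\{x,y\}$ both yield $\tfrac{1}{4}$ of the four endpoint-to-endpoint distances. Setting $z=x$ or $z=y$ gives $d(m_{xy},x)=d(m_{xy},y)=d(x,y)/2$, so $m_{xy}$ is a genuine metric midpoint and the quadruple $(x,m_{xy},y,z)$ satisfies the Ptolemy equality for every $z$. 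The essential technical point is that the enlarged space remains Ptolemy: since $d(m_{xy},\cdot)$ is by construction the convex combination $\tfrac12 d(x,\cdot)+\tfrac12 d(y,\cdot)$, every Ptolemy inequality involving $m_{xy}$ is the arithmetic mean of the two Ptolemy inequalities in which $m_{xy}$ is replaced by $x$ and by $y$ respectively. For quadruples containing several new midpoints one iterates the averaging (e.g.\ for $(m_{xy},m_{x'y'},z_1,z_2)$ average the four original Ptolemy inequalities $(u,u',z_1,z_2)$ with $u\in\{x,y\}$, $u'\in\{x',y'\}$ and divide by four), and the triangle inequality, including its degenerate version involving the remote point, is verified by the same averaging.

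Iterating this midpoint adjunction transfinitely, with a metric completion taken at each limit ordinal, I obtain a complete Ptolemy space $(\hat X,\hat d)$ in which $X$ embeds isometrically and every pair of points admits a metric midpoint; a standard bisection-plus-completeness argument then produces a geodesic between any two points of $\hat X$. The main obstacle I anticipate is the transfinite bookkeeping: one must verify carefully that the Ptolemy inequality is preserved at every stage for all mixed quadruples containing arbitrarily many generations of adjoined midpoints, and that the process stabilizes in a single complete Ptolemy space of bounded cardinality. The reason the scheme succeeds is the linearity of the Ptolemy inequality in the three distances from a single point to the other three, which is precisely what makes it stable under the convex combinations of distance functions that define the adjoined midpoints.
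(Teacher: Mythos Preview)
The paper does not supply its own proof of this statement; it is quoted verbatim as Theorem~1.1 of \cite{FLS} and used as a black box. The only hint the paper gives about the construction appears in the example immediately following it, where it remarks that the method ``allows to add arbitrarily many geodesics between any two points such that all distance functions $d(p,\cdot)$ to points $p\in X$ are affine.'' Your midpoint formula $d(m_{xy},z)=\tfrac12(d(x,z)+d(y,z))$ is precisely the prescription that makes every $d(\cdot,z)$ affine on the segment being built, so your approach matches the construction alluded to here and, indeed, the one actually carried out in \cite{FLS}.

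Your averaging argument for preserving the Ptolemy inequality is correct: each of the three Ptolemy inequalities for a quadruple is linear in the triple of distances from any one fixed vertex to the other three, so replacing that vertex by a point whose distance function is a convex combination of two existing distance functions yields the convex combination of the two original inequalities; the case of several adjoined midpoints follows by iterated averaging exactly as you describe. The transfinite bookkeeping you flag is genuine but routine, and can alternatively be sidestepped by adjoining all formal dyadic convex combinations along each pair in one countable step and then completing once.
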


Thus, start with a metric space $(X,d)$ consisting of four points,
where five of the six distances are equal to one and the remaining distance is 
equal to two. This space is Ptolemy and by the above can be isometrically embedded
into a geodesic Ptolemy space. But this space is not
uniquely geodesic. \\

In general, there might exist different
Ptolemy circles through three given points, as the following example shows.

\begin{example} \label{example-non-unique-circle}
The construction method of $(\hat{X},\hat{d})$ as in the theorem cited above allows to add
arbitrarily many geodesics between any two points such that all distance functions $d(p,\cdot )$ to points $p\in X$
are affine. Now taking any three points in $\hat{X}$ then for any choice of three such geodesics connecting the given points to 
each other, their concatenation yields a Ptolemy circle. 
\end{example}

This observation can also be formulated as follows.

\begin{theorem} \label{theo-emb-3-pt-circle-space}
Every Ptolemy metric space $(X,d)$ admits an isometric embedding into a $3$-point Ptolemy circle space.
\end{theorem}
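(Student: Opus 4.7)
My plan is to apply Theorem 1.1 of \cite{FLS} iteratively. First I would use it to embed $(X,d)$ isometrically into a complete, geodesic Ptolemy space $(\hat X_0, \hat d_0)$. As already indicated in Example \ref{example-non-unique-circle}, the construction in \cite{FLS} is flexible enough to allow the insertion of arbitrarily many geodesic segments between any two prescribed points, in such a way that for every base point $p$ the distance function $d(p,\cdot)$ is affine along each newly added geodesic.

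Given three distinct points $x,y,z \in \hat X_0$, I would exploit this flexibility to insert three fresh geodesic segments $\ga_{xy}, \ga_{yz}, \ga_{zx}$ (if they are not already available in suitable disjoint position) and set $\si := \ga_{xy}\cup \ga_{yz}\cup \ga_{zx}$. This is a topological circle through $x,y,z$, and the assertion is that it is a Ptolemy circle. To verify this, take four points $p_1,p_2,p_3,p_4$ in cyclic order on $\si$. If they all lie on a single arc, the Ptolemy equality is the standard identity for four collinear points on a geodesic. If they are distributed across two or three arcs, one expands both sides of the Ptolemy equality using (i) the affine dependence of $d(x,\cdot), d(y,\cdot), d(z,\cdot)$ along the three arcs, (ii) the fact that each arc is a geodesic, and (iii) the freedom in defining distances between points on distinct new arcs that is built into \cite{FLS}; the identity then reduces to a short algebraic check involving only the side lengths $d(x,y), d(y,z), d(z,x)$ and the arc parameters of the $p_i$.

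To upgrade $\hat X_0$ to a $3$-point circle space, I would iterate: let $\hat X_{n+1}$ be obtained from $\hat X_n$ by adjoining, for every triple of points of $\hat X_n$ not yet on a common circle, a Ptolemy circle through them as constructed above (applying \cite{FLS} to $\hat X_n$ at each stage), and set $\hat X_\infty := \bigcup_n \hat X_n$. Any three points of $\hat X_\infty$ already lie in some $\hat X_n$ and therefore on a circle in $\hat X_{n+1} \sub \hat X_\infty$. The main obstacle is to check that the Ptolemy inequality survives this iteration globally — in particular, for quadruples whose points sit on geodesics added at different stages. This should follow from the same consistency principle that drives the completion in \cite{FLS}: the distances between newly inserted points are defined precisely so that Ptolemy inequality is propagated to all subsequent quadruples, and the affine distance property is hereditary under the iteration.
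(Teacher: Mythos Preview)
Your approach is essentially the same as the paper's: embed $(X,d)$ via \cite{FLS} into a geodesic Ptolemy space in which the added geodesics carry affine distance functions, and then observe that the concatenation of three such geodesics through any triple is a Ptolemy circle. The paper's treatment is in fact just Example~\ref{example-non-unique-circle} restated, with no more detail than that.

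The one substantive difference is your transfinite iteration $\hat X_0\subset \hat X_1\subset\cdots$, which is unnecessary and is the source of the difficulty you flag at the end. The space $\hat X$ produced by a single application of \cite{FLS} is already a \emph{geodesic} Ptolemy space; in particular, between any two points of $\hat X$ (not just points of the original $X$) there is a geodesic, and the construction in \cite{FLS} can be arranged so that \emph{all} distance functions $d(p,\cdot)$, $p\in\hat X$, are affine along these geodesics. Hence for any three points of $\hat X$ the three connecting geodesics are already present in $\hat X$ and their concatenation is a Ptolemy circle. No new points need to be adjoined, so the question of whether the Ptolemy inequality survives an infinite union never arises.

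A smaller point: in your verification that the concatenation is a Ptolemy circle you invoke ``the freedom in defining distances between points on distinct new arcs.'' There is no such freedom once $\hat X$ is built; those distances are determined by the construction. What actually drives the Ptolemy equality on the triangle $\sigma$ is precisely the affinity of $d(p,\cdot)$ along each side for \emph{every} $p\in\hat X$ (in particular for $p$ on another side). With that in hand, the case analysis you outline does reduce to a direct computation.
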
 


\subsection{Convexity and Busemann Functions} 
\label{sec:bus_convex}

Recall that a geodesic metric space $(X,d)$ is called {\em distance convex}, if all its distance functions to points $z\in X$
\begin{displaymath}
d(z,\cdot ): \mathbb{R}_0^+, \hspace{1cm} x \; \mapsto \; d(z,x) \ \forall x\in X
\end{displaymath}
are convex, i.e., that their restriction to any geodesic segment in $(X,d)$ is convex. \\
A geodesic Ptolemy metric space is distance convex, which follows immediately from the Ptolemy inequality applied to
points $z,x,m,$ and $y$, where $m$ is a midpoint of $x$ and $y$, i.e. $d(x,m)=\frac{1}{2}d(x,y)=d(m,y)$. \\

Let $c:[0,\infty) \to X$ be a geodesic ray parameterized by arclength.
As usual we define the {\em Busemann function}
$b_c(x) = \lim_{t\to\infty}(d(x,c(t))-t)$. \\
If $X$ is a geodesic Ptolemy space, then $b_c$ is convex, 
being the limit of the convex functions $d(c(t),\cdot) -t$. \\

For more information on Busemann functions in geodesic Ptolemy spaces we refer the reader to \cite{FS2} 

\subsection{Affine Functions and the Hitzelberger-Lytchak Theorem}

Let $X$ be a geodesic metric space.
For $x,y \in X$ we denote by
$m(x,y)=\{z\in X \, | \, d(x,z)=d(z,y)=\frac{1}{2}d(x,y)\}$ the set of midpoints of $x$ and $y$.
A map $f:X \to Y$ between two geodesic metric spaces is called
{\em affine}, if
for all $x,y \in X$,
we have
$f(m(x,y)) \subset m(f(x),f(y))$.
Thus a map is affine if and only if it maps
geodesics parameterized proportionally to arclength into 
geodesics parameterized proportionally to arclength.
An affine map $f:X \to \R$ is called an affine function. 

\begin{definition}
Let $X$ be a geodesic metric space. We say that affine functions on $X$ 
{\it separate points},
if for every $x,y\in X$, $x\neq y$, there exists an affine function $f:X\longrightarrow \mathbb{R}$
with $f(x)\neq f(y)$.
\end{definition}

The following beautiful rigidity theorem, which is due to Hitzelberger and Lytchak, is a main tool in our argument.

\begin{theorem} (\cite{HL}) \label{theo-HL}
Let $X$ be a geodesic metric space. If the affine functions on $X$ separate points, then $X$
is isometric to a convex subset of a (strictly convex) normed vector space.
\end{theorem}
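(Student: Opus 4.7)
The plan is to build a normed vector space $V$ out of the affine functions on $X$, and to verify that the natural evaluation map $\Phi:X\to V$ is an isometry onto a convex subset. The key observation is that affine functions on a geodesic metric space already encode the ``linear'' part of the geometry: each such $f$ pulls every geodesic back to an affine function $\R\to\R$, so $f$ has a well-defined slope $\Lip(f)\ge 0$ which vanishes precisely for constants. Because affine functions respect midpoints by definition, the embedding $\Phi$ will automatically send midpoints to midpoints and hence, by continuity together with the density of dyadic points on a geodesic, will send every geodesic of $X$ to a genuine line segment in $V$; this yields convexity of the image for free.

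First I would set up the target. Let $A(X)$ denote the real vector space of affine functions $X\to\R$, which contains the constants. Fix a basepoint $o\in X$, form $V:=(A(X)/\R)^*$, and define $\Phi(x)([f]):=f(x)-f(o)$, which is well defined modulo constants. The separation hypothesis makes $\Phi$ injective. Equip $V$ with the dual seminorm $\|\xi\|:=\sup\{\,|\xi([f])|:\Lip(f)\le 1\,\}$, which the separation assumption promotes to a norm on the linear span of $\Phi(X)-\Phi(o)$. The easy half $\|\Phi(x)-\Phi(y)\|\le d(x,y)$ is then immediate: for any affine $f$ with $\Lip(f)\le 1$ we have $|f(x)-f(y)|\le d(x,y)$.

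The main obstacle is the reverse inequality: for each $x\ne y$, one needs a $1$-Lipschitz affine function $f$ on $X$ realizing $f(x)-f(y)=d(x,y)$. On the geodesic segment from $x$ to $y$ such an $f$ is just signed arclength, and the crux is extending it to an affine function on all of $X$. This is a non-linear analogue of Hahn-Banach, and is where the separating hypothesis must do genuine work. The natural attack is a Zorn-type maximal extension argument in the class of partially defined $1$-Lipschitz affine functions, using the separation assumption to exclude the pathology of a maximal extension that fails to be defined at some point. Once extensions exist, strict convexity of the norm on $V$ should follow because a nontrivial segment on its unit sphere would, via $\Phi$, produce a pair of distinct midpoint configurations in $X$ that cannot be distinguished by any $1$-Lipschitz affine function, contradicting the separation assumption. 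I expect this affine-extension step to be the deepest ingredient; it is precisely the place where the hypothesis constrains the geometry strongly enough to force the conclusion.
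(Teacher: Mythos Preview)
Your framework matches the paper's variant argument (Section~\ref{subsec-3point-segment}) almost exactly: form the space of affine Lipschitz functions modulo constants, embed $X$ into its Banach dual via $x\mapsto E(x,o)$ with $E(x,o)([f])=f(x)-f(o)$, and observe that this map is $1$-Lipschitz and sends midpoints to midpoints, hence geodesics to line segments. That part is fine.

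The gap is your reverse inequality. You propose a Zorn-type extension in the class of ``partially defined $1$-Lipschitz affine functions'', but this class is not tractable: an affine function only makes sense on a geodesically convex domain, and adjoining a single new point $p$ forces you to adjoin every geodesic from $p$ into the old domain and to assign values consistently on that entire hull. There is no one-point-at-a-time step analogous to Hahn--Banach, and you have not explained how the separation hypothesis would rule out a maximal extension that fails to be total; separation only guarantees \emph{some} affine function distinguishing two given points, not that a prescribed partial one extends.

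The paper bypasses this by producing the needed $1$-Lipschitz affine function \emph{explicitly} rather than abstractly: for $x\ne y$ one takes a geodesic ray through $x$ and $y$ (guaranteed to exist in the situation at hand) and uses its Busemann function $b$. Then $\Lip(b)=1$ and $|b(x)-b(y)|=d(x,y)$, so once $b$ is shown to be affine (Propositions~\ref{pro:Busemann_convex_circle} and~\ref{pro:Busemann_convex_segment}) the reverse inequality
\[
\|E(x,y)\|\ \ge\ |E(x,y)([b])|\ =\ |b(x)-b(y)|\ =\ d(x,y)
\]
is immediate. Replacing your Zorn sketch with this Busemann-function construction closes the gap; strict convexity of the ambient norm is a separate (and easier) consequence once the isometric embedding is in place.
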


For a proof of a variant of this statement, cf. Section \ref{subsec-3point-segment}.

\subsection{Normed Vector spaces}

The other main ingredient when characterizing the $3$-point Ptolemy-circle and-segment spaces is
the following theorem due to Schoenberg.
\begin{theorem} (\cite{Sch}) \label{theo-Sch}
A normed vector space $(V,||\cdot ||)$ is a Ptolemy metric space if and only if it is Euclidean.
\end{theorem}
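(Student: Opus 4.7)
The plan is to prove the two implications separately.

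For the ``if'' direction (Euclidean implies Ptolemy), I would use the classical inversion argument. Given four points $x_1,x_2,x_3,x_4$, invert at $x_4$ via $\phi(x):=(x-x_4)/\|x-x_4\|^2$. Direct computation with the inner product yields
$$\|\phi(x)-\phi(y)\|=\frac{\|x-y\|}{\|x-x_4\|\,\|y-x_4\|}.$$
Dividing Ptolemy's inequality by $\|x_1-x_4\|\,\|x_2-x_4\|\,\|x_3-x_4\|$ then converts it into the ordinary triangle inequality for $\phi(x_1),\phi(x_2),\phi(x_3)$, which is automatic. (Alternatively, one may work directly with the identity $\|u-v\|^2=\|u\|^2+\|v\|^2-2\langle u,v\rangle$ and check Ptolemy by a short algebraic manipulation.)

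For the ``only if'' direction (Ptolemy implies Euclidean), the strategy is to invoke the Jordan--von Neumann theorem and reduce to verifying the parallelogram identity $\|a+b\|^2+\|a-b\|^2=2(\|a\|^2+\|b\|^2)$. Applying the Ptolemy inequality to the four vertices $(0,a,a+b,b)$ of a ``normed parallelogram'' immediately yields
$$\|a+b\|\cdot\|a-b\|\;\le\;\|a\|^2+\|b\|^2,$$
and the substitution $a\mapsto(a+b)/2$, $b\mapsto(a-b)/2$ produces the equivalent estimate $4\|a\|\|b\|\le\|a+b\|^2+\|a-b\|^2$. Both inequalities are saturated in a Hilbert space, so the task is to extract from them (together with other Ptolemy applications) the parallelogram equality.

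The main obstacle is precisely this upgrade from a one-sided inequality to an identity: a single application of Ptolemy never produces an exact equality, so some additional structure must be exploited. The natural reduction is to a two-dimensional subspace $V_2\subset V$, since the parallelogram law is a condition on pairs of vectors. In this setting, the plan is to argue that Ptolemy's inequality, combined with the central symmetry of the unit ball, is rigid enough to force the unit sphere of $V_2$ to be an ellipse. Concretely, I would follow Schoenberg's original strategy and apply the Ptolemy inequality to carefully chosen one-parameter families of configurations in $V_2$, then combine the resulting estimates with a Day-type characterization of inner-product norms to extract the parallelogram identity. Once the two-dimensional case is settled, the parallelogram law holds throughout $V$ and Jordan--von Neumann concludes the argument.
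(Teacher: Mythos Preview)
The paper does not actually prove this theorem: it is quoted as a black box from Schoenberg's 1952 paper \cite{Sch}, and later used (together with its Corollary~\ref{cor:sch}) as an off-the-shelf tool. So there is no ``paper's own proof'' to compare against.

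Evaluating your proposal on its own merits: the ``if'' direction is complete and correct; the inversion argument is the standard one and works in any inner-product space. The ``only if'' direction, however, is not a proof but a plan that ultimately defers to the reference itself. You correctly derive the one-sided product estimate $\|a+b\|\,\|a-b\|\le\|a\|^2+\|b\|^2$ from a single Ptolemy application, and you correctly identify the genuine obstacle: passing from this inequality to the parallelogram \emph{identity}. But then your resolution is ``follow Schoenberg's original strategy and apply the Ptolemy inequality to carefully chosen one-parameter families \ldots\ then combine with a Day-type characterization.'' That is a citation, not an argument. The actual content of Schoenberg's proof --- which specific configurations are used, which Day-type criterion is invoked, and how the inequalities combine to force equality --- is precisely what is missing here. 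As written, the hard direction has a gap exactly at the step you yourself flag as the main obstacle.
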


This, together with the fact that the Ptolemy condition is invariant under scaling, yields the 
\begin{corollary} \label{cor:sch}
An open subset of a normed vector space $(V,||\cdot ||)$ is a Ptolemy space, if and only if
$(V,||\cdot ||)$ is Euclidean.
\end{corollary}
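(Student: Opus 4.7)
The plan is to exploit the fact that the Ptolemy inequality is invariant under homothety. One direction is immediate: if $(V,\|\cdot\|)$ is Euclidean, then it is Ptolemy by Theorem \ref{theo-Sch}, and the Ptolemy inequality restricted to any subset still holds, so every open subset is Ptolemy.

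For the nontrivial direction, suppose $U\sub V$ is open and Ptolemy. I want to show that the Ptolemy inequality holds on all of $V$; once this is established, Theorem \ref{theo-Sch} immediately gives that $V$ is Euclidean. Let $x_1,x_2,x_3,x_4\in V$ be arbitrary. Fix a point $p_0\in U$ and a radius $r>0$ with $B_r(p_0)\sub U$. Set $M:=1+\max_i\|x_i\|$ and pick $\la\in(0,r/M)$. The map $\phi_{\la}:V\to V$ defined by $\phi_{\la}(x)=p_0+\la x$ is a homothety with ratio $\la$, so $\|\phi_{\la}(x_i)-\phi_{\la}(x_j)\|=\la\,\|x_i-x_j\|$ for all $i,j$, and by choice of $\la$ each image $\phi_{\la}(x_i)$ lies in $B_r(p_0)\sub U$.

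Applying the Ptolemy inequality (which holds in $U$ by assumption) to the quadruple $\phi_{\la}(x_1),\ldots,\phi_{\la}(x_4)$ and dividing the resulting inequality by $\la^2$ yields exactly the Ptolemy inequality for $x_1,\ldots,x_4$ in $V$. Since the quadruple was arbitrary, $(V,\|\cdot\|)$ is Ptolemy, and Theorem \ref{theo-Sch} concludes.

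There is really no main obstacle here; the only point that deserves care is verifying that the Ptolemy inequality itself (not merely its validity as a notion) is scale-invariant, which is transparent from the fact that each of the three terms $d(x_i,x_j)d(x_k,x_\ell)$ is homogeneous of degree two in the metric, so a common factor $\la^2$ cancels. This homogeneity is what makes the passage from an arbitrarily small open neighborhood to the whole normed space automatic.
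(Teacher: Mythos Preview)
Your argument is correct and is precisely the approach the paper has in mind: the paper only remarks that the corollary follows from Schoenberg's theorem ``together with the fact that the Ptolemy condition is invariant under scaling,'' and your homothety $\phi_\la$ makes this scaling argument explicit.
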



\section{Classification of Circles and Ptolemy segments}
\label{sec-charac-circles}

In this section we classify circles and segments in
Ptolemy spaces. We start with a classification up
to M\"obius equivalence.

\subsection{M\"obius classification of circles and segments}

We prove Theorem \ref{thm:moebch-circ} and its analogue
for Ptolemy segments.

\begin{theorem} \label{thm:moebch-segm}
Let $I$, $I'$ be Ptolemy segments.
Let $x_1,x_3$ be the boundary points of $I$ and
$x'_1,x'_3$ be the boundary points of $I'$.
Let in addition $x_2$ and $x'_2$ inner points
of $I$ and $I'$.
Then there exists a unique M\"obius homeomorphism
$\phi:I\to I'$ with $\phi(x_i)=x'_i$.
\end{theorem}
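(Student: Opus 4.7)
The plan is to mimic the proof of Theorem \ref{thm:moebch-circ}: invert at the endpoint $x_3$ to make it infinitely remote, use the Ptolemy equality for ordered quadruples to recognize $(I\setminus\{x_3\}, d_{x_3})$ as the ray $[0,\infty)$, and deduce existence and uniqueness of $\phi$ from the rigidity of rays under homotheties. Specifically, for three points $y_1, y_2, y_3 \in I \setminus \{x_3\}$ lying in this order towards $x_3$, the quadruple $(y_1, y_2, y_3, x_3)$ is ordered on $I$, so the Ptolemy equality applies; dividing through by $d(x_3,y_1)d(x_3,y_2)d(x_3,y_3)$ yields the geodesic additivity
\[
d_{x_3}(y_1,y_3) \;=\; d_{x_3}(y_1,y_2) + d_{x_3}(y_2,y_3).
\]
Combined with the observation that $d_{x_3}(x_1,y) \to \infty$ as $y \to x_3$ (from the inversion formula) and that $I \setminus \{x_3\}$ is homeomorphic to a half-open interval, this identifies $(I\setminus\{x_3\}, d_{x_3})$ isometrically with $[0,\infty)$, with $x_1$ at the origin. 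The analogous identification holds for $(I'\setminus\{x_3'\}, d'_{x_3'})$.

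Using these identifications $\psi: I\setminus\{x_3\} \to [0,\infty)$ and $\psi': I'\setminus\{x_3'\} \to [0,\infty)$, I would construct $\phi$ as the scaled transfer map. Set $\lambda := \psi'(x_2')/\psi(x_2) > 0$ and define $\phi(y) := (\psi')^{-1}\bigl(\lambda\,\psi(y)\bigr)$ for $y \in I\setminus\{x_3\}$, together with $\phi(x_3) := x_3'$. Then $\phi(x_i)=x_i'$ for $i=1,2,3$. Being a homothety between the inverted rays, $\phi$ preserves ratios of $d_{x_3}$-distances and hence all admissible cross ratios of finite quadruples in $I \setminus \{x_3\}$; cross ratios involving $x_3$ (and correspondingly $x_3'$) reduce to ratios of $d_{x_3}$-distances and are preserved as well. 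Therefore $\phi$ is M\"obius, and continuity is automatic since M\"obius maps are continuous.

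For uniqueness, any M\"obius homeomorphism $\phi:I\to I'$ with $\phi(x_1)=x_1'$ and $\phi(x_3)=x_3'$ descends, via the two inversions, to a M\"obius bijection between the two copies of $[0,\infty)$ sending remote point to remote point; by Lemma \ref{lem:homothety} it must be a homothety fixing the origin, and such a homothety is determined by its single scaling factor, which is pinned down by the condition $\phi(x_2)=x_2'$. The principal obstacle I foresee is the first step: producing the isometric identification of the inverted segment with all of $[0,\infty)$ requires both the geodesic-additivity calculation above and the $d_{x_3}$-unboundedness of $I \setminus \{x_3\}$. Once that identification is in hand, the rest of the argument is a routine rigidity computation on half-lines.
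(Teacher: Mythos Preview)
Your argument is correct. It differs, however, from the route the paper takes. The paper proves Theorem~\ref{thm:moebch-segm} in strict parallel to the circle case: it defines $\phi_I(t):=\crt(t,x_1,x_2,x_3)$, shows this maps $I$ homeomorphically onto the arc in $\partial\Delta$ from $\hat e_1$ through $\hat e_2$ to $\hat e_3$, and sets $\phi:=\phi_{I'}^{-1}\circ\phi_I$; uniqueness comes from the same computation as in the proof of Theorem~\ref{thm:moebch-circ}. You instead invert at the endpoint $x_3$, identify $(I\setminus\{x_3\},d_{x_3})$ with the ray $[0,\infty)$ (this is exactly the special case discussed at the start of Section~3.2), and then invoke Lemma~\ref{lem:homothety} to pin down $\phi$ as the unique homothety matching $x_2$ to $x_2'$. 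The two arguments are closely related---the first two homogeneous coordinates of $\crt(t,x_1,x_2,x_3)$ are proportional to $d_{x_3}(t,x_1)$ and $d_{x_3}(t,x_2)$---but yours trades the explicit $\partial\Delta$ parametrization for a cleaner appeal to the inversion machinery of Section~2. The paper's approach has the advantage of literally reusing the circle proof; yours has the advantage of making the uniqueness step a one-line citation of Lemma~\ref{lem:homothety} rather than a cross-ratio computation.
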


\begin{proof} (of Theorem \ref{thm:moebch-circ})
Define
$\phi_C:C\to \partial \Delta$ by
$\phi_C(t)=\crt(t,x_1,x_2,x_3)$.
Since $C$ is a Ptolemy circle the image of $\phi_C$
is actually in $\partial \Delta$.
The map is continuous and maps
$x_1$ to $\hat{e}_1=(0:1:1)$,
$x_2$ to $\hat{e}_2=(1:0:1)$ and
$x_3$ to $\hat{e}_3=(1:1:0)$.

One also easily checks that
$\phi^{-1}(\hat{e}_i)= x_i$.
Now $C\setminus \{x_1,x_2,x_3\}$ consists out
of three open segments
$I_1,I_2,I_3$, such that
$x_i$ and $x_j$ are boundary points of $I_k$,
and $x_k $ is not $I_k$ (here $\{i,j,k\}=\{1,2,3\}$).
Correspondingly 
$\partial\Delta \setminus \{\hat{e}_1, \hat{e}_2,\hat{e}_3\}$
consists of three open segments
$J_1,J_2,J_3$. 
Note that $J_i$ consists of all
$(a_1:a_2:a_3)\in \partial \Delta$, such that
$|a_i| > \max\{|a_j|,|a_k|\}$.
Since $\phi_C$ gives a bijection 
$x_i\leftrightarrow\hat{e}_i$, 
$\phi_C$ maps $I_i$ to $J_i$.
If $t \in I_i$, this implies that the equality in the triangle
inequality is written in the following way:
$$d(t,x_j) d(x_i,x_k) + d(t,x_k) d(x_i,x_j)  = d(t,x_i) d(x_j,x_k).$$

We now show that $\phi_C$ is injective.
Assume that $\phi_C(s)=\phi_C(t)$.
This implies that there exists
$\la > 0$ such that $d(s,x_i)=\la d(t,x_i)$ for 
$i=1,2,3$. In particular we have
\begin{equation} \label{eq:c1}
d(x_1,t)d(s,x_2)=d(x_1,s)d(t,x_2)
\end{equation}

We can also assume w.l.o.g. that
$d(t,x_3) \ge \max \{ d(t,x_1),d(t,x_2)\}$, and hence the same holds
for $s$. This implies that $s$ and $t$ are in the 
same component of
$C\setminus\{ x_1,x_2, x_3\}$ (namely the component $I_3$). 
Thus we have (eventually after permuting $t$ and $s$)
$$d(x_1,t)d(s,x_2)+d(t,s)d(x_1,x_2)=d(x_1,s)d(t,x_2),$$
which implies $d(t,s)=0$ because of (\ref{eq:c1}).

Since $C$ is homeomorphic to $S^1$ and 
$\phi_C:C\to \partial \Delta$ is injective and continuous,
it is also surjective and a homeomorphism.

Now the map
$\phi:C\to C'$, $\phi:=\phi_{C'}^{-1}\circ \phi_C$ is
a M\"obius homeomorphism and
maps $x_i$ to $x'_i$.
Assume on the other side that
$\psi:C\to C'$ is a M\"obius homeomorphism with $\psi(x_i)=(x'_i)$.
Then $\phi_C(t)=\crt(t,x_1,x_2,x_3)=\crt(\psi(t),x'_1,x'_2,x'_3)=\phi_{C'}(\psi(t))$,
which implies $\psi=\phi_{C'}^{-1}\circ \phi_C$.
\end{proof}

The proof of Theorem \ref{thm:moebch-segm} is completely analogous.
The map $\phi_C(t):=\crt(t,x_1,x_2,x_3)$ now maps
$I$ homeomorphically on the path in $\partial \Delta$, which
goes from $\hat{e}_1$ via $\hat{e}_2$ to $\hat{e}_3$.
Again $\phi=\phi_{C'}^{-1}\circ \phi_C$ is the required homeomorphism.

\subsection{Classification of Ptolemy segments up to isometry}

We now study Ptolemy segments and classify them up to
isometry.\\

We first consider the special case that one boundary point
of the segment is the point $\infty$. Let
$x\in X\setminus\{\infty\}$ be the other boundary point.
Let $x<s<t<\infty$ points on the segment (where the order is induced by
the homeomorphism of the segment to $[0,1]$).
Then the Ptolemy equality implies
$d(x,t)+d(t,s)=d(x,s)$. This says that
the segment is isometric to an interval and actually isometric
to $[0,\infty] \subset \R\cup\{\infty\}$. \\

We now consider a Ptolemy segment
$([0,1],d)$, with 
$R := d(0,1)$ a positive and finite number. Let
let $Q:= [0,\infty)\times [0,\infty) \subset \mathbb{R}^2$.
We define a map $\psi : [0,1]\to Q$ by $t\mapsto p_t=
\bigl( \begin{smallmatrix} a_t \\ b_t \end{smallmatrix} \bigr)$,
where $a_t= d(t,1)$ and $b_t=d(t,0)$.
Thus $p_t$ is a curve in $Q$ from
$e_R^1 =\bigl( \begin{smallmatrix} R \\ 0 \end{smallmatrix} \bigr)$
to
$e_R^2=\bigl( \begin{smallmatrix} 0 \\ R \end{smallmatrix} \bigr)$.

Note that the above Ptolemy condition applied to the quadruple
$0\le t_1\le t_2\le 1$ implies that
$d(t_1,t_2)= a_{t_1}b_{t_2}-b_{t_1}a_{t_2}=\langle Jp_1,p_2\rangle$,
where $\langle \, ,\,\rangle$ is the standard scalar product on $Q$ and $J$
the standard rotation $J \bigl( \begin{smallmatrix} a \\ b \end{smallmatrix} \bigr)
=\bigl( \begin{smallmatrix} -b \\ a \end{smallmatrix} \bigr)$.

Now for two points $p,q \in Q$ we have
$\langle Jp,q\rangle \ge 0$ iff $\arg(p) \le \arg(q)$.
As a consequence this implies that
$t\mapsto \arg(p_t)$ is a strictly increasing function.

This motivates that we consider for two points
$p,q \in Q$ the expression
$\langle Jp,q\rangle$ as a kind of "signed distance".
This "signed distance" is related to the Ptolemy equality, as
a trivial computation shows:

\begin{lemma} \label{lem:PT-condition}
For $p_1, p_2, p_3, p_4 \in  Q$ (actually for
arbitrary $p_1, p_2, p_3, p_4 \in \R^2$) we have
$$\langle Jp_1,p_2\rangle \langle Jp_3,p_4\rangle +
\langle Jp_2,p_3\rangle \langle Jp_1,p_4\rangle
= \langle Jp_1,p_3\rangle \langle Jp_2,p_4\rangle$$
\end{lemma}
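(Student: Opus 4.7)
The plan is to recognize the bilinear form $\langle Jp,q\rangle$ as a $2\times 2$ determinant and then reduce the identity to a standard Plücker-type relation for vectors in $\R^2$. Writing $p_i=(a_i,b_i)^T$ one has
$$\langle Jp_i,p_j\rangle \;=\; a_i b_j - a_j b_i \;=\; \det(p_i,p_j).$$
With the abbreviation $[ij]:=\det(p_i,p_j)$ the claim becomes $[12][34]+[23][14]=[13][24]$, i.e.
$$[12][34] \;-\; [13][24] \;+\; [14][23] \;=\; 0.$$

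The main step is then the vector identity in $\R^2$, valid for any three vectors $p_1,p_2,p_3$:
$$\det(p_1,p_2)\,p_3 \;-\; \det(p_1,p_3)\,p_2 \;+\; \det(p_2,p_3)\,p_1 \;=\; 0.$$
Viewed as a function of $(p_1,p_2,p_3)$, the left-hand side is trilinear and alternating, hence must vanish identically on the two-dimensional space $\R^2$; if one prefers an even more elementary justification, simply evaluate at the standard basis. Pairing this vector equation with the linear functional $\det(\,\cdot\,,p_4)$, and using the bilinearity of $\det$, one obtains exactly $[12][34]-[13][24]+[23][14]=0$, which rearranges to the claim.

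There is really no obstacle here; even the direct expansion route is painless, since each of $[12][34]$, $[23][14]$, $[13][24]$ unpacks into a sum of four monomials $\pm a_i a_j b_k b_l$ with $\{i,j,k,l\}=\{1,2,3,4\}$, and the cancellations match term by term. This is what the authors have in mind when they refer to a ``trivial computation''. The only conceptual remark worth recording is that the identity depends solely on $\dim \R^2 = 2$, i.e.\ that $J$ plays no role beyond producing the $2\times2$ determinant; for this reason the identity extends automatically from $Q$ to all of $\R^2$, as stated in the lemma.
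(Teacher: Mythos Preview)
Your proof is correct and matches the paper's approach: the paper offers no argument beyond the phrase ``a trivial computation shows'', and you have supplied exactly that computation, recognizing $\langle Jp_i,p_j\rangle=\det(p_i,p_j)$ and reducing the claim to the Pl\"ucker relation $[12][34]-[13][24]+[14][23]=0$ in $\R^2$. Your derivation via the vanishing alternating trilinear form is a clean way to see it and is in the spirit of what the authors intend.
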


\begin{remark}
Thus the expression
$R(p_1,p_2,p_3,p_4)= \langle Jp_1,p_2\rangle \langle Jp_3,p_4\rangle$
has the symmetries of a curvature tensor. The above Lemma corresponds to
the Bianchi identity. The other symmetries are obvious.
\end{remark}

The expression $\langle Jp,q\rangle$ does not satisfy the triangle inequality
and we have to study more precisely what conditions correspond to
the triangle inequality.
Let us therefore consider three points $u,v,w \in Q$ such that
$\arg(u) \le \arg(v) \le \arg(w)$. This implies
that $v= \la \, u + \mu \, w$, with $\la, \mu \ge 0$.
We say that $u,v,w$ {\em satisfies the triangle inequality},
if the following three inequalities hold:

\begin{enumerate}
\item $\langle Ju,v\rangle + \langle Jv,w\rangle \ge \langle Ju,w\rangle$
\item $\langle Jv,w\rangle + \langle Ju,w\rangle \ge \langle Ju,v\rangle$
\item $\langle Ju,w\rangle + \langle Ju,v\rangle \ge \langle Jv,w\rangle$
\end{enumerate}

Clearly for $0 \le t_1\le t_2 \le t_3\le 1$ the points
$p_{t_1},p_{t_2},p_{t_3}\in Q$ have to satisfy the triangle inequality.

An easy computation shows the following:

\begin{lemma} \label{lem:trianglecondition}
Let $u,v,w \in Q$ as above. Then the three triangle inequalities are respectively equivalent to:
$\la + \mu \ge 1$, $\la +1 \ge \mu$ and $1+\mu \ge \la$, i.e. that the three non-negative numbers
$\la ,\mu ,1$ satisfy the triangle inequality. 
\end{lemma}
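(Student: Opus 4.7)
The plan is to reduce everything to a direct calculation using the fact that the form $(p,q) \mapsto \langle Jp, q\rangle$ is bilinear and alternating, so in particular $\langle Jp, p\rangle = 0$ for every $p\in\R^2$. Setting $D := \langle Ju, w\rangle$, I will substitute $v = \la u + \mu w$ into the two signed distances that involve $v$ and let the self-paired terms collapse:
$$\langle Ju, v\rangle = \la\langle Ju,u\rangle + \mu\langle Ju,w\rangle = \mu D, \qquad \langle Jv, w\rangle = \la\langle Ju,w\rangle + \mu\langle Jw,w\rangle = \la D.$$
Because $u, w \in Q$ with $\arg(u)\le\arg(w)$, the scalar $D$ is non-negative.

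In the generic case $D > 0$, dividing each of the three triangle inequalities by $D$ immediately produces the stated conditions $\la + \mu \ge 1$, $\la + 1 \ge \mu$, and $1 + \mu \ge \la$ in precisely that order, which is exactly the assertion that the non-negative triple $(1, \la, \mu)$ satisfies the triangle inequality.

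The only point requiring a separate word is the degenerate case $D = 0$, which by the definition of $\langle J\cdot,\cdot\rangle$ and $Q$ happens exactly when $u$ and $w$ lie on a common ray from the origin. Then $v$ must lie on the same ray (by the ordering of arguments), all three signed distances $\langle Ju,v\rangle, \langle Jv,w\rangle, \langle Ju,w\rangle$ vanish, and all three triangle inequalities reduce to $0 \ge 0$; the decomposition $v = \la u + \mu w$ is no longer unique, but one can always choose $\la,\mu\ge 0$ so that $(1,\la,\mu)$ satisfies the triangle inequality, so the claimed equivalence still holds. I do not expect any real obstacle: the entire content of the lemma is the observation that, once $v$ is written as $\la u + \mu w$, the alternating form $\langle J\cdot,\cdot\rangle$ scales the three pairwise quantities by $\mu, \la, 1$ times the common factor $D$, reducing a geometric triangle inequality to the corresponding numerical one.
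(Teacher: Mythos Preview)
Your proof is correct and is precisely the ``easy computation'' the paper alludes to without writing out: substitute $v=\la u+\mu w$, use that $\langle J\cdot,\cdot\rangle$ is alternating to get $\langle Ju,v\rangle=\mu D$ and $\langle Jv,w\rangle=\la D$ with $D=\langle Ju,w\rangle$, and divide through by $D>0$. Your extra paragraph on the degenerate case $D=0$ is not needed here---the paper itself remarks afterwards that the argument of the lemma uses $\langle Ju,w\rangle\ne 0$ (this is why the endpoint case in the circle classification has to be treated separately)---but it does no harm.
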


Let $u,w\in Q$ with $\arg(u) < \arg(w)$. We define the region
$$T(u,w) =\{ (\la u +\mu w)\in Q  |\, 0\le \la,\, 
0\le \mu,\, \la +\mu \ge 1,\, \la +1\ge \mu,\, \mu +1 \ge \la \}.$$

Note that $T(u,w)$ is the convex region in $Q$ which is bounded
by the line segment $s\, u + (1-s)\, v$ , where $0\le s\le 1$ from $u$ to $w$, and the two parallel rays
$u + s\,(u+v)$ resp. $w + s\,(u+v)$ for $0\le s < \infty$.
These three affine segments correspond to the three triangle inequalities.
This can easily be verified,
since the equation $\la + \mu =1$ 
defines the line $\ell(u,w)$, 
the equation $ \la + 1= \mu$ the line
$\ell_w=\ell(w,w+(u+w))$ and $\mu +1 =\la$ the line
$\ell_u=\ell(u,u+(u+w))$. The last two lines are parallel,
The inequalities define corresponding halfspaces.
Here we denote for different
$p,q \in Q$ with $\ell(p,q)$ the affine line
determined by $p$ and $q$.

If we consider a Ptolemy interval, then the corresponding curve
$p_t$ satisfies: if $0\le t_1<t_2<t_3\le 1$ then
$p_{t_2}\in T(p_{t_1},p_{t_3})$.
In particular the whole curve is contained in the set
$T(e_R^1,e_R^2)$.

The first of the three inequalities (namely $\la + \mu \ge 1$) 
is easy to understand.
It just means that the curve $p_t$ is convex in the following sense.

\begin{definition}
We call a curve $p_t$ in $Q$ from 
$e_R^1$
to $e_R^2$ {\em convex}, if
$p_t$ is continuous,
$\arg(p_t)$ is strictly increasing and the bounded component
of $Q\setminus \{p_t | t\in [0,1]\}$ is convex.
\end{definition}

Surprisingly this condition together with the condition
that $p_t \in T(e_R^1,e_R^2)$ imply all other triangle conditions.

\begin{lemma} \label{prop:T-condition}
Let $p_t$ be a convex curve in $T(e_R^1,e_R^2)$ from
$e_R^1$ to $e_R^2$, then for $0\le t_1<t_2<t_3\le 1$ we have
$p_{t_2}\in T(p_{t_1},p_{t_3})$.
\end{lemma}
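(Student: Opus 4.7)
The plan is to verify, for the unique non-negative coefficients $\la,\mu$ with $p_{t_2}=\la p_{t_1}+\mu p_{t_3}$, the three triangle inequalities of Lemma~\ref{lem:trianglecondition}, namely $\la+\mu\ge 1$, $\la+1\ge\mu$, and $\mu+1\ge\la$. The non-negativity of $\la,\mu$ follows because $\arg p_{t_1}\le\arg p_{t_2}\le\arg p_{t_3}$ and all three points lie in $Q$.

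The inequality $\la+\mu\ge 1$ comes from the convexity of $B$ alone. Since $p_{t_1},p_{t_3}\in\partial B$, the chord $\overline{p_{t_1}p_{t_3}}$ lies in $\bar B$. The arc of the curve between $p_{t_1}$ and $p_{t_3}$ lies on $\partial B$ and therefore on the opposite side of this chord from the origin; in $(p_{t_1},p_{t_3})$-coordinates the chord has equation $\la+\mu=1$ while the origin satisfies $\la+\mu=0$, so $p_{t_2}$ must satisfy $\la+\mu\ge 1$.

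For the remaining two inequalities, by the symmetry $t_1\leftrightarrow t_3$, $\la\leftrightarrow\mu$ (reversing the orientation of the curve), it suffices to prove $\mu\le\la+1$. Geometrically this says that $p_{t_2}$ lies on the origin-side of the line $\ell_{p_{t_3}}$, which passes through $p_{t_3}$ and through the reflected point $-p_{t_1}$. Since $-p_{t_1}\in -Q$ is external to $\bar B\subset Q$, the desired inequality is equivalent to the statement that, viewed from $-p_{t_1}$, the angle $\angle(p_{t_2}+p_{t_1})$ is at most $\angle(p_{t_3}+p_{t_1})$. I would show from the hypothesis $p_t\in T(e_R^1,e_R^2)$, in particular the strip conditions $|y_t-x_t|\le R$ applied both at $p_{t_1}$ and along the curve, that the two lines from $-p_{t_1}$ to $e_R^1$ and from $-p_{t_1}$ to $e_R^2$ are supporting lines of $B$. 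Combined with the convexity of $B$, this identifies the whole curve $\{p_t:t\in[0,1]\}$ with the far-arc of $\partial B$ as seen from $-p_{t_1}$, so that the angular coordinate at $-p_{t_1}$ is monotonically non-decreasing in $t$, yielding the required inequality at $t=t_2<t_3$.

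The principal obstacle is establishing the supporting-line claim. A short direct computation reduces it to the inequality $\langle Jp_t,p_{t_1}\rangle\le R(y_t+y_1)$ (writing $p_t=(x_t,y_t)$ and $p_{t_1}=(x_1,y_1)$), which in turn follows from the two strip conditions by comparing slopes, exploiting the key structural feature that the strip-defining lines $y=x+R$ and $y=x-R$ themselves pass through $-e_R^1$ and $-e_R^2$ respectively. Without the hypothesis $p_t\in T(e_R^1,e_R^2)$, a convex curve ``bulging'' past one of the strip boundaries would produce an additional tangent line from $-p_{t_1}$ touching $\partial B$ in the interior of the curve, which would break the monotonicity of the angular coordinate and allow the inequality $\mu\le\la+1$ to fail. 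This is where the full strength of the $T$-assumption enters the argument.
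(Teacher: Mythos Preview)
Your approach is sound and genuinely different from the paper's. You reduce the two strip inequalities $\mu\le\la+1$ and $\la\le\mu+1$ to angular monotonicity of the curve as seen from the external points $-p_{t_1}$ and $-p_{t_3}$, respectively; the monotonicity is obtained by showing that the lines from $-p_{t_1}$ to $e_R^1$ and to $e_R^2$ are supporting lines of the convex region $B$, so that the entire curve is the \emph{far arc} of $\partial B$ from $-p_{t_1}$. Your stated inequality $\langle Jp_t,p_{t_1}\rangle\le R(y_t+y_1)$ indeed encodes the supporting condition for the line to $e_R^1$ (and follows from chaining $y_1(x_t-R)\le y_1 y_t\le (x_1+R)y_t$, using both strip bounds $|x-y|\le R$); the symmetric inequality $\langle Jp_{t_1},p_t\rangle\le R(x_t+x_1)$ is needed for the line to $e_R^2$ and should be mentioned explicitly, though it follows by the same argument after reflecting in the diagonal.

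The paper proceeds differently: it fixes $u=p_{t_1}$, $w=p_{t_3}$, considers the lines $\ell(e_R^1,u)$ and $\ell(e_R^2,w)$, and shows that their intersection point $z$ lies in the strip between $\ell_u$ and $\ell_w$ (hence in $T(u,w)$), so that by convexity $v=p_{t_2}\in\Delta(u,w,z)\subset T(u,w)$. Thus the paper certifies both strip inequalities at once via a single auxiliary point, while you certify them one at a time via two auxiliary viewpoints. Your argument is arguably more conceptual (a single monotonicity statement), but requires the extra step of identifying the curve as the far arc; the paper's argument is more hands-on but self-contained once $z$ is located. Both exploit the $T(e_R^1,e_R^2)$ hypothesis in the same essential way, namely through the two bounds $|x_t-y_t|\le R$.
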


\begin{proof}
Let $p_t$ be a convex curve in $T(e_1,e_2)$ from
$e_1$ to $e_2$.
We denote
$u=p_{t_1}, v=p_{t_2}, w=p_{t_3}$.
We have to show that
$v \in T(u,w)$.
Consider the lines 
$\ell(e_1,u)$ and $\ell(e_2,w)$.

There are two cases.
First assume that the lines are parallel
(and hence do not intersect).
In this case $u$ and $w$ lie on the two boundary rays of
$T(e_R^1,e_R^2)$ and one easily checks that
$T(u,w)$ is the unbounded component of $T(e_R^1,e_R^2) \setminus \ell(u,w)$.
Since $p_t$ is convex, this implies that $v \in T(u,w)$.

Thus we consider the second case that the two lines intersect in a point
$z$. Since $p_t$ is a convex curve containing $e_R^1, u,w,e_R^2$, we see that
$z$ is in the unbounded component of $T(e_R^1,e_R^2) \setminus \{p_t| t\in [0,1] \}$ and
$v \in \Delta(u,w,z)$, where $\Delta(u,w,z)$ is the corresponding triangle.

It remains to prove that $z \in T(u,w)$.
Let $\al = \arg(u+w)=\angle_o(e_R^1,u+w)$, where $o$ is the origin, and
$\angle$ the usual Euclidean angle.
Let $\beta=\angle_o(u+w,e_R^2)$. Thus $\al +\beta =\pi/2$.
We assume (without loss of generality) that $\al \ge \pi/4$.
Consider the two lines
$\ell_u=\ell(u,u+(u+w))$ and $\ell_w= \ell(w,w+(u+w))$, which are the lines containing 
the boundary rays of $T(u,w)$.
Then $\ell_u$ intersect the boundary of $Q$ in a point
$q_1=\bigl( \begin{smallmatrix} a \\ 0 \end{smallmatrix} \bigr)$
and $\ell_w$ intersects the boundary of $Q$ in
$q_2=\bigl( \begin{smallmatrix} 0 \\ b \end{smallmatrix} \bigr)$.
Since $\al \ge \pi/4$, we have $b \le a$.
Since $\angle_{q_2}(w,2q_2) = \beta \le \pi/4$ and
$\angle_{e_R^2}(w,2e_R^2) \ge \pi/4$ (since $w \in T(e_R^1,e_R^2)$),
we see that $b\le 1$ and hence also $a \le 1$.
This implies that $z =\ell(e_R^1,u)\cap \ell(e_R^2,w)$ is in the strip
bounded by $\ell_u$ and $\ell_w$ and hence in $T(u,w)$.
\end{proof}

Collecting all results we can now state:
\begin{proposition} \label{prop-segment}
The isometry classes of Ptolemy intervals
$([0,1],d)$ with $d(0,1)=R$ stay in
$1-1$ relation to the convex curves in $T(e_R^1,e_R^2)$ from $e_R^1$ to $e_R^2$
modulo reflection at the bisecting line in $Q$.
\end{proposition}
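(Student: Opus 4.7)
The plan is to show that the map $\psi:d\mapsto p_t=\bigl(d(t,1),d(t,0)\bigr)$ already introduced is a bijection between isometry classes of Ptolemy intervals on $[0,1]$ with $d(0,1)=R$ and convex curves from $e_R^1$ to $e_R^2$ in $T(e_R^1,e_R^2)$ modulo the stated reflection. Nearly all the analytic content is already packaged in Lemmas \ref{lem:PT-condition}, \ref{lem:trianglecondition}, and \ref{prop:T-condition}; what remains is bookkeeping and matching of equivalence relations.

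First I would verify that $\psi$ lands in the asserted target. Given $([0,1],d)$, the curve $p_t$ is continuous, joins $e_R^1$ to $e_R^2$, and has strictly increasing argument by the observation immediately preceding Lemma \ref{lem:PT-condition}. Applying Lemma \ref{lem:trianglecondition} to the triple $(p_0,p_t,p_1)$ forces $p_t\in T(e_R^1,e_R^2)$. Convexity of the bounded component of $Q$ cut off by the curve amounts to the first of the three triangle inequalities, $\la+\mu\ge 1$, for every triple $p_{t_1},p_{t_2},p_{t_3}$ with $t_1<t_2<t_3$: this inequality places $p_{t_2}$ on the side of the chord $\overline{p_{t_1}p_{t_3}}$ away from the origin, so every chord is separated from the origin by the curve and the enclosed region is convex.

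Next I would construct the inverse. Given a convex curve $\gamma$ in $T(e_R^1,e_R^2)$ from $e_R^1$ to $e_R^2$, parametrize it so that $\arg\circ\gamma$ is strictly increasing (unique up to an increasing self-homeomorphism of $[0,1]$) and set $d(s,t):=|\langle J\gamma(s),\gamma(t)\rangle|$. Positivity off the diagonal follows from strict arg-monotonicity, symmetry is automatic, the triangle inequality is Lemma \ref{prop:T-condition} read through Lemma \ref{lem:trianglecondition}, and Ptolemy equality for ordered quadruples is Lemma \ref{lem:PT-condition}. Since $d(0,1)=\langle Je_R^1,e_R^2\rangle=R$ and the continuity of $\gamma$ recovers the standard topology on $[0,1]$, this produces a Ptolemy interval whose associated curve is $\gamma$; different admissible parametrizations yield isometric Ptolemy intervals.

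Finally I would match the equivalence relations. An isometry between Ptolemy intervals on $[0,1]$ is a homeomorphism of the standard interval, hence either the identity or $t\mapsto 1-t$. The latter replaces $p_t=(a_t,b_t)$ by $(b_{1-t},a_{1-t})$, which is exactly the reflection of the original curve across the bisector $\{a=b\}$, and no further identifications arise. I expect the most delicate point to be the geometric reformulation in the first step, namely the passage from the algebraic inequality $\la+\mu\ge 1$ to the global convexity of the enclosed region in $Q$; once this equivalence is in place, the remainder of the argument is a direct assembly of the preceding lemmas.
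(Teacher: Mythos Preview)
Your argument is correct and is essentially the paper's own proof, with Lemmas~\ref{lem:PT-condition}--\ref{prop:T-condition} doing exactly the same work in both directions. Two minor slips to clean up: a homeomorphism of $[0,1]$ is merely monotone, not literally the identity or $t\mapsto 1-t$---what you actually need (and what follows from your formula for $p_t$) is that an increasing isometry reparametrises the same image curve while a decreasing one reflects it across $\{a=b\}$; and $\langle Je_R^1,e_R^2\rangle=R^2$, so the reconstructed metric should carry a factor $1/R$, a normalisation the paper itself glosses over.
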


Indeed given such a Ptolemy interval, we obtain such a convex curve.
If we have otherwise given such a convex curve
$\psi:[0,1] \to Q$, $\psi(t)= p_t=
\bigl( \begin{smallmatrix} a_t \\ b_t \end{smallmatrix} \bigr)$,
then for given $s \le  t \in [0,1]$ we have
$d(0,s)=b_s$, $d(1,s)=a_s$, $d(0,t)=b_t$, $d(1,t)=a_t$.
Now $d(t,s)$ is determined by the Ptolemy equality:
$$d(s,t)d(0,1)+d(0,s)d(t,1)=d(0,t)d(s,1).$$
Thus the curve determines the isometry class completely. \\
Note that two Ptolemy segments are isometric to each other if and only if
their Ptolemy parameterizations as above either coincide or if they are
obtained from one another by reflection at the bisecting line in $Q$. \\

\begin{remark} \label{rem-segments}
\begin{description}
\item [(i)] On Ptolemy intervals which can be realized in $\mathbb{E}^2$. \\
Let $p,q\in\mathbb{E}^2$ be two points of distance
$|pq|=R$.
Then for $r= \frac{R}{2}$
there is exactly one Ptolemy segment (namely a halfcircle of radius r) in $\mathbb{E}^2$ connecting two points of distance $R$ up to isometry, whereas
for $r>\frac{R}{2}$ there are exactly two such segments. These segments are precisely the 
Ptolemy segments which can be isometrically 
embedded in $\mathbb{E}^2$
(this follows essentially from the classical characterization of
circles by the Ptolemy equality). We claim now, that their images in $Q$ are the intersections of $Q$ with the ellipses
in $\mathbb{R}^2$ through $e_R^1$ and $e_R^2$ with axis along  
$\operatorname{span}\{ e_R^1+e_R^2 \}$ and $\operatorname{span}\{ e_R^1-e_R^2\}$. Indeed, for $R>0$, $r\ge \frac{R}{2}$ 
consider a circle in $\mathbb{E}^2$ with radius $r$ connecting two points $p$ and $q$ in distance $|pq|=R$ of each other.
Seen from the circle`s origin, the points $p$ and $q$ enclose an angle $\alpha$ with $\sin \frac{\alpha}{2}=\frac{R}{2r}$.
In each point $m$ on the circle segment considered, let $\beta$ denote the angle of $p$ and $q$ at $m$.
This angle $\beta =[\pi -\frac{\alpha}{2}]$ is constant along the segment and using the law 
of cosine in $\mathbb{E}^2$ the image of this circle segment
in $Q$ is given in the coordinates $a$ and $b$ of $Q$ through $R^2=a^2+b^2-2ab \cos \beta$.
Since the last equation describes the ball of radius $R$ around the origin w.r.t. the scalar product on $\mathbb{R}^2$ with $<e_R^1,e_R^1>=1=<e_R^2,e_R^2>$
and $<e_R^1,e_R^2>=-\cos \beta$, it determines an ellipse as claimed. \\
This remark shows that the possible isometry classes
of Ptolemy segments are much richer than those of circles which can be
realized in the Euclidean plane. 
\item[(ii)] The Ptolemy parameterization. \\
Given a Ptolemy segment $(I,d)$, there exists a unique {\rm Ptolemy parameterization}
$\mathfrak{pt}:[0,\frac{\pi}{2}]\longrightarrow Q$ as above, parameterizing the segment
by the angle $\alpha \in [0,\frac{\pi}{2}]$ its image point in $Q$ encloses with $e_1^R$. \\
Consider two Ptolemy segments $I_i$, $i=1,2$, and denote their Ptolemy parameterizations by
$\mathfrak{pt}_i$, $i=1,2$. Then the map $\varphi: I_i\longrightarrow I_2$ satisfying
$\alpha (\varphi (x))=\alpha (x)$ for all $x\in I_1$ is a M\"obius map. 
\end{description}
\end{remark}

\begin{figure}[htbp]
\centering
\includegraphics[width=0.9\columnwidth]{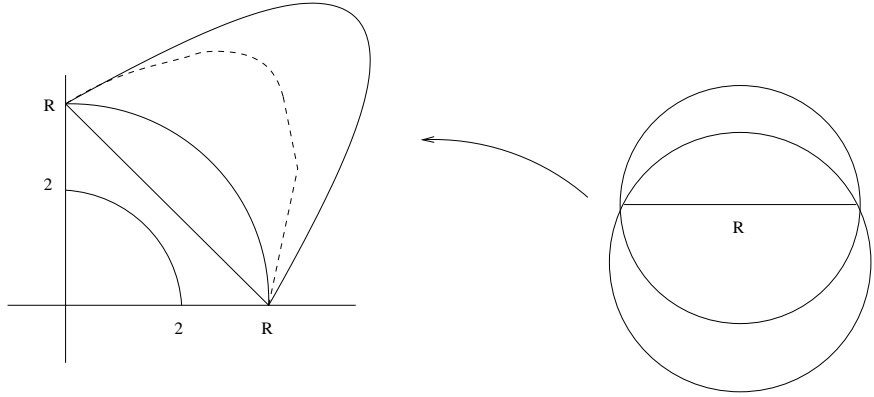}
\caption{The figure shows a variety of Ptolemy parameterizations of different Ptolemy segments. The dashed curv
on the left hand side corredponds to a non Euclidean configuration. The others are pieces of ellipses as described
in Remark \ref{rem-segments}. Their corresponding Euclidean configurations are shown on the right hand side.}
\end{figure}

\subsection{Classification of Ptolemy circles up to isometry}

We now study Ptolemy circles.
Let $S^1=\{ e^{\pi it}\in \C\,|\,0\le t \le 2\}$.
We will assume that $d(-1,1)=R$. Now $S^1$ consists of
two "segments" from $1$ to $-1$.
Let $H= [0,\infty)\times \R$ be the upper halfspace.
We now define a map 
$\varphi : [0,2] \to H$, by 
$t \mapsto p_t=
\bigl( \begin{smallmatrix} a_t \\ b_t \end{smallmatrix} \bigr)$,
where $b_t=d(e^{\pi it},1)$ and
$a_t = d(e^{\pi it},-1)$ for $0\le t \le 1$ and
$a_t = - d(e^{\pi it},-1)$ for $1\le t \le 2$. 

Let $(S^1,d)$ be a Ptolemy circle and
let $0 \le t_1<t_2\le 2$. Then the Ptolemy condition for the circle
applied to the three possible cases
$0\le t_1<t_2 \le 1$, $0\le t_1 \le 1\le t_2$ and
$0 < 1\le t_1\le t_2$ always gives the distance
$d(t_1,t_2) = \langle J p_{t_1},p_{t_2}\rangle$.

This implies that $\arg (p_t)$ is  strictly increasing with $t$.
The discussion with the triangle inequality is similar as in the case
of a Ptolemy segment. 
However, note that in the proof of Lemma \ref{lem:trianglecondition} we used
the fact that $\langle Ju,v\rangle \neq 0$. Thus the argument does not work
for $u=e_R^1$ and $w=-e_R^2$. We can, however, say the following:
if $0 < t_1<t_2<t_3 < 2$ are three points in the open interval $(0,2)$, then
the triangle condition is equivalent to
$p_{t_2} \in T(p_{t_1},p_{t_3})$ as above. 
The same is true if $t_1=0$ and $t_3<2$ (resp. $0<t_1$ and $t_3=2$).

We want to understand the limit case $t_1= 0, t_3=2$.

Therefore we define for a unit vector $x\in S^1$ with $0\le \arg(x) \le \pi$
the sector
$T_x(e_R^1,-e_R^1) := \{ (s e_R^1 + t x) \,|\; -1\le s\le1, 0\le t<\infty \}$.

The analogon of Proposition \ref{prop-segment} for Ptolemy circles now reads as follows.

\begin{proposition} \label{prop-circle} 
The Ptolemy circles $(S^1,d)$ with $d(1,-1)=R$ are in $1-1$
relation to the convex curves $p_t$ in $H$ from $e_R^1$ via $e_R^2$ to
$-e_R^1$ which are contained in $T_x(e_R^1,-e_R^1)$ for
some $x \in S^1$ with $\pi/2 \le x \le 3\pi/2$.
\end{proposition}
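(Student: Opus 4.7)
The plan is to reduce Proposition \ref{prop-circle} to Proposition \ref{prop-segment} by cutting $S^1$ at the point $1$. Given a Ptolemy circle $(S^1,d)$ with $d(1,-1)=R$, I would first set up the curve $p_t = \varphi(t)$ as in the paragraph preceding the statement. The distance formula $d(t_1,t_2) = \langle Jp_{t_1}, p_{t_2}\rangle$ (with the sign convention $a_t\le 0$ for $t\ge 1$) is already established there via a case analysis of the Ptolemy equality, so I invoke it directly. Positivity of $d$ on distinct points together with this identity force $\arg(p_t)$ to be strictly increasing from $0$ to $\pi$ on $[0,2]$, giving a continuous injection of $(0,2)$ into the open upper half-plane.

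The two halves $[0,1]$ and $[1,2]$ are each Ptolemy segments, so Proposition \ref{prop-segment} applies to each: the restriction of $p_t$ to $[0,1]$ is a convex curve in $T(e_R^1,e_R^2)$, and the restriction to $[1,2]$ is a convex curve in $T(e_R^2,-e_R^1)$, joining the indicated endpoints. Concatenating, we obtain a convex curve in $H$ from $e_R^1$ via $e_R^2$ to $-e_R^1$, whose bounded complementary component in $H$ is convex.

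The principal obstacle is to establish the extra condition $p_t\in T_x(e_R^1,-e_R^1)$ for some suitable direction $x$. As the excerpt notes, Lemma \ref{lem:trianglecondition} fails at the antipodal pair $u=e_R^1$, $w=-e_R^1$, so this condition cannot be read off from a single triangle inequality. I would deduce it by a limiting argument: for every $0<s<s'<2$, Lemmas \ref{lem:trianglecondition} and \ref{prop:T-condition} give $p_t\in T(p_s,p_{s'})$ for all $s\le t\le s'$. As $s\to 0$ and $s'\to 2$, the bases of these sectors shrink to the chord $[-e_R^1,e_R^1]$, while the directions of their bounding rays (namely $p_s+p_{s'}$) lie in a compact subset of the closed upper half-plane, since the curve is convex and passes through $e_R^2$. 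Extracting a convergent subsequence of directions produces a unit vector $x$ and a limiting sector $T_x(e_R^1,-e_R^1)$ containing the whole curve; the admissible range of $\arg(x)$ recorded in the proposition is exactly the range compatible with the constraint that $e_R^2$ lie inside the sector.

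Conversely, given a convex curve $p_t$ as in the statement, I would define $d$ on $S^1$ by $d(t_1,t_2):=|\langle Jp_{t_1}, p_{t_2}\rangle|$, identifying $t=0$ with $t=2$. Symmetry and non-negativity are immediate, strict positivity on distinct points follows from strict monotonicity of $\arg(p_t)$, the Ptolemy equality for cyclically ordered quadruples is exactly Lemma \ref{lem:PT-condition}, and the triangle inequality for interior triples reduces to the $T$-condition of Proposition \ref{prop-segment}; triples approaching the antipodal extreme are handled by the sector condition $T_x(e_R^1,-e_R^1)$ obtained above. The two constructions are visibly inverse, giving the claimed $1$--$1$ correspondence.
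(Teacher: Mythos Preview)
Your forward direction is close to the paper's, and the limiting argument you sketch for the sector condition $T_x(e_R^1,-e_R^1)$ is essentially what the paper does (``for $t_1=0$ and $t_3\to 2$ a limit condition''). There is a small gap: concatenating two convex arcs does not automatically yield a convex curve---you need to rule out a concave kink at $e_R^2$. This follows from the triangle inequality for triples $t_1<1<t_3$, which you have available since you start from a genuine metric, but you should say so. The paper sidesteps this by using the triangle inequality globally from the outset (getting $p_{t_2}\in T(p_{t_1},p_{t_3})$ for all $0\le t_1<t_2<t_3<2$), rather than splitting into two segments.

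The real problem is in your converse direction. You write that the triangle inequality for interior triples ``reduces to the $T$-condition of Proposition~\ref{prop-segment}'', but Lemma~\ref{prop:T-condition} (which is what does the work in that proposition) applies only to convex curves lying in $T(e_R^1,e_R^2)\subset Q$. As soon as $t_1<1<t_3$, the points $p_{t_1}$ and $p_{t_3}$ sit in different quadrants of $H$, and no appeal to the segment case is available. The paper does \emph{not} reduce to the segment case here; it reruns the geometric argument of Lemma~\ref{prop:T-condition} in the half-plane $H$, with the lines $\ell(e_R^1,u)$ and $\ell(-e_R^1,w)$ replacing $\ell(e_R^1,u)$ and $\ell(e_R^2,w)$. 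The crucial step is that the parallel boundary lines $\ell_u,\ell_w$ of $T(u,w)$ meet $\partial H$ at $\pm\lambda e_1$, and the hypothesis $u,w\in T_x(e_R^1,-e_R^1)$ forces $\lambda\le 1$, whence the intersection point $z$ lies in the strip and hence in $T(u,w)$. So the sector condition is not merely a limiting statement to be ``handled'' for triples near the antipodal extreme; it is the replacement for the containment in $T(e_R^1,e_R^2)$, used for \emph{every} triple, and is what makes the whole converse go through. Your sketch misses this role of $T_x$ and leaves the triangle inequality for cross-quadrant triples unproven.
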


\begin{proof}
The condition that (for $0\le t_1<t_2<t_3< 2$)
$p_{t_2} \in T(p_{t_1},p_{t_3})$ implies
that the curve $p_t$ convex, i.e.
the bounded component of $H\setminus \{p_t | \,0\le t\le 2\}$
is convex. The condition also implies (for $t_1=0$ and $t_3\to 2$ a limit condition
that $p_{t_2} \in T_x(e_1,-e_1)$ for some $x \in S^1$. Since
$e_2 = p_{1}$ we have $\pi/2 \le x \le 3\pi/2$.

Conversely let us assume that 
$p_t=\bigl( \begin{smallmatrix} a_t \\ b_t \end{smallmatrix} \bigr)$, 
$0\le t\le 2$ is a convex curve in $H$ from $p_0=e_1$ via 
$p_1=e_R^2$ to
$p_2=-e_R^1$ which is contained in $T_x(e_R^1,-e_R^1)$ for
some $x \in S^1$ with $\pi/2 \le x \le 3\pi/2$.
For $0\le t_1\le t_2\le 2$ define 
$d(e^{\pi it_1},e^{\pi i t_2})=\langle Jp_{t_1},p_{t_2}\rangle$.
By Lemma \ref{lem:PT-condition} $(S^1,d)$ satisfies the Ptolemy condition.
We have to show that $d$ is indeed a metric, hence that for 
$0\le t_1<t_2<t_3< 2$ we have
$p_{t_2} \in T(p_{t_1},p_{t_3})$.

The proof is similar to the proof of
Proposition \ref{prop:T-condition}

We write
$u=p_{t_1}, v=p_{t_2}, w=p_{t_3}$
and have to show that
$v \in T(u,w)$.
Consider the lines  
$\ell(e_R^1,u)$ and $\ell(-e_R^1,w)$.

If these lines are parallel, then they are the boundary rays of
$T_x(e_R^1,-e_R1)$ and $T(u,w)$ is the closure of the unbounded component of
$T_x(e_R^1,-e_R^1) \setminus \ell(u,w)$.

If the lines are not parallel, let $z =\ell(e_R^1,u)\cap \ell(-e_R^1,v)$ be the intersection point
and we have to show $z\in T(u,w)$.

Consider the parallel lines
$\ell_u=\ell(u,u+(u+w))$ and $\ell_w= \ell(w,w+(u+w))$, which are the lines containing 
the boundary rays of $T(u,w)$. These lines intersect the boundary of $H$ in points
$-\la e_1$ and $\la e_1$ for some $0\le \la$.
The condition that $u,w \in T_x(e_R^1,-e_R^1)$ implies
$\la \le 1$. This implies that $z$ is in the strip bounded by
$\ell_u$ and $\ell_w$.
\end{proof}


\begin{remark} \label{rem-circles}
\begin{description}
\item[(i)] On the Ptolemy circles which can be realized in $\mathbb{E}^2$. \\
Now we can have a similar discussion of the variety of Ptolemy circles as the one in Remark \ref{rem-segments} (i).
for Ptolemy segments. 
Once again, the Ptolemy circles which admit isometric embeddings in $\mathbb{E}^2$ are precisely given by the intersections
of the upper halfplane $H$ with the ellipses in $\mathbb{R}^2$ considered above. 
\item[(ii)] The Ptolemy parameterization. \\
The analogon of Remark \ref{rem-segments} (ii) also applies to Ptolemy circles once one 
fixes additional two points on the circle, fixing its Ptolemy parameterization. \\
Note that in contrast to the situation for Ptolemy segments, Ptolemy circles do not
admit the pair of an initial- and an endpoint. Our characterization of isometry classes
therefore requires the additional choice of two distinct points on the circle. It therefore is a 
characterization of two-pointed Ptolemy circles.
\end{description}
\end{remark}



\section{Ptolemy spaces with many circles}
\label{sec-many circles}

In this section we characterize Ptolemy spaces with many circles or
many segments. For simplicity we call a metric space a
$k$-point Ptolemy-circle (resp. Ptolemy-segment) space,
if any $k$ points are contained in a Ptolemy ccircle (respectively Ptolemy segment).

\subsection{$4$-Point Ptolemy-Circle and Segment Spaces}

\label{subsec-4point}

In this section we characterize all $4$-point Ptolemy circle/segment spaces up to M\"obius equivalence. 

We assume first, that $X$ is a space such that for any allowed
quadruple $(x_1,x_2,x_3,x_4)$ we have
$\crt(x_1,x_2,x_3,x_4) \in \partial \Delta$, i.e. for all quadruples
of points the Ptolemy equality holds.
We choose some point $p \in X$ and consider the
metric space $(X\sm\{ p\},d_p)$.
The Ptolemy equality implies now that for a triple of points in $X\sm\{ p\}$ 
we have equality in the triangle inequality.

Now we use the following elementary characterization of subsets of the real line,
which we leave as an exercise.

\begin{lemma} \label{lem:triangle}
A metric is isometric to a subset of the real line $\R$ if and only if
for any triple of points we have equality in the triangle inequality.
\end{lemma}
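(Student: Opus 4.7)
First I would observe that the forward direction is immediate: for any three reals $a\le b\le c$ we have $|a-c|=|a-b|+|b-c|$, and this equality is preserved by any isometry, so a subset of $\R$ inherits the property.

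For the converse, assume every triple in $(X,d)$ satisfies some equality in the triangle inequality. The cases $|X|\le 2$ are trivial, so I fix distinct basepoints $x_0,x_1\in X$, write $R:=d(x_0,x_1)>0$, and plan to construct a signed distance function $f\colon X\to\R$ with $f(x_0)=0$ and $f(x_1)=R$ such that $|f(x)-f(y)|=d(x,y)$ for all $x,y\in X$. For each $x\in X$, I would apply the hypothesis to the triple $\{x_0,x_1,x\}$: the triple has some equality, and I declare $x$ to be on the \emph{negative} side if $x_0$ lies strictly between $x_1$ and $x$, i.e.\ $d(x_1,x)=R+d(x_0,x)$, setting $f(x):=-d(x_0,x)$ in that case; otherwise I set $f(x):=d(x_0,x)$. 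Any overlap of the two cases forces $d(x_0,x)=0$ and hence $x=x_0$, so the definition is unambiguous. A direct computation from the relevant equality then yields $d(x_1,x)=|R-f(x)|$ in all cases, so $f$ is already an isometry on all pairs containing $x_1$.

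To verify $d(x,y)=|f(x)-f(y)|$ for arbitrary $x,y\in X$, I would assume without loss of generality $f(x)\le f(y)$ and examine the triangle equality that must hold on $\{x_0,x,y\}$. One of the three sub-cases directly gives the desired identity $d(x,y)=f(y)-f(x)$; the other two sub-cases I would rule out by confronting them with the equality forced on the auxiliary triple $\{x_1,x,y\}$ together with the already-known values $d(x_1,x)=|R-f(x)|$ and $d(x_1,y)=|R-f(y)|$. In each wrong sub-case the resulting identities force two of $x_0,x_1,x,y$ to coincide, reducing to a trivially verified edge case.

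I expect the main obstacle to be exactly this last step. The sign in the definition of $f(x)$ is determined only locally, by the triple $\{x_0,x_1,x\}$, so a priori there is no reason that two points $x,y$ receive signs consistent with their metric position relative to each other. The role of the auxiliary triple $\{x_1,x,y\}$ is precisely to enforce the global consistency of the sign assignment, and the bookkeeping across the various sign combinations and degenerate configurations is the one nontrivial piece of an otherwise elementary argument. Once every sub-case is eliminated, $f$ is the desired isometric embedding $X\hookrightarrow\R$.
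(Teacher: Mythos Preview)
The paper leaves this lemma as an exercise, so there is no proof to compare against directly. More to the point, your argument has a genuine gap at precisely the step you flag as delicate, because the lemma \emph{as stated} is false. Consider the four-point space $\{x_0,x_1,x,y\}$ with $d(x_0,x_1)=d(x,y)=2$ and the remaining four distances all equal to $1$. Every triple has side lengths $1,1,2$ and hence satisfies a triangle equality, yet the space does not embed isometrically in $\R$: placing $x_0=0$ and $x_1=2$ forces $x=y=1$, contradicting $d(x,y)=2$. In your construction this example gives $f(x)=f(y)=1$; the ``wrong'' sub-case $d(x,y)=d(x_0,x)+d(x_0,y)=2$ holds, and the auxiliary triple $\{x_1,x,y\}$ yields the equally consistent identity $d(x,y)=d(x_1,x)+d(x_1,y)=2$. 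No pair of points is forced to coincide, so the elimination you propose cannot be carried out.

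What rescues the paper's application is that the lemma is only invoked for $(X\setminus\{p\},d_p)$ where the ambient space has $\crt\in\partial\Delta$ for \emph{every} admissible quadruple; in particular the inverted space is Ptolemy. The obstruction above is not Ptolemy, since $d(x_0,x_1)\,d(x,y)=4>2=d(x_0,x)\,d(x_1,y)+d(x_0,y)\,d(x_1,x)$. With the Ptolemy inequality available your case analysis can in fact be completed: in the problematic sub-case $0\le f(x)\le f(y)\le R$ with $d(x,y)=f(x)+f(y)$, applying Ptolemy to the quadruple $(x_0,x,x_1,y)$ gives $R\bigl(f(x)+f(y)\bigr)\le f(x)\bigl(R-f(y)\bigr)+f(y)\bigl(R-f(x)\bigr)=R\bigl(f(x)+f(y)\bigr)-2f(x)f(y)$, forcing $f(x)f(y)=0$ and hence the desired degeneracy; the remaining sign cases are handled analogously. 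So the lemma needs an extra hypothesis such as Ptolemy, and your final step must invoke it.
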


Thus $X$ is M\"obius equivalent to $Y\cup\{\infty\}$, where
$Y \subset \R$. Since we always assume that $X$ contains at least two points
and Ptolemy segment spaces are connected, we immediately obtain
Corollary \ref{cor-4points}.


\subsection{$3$-Point Ptolemy-Circle Spaces}
\label{subsec-3point-circle}

In this section we characterize all $3$-point Ptolemy circle spaces up to M\"obius equivalence and prove Theorem \ref{main-theorem}. \\

The idea of the proof is the following. First
we take an arbitrary point $z \in X$ and consider the
metric space $(X\sm \{ z\}, d_z)$. This is a geodesic space
by the results of Section \ref{subsec:uofptcirc}. We show that on
this space the affine functions separate points. Thus, by the Hitzelberger-Lytchak Theorem,
$(X\sm\{z\},d_z)$ is isometric to a convex subset of a normed vector space.
Actually since all geodesics extend to lines it is isometric to a normed vector space.
By Schoenberg's Theorem this vector space is an inner product space and hence
by local compactness isometric to $\mathbb{E}^n$, which completes the proof.

\begin{lemma} \label{lemma-affine}
Let $X$ be a compact 3-point Ptolemy circle space and let $z\in X$. Then affine functions
separate points on the geodesic space $(X\setminus \{ z\} , d_z)$. 
\end{lemma}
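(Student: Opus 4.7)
The plan is to exhibit, for every pair of distinct points $x, y \in X \sm \{z\}$, an affine function on $(X \sm \{z\}, d_z)$ separating them, via Busemann functions on an appropriate geodesic line. Using the 3-point hypothesis together with Theorem \ref{thm:unique_circle}, let $\sigma \sub X$ be the unique Ptolemy circle through $x$, $y$, $z$. By Section \ref{subsec:uofptcirc}, the set $\ell := \sigma \sm \{z\}$ is a complete geodesic line in $(X \sm \{z\}, d_z)$. Parameterize $\ell$ by arclength as $\gamma \colon \R \to X \sm \{z\}$ with $\gamma(0) = x$, and denote by $c^+$, $c^-$ the two opposite rays of $\ell$ based at $x$. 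Let $b^+$, $b^-$ be the corresponding Busemann functions, which are convex on $(X \sm \{z\}, d_z)$ by Section \ref{sec:bus_convex}.

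The central claim is that $b^+ + b^- \equiv 0$ on $X \sm \{z\}$. Granting this, $b^- = -b^+$ is convex, so $b^+$ is simultaneously convex and concave, hence affine. Since $b^+(\gamma(s)) = -s$, the affine function $b^+$ takes different values at $x$ and $y$, as required.

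For the easy direction $b^+ + b^- \ge 0$, apply the Ptolemy inequality in $(X, d)$ to $\{z, p, c(-t), c(t)\}$ with $(z, p)$ and $(c(-t), c(t))$ as the diagonal pair, and divide through by $d(z, p) \, d(z, c(t)) \, d(z, c(-t))$ to obtain
\begin{equation*}
d_z(p, c(t)) + d_z(p, c(-t)) \ge 2t.
\end{equation*}
Passing to the limit $t \to \infty$ yields $b^+(p) + b^-(p) \ge 0$.

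The reverse inequality is the main obstacle, and is where the 3-point circle hypothesis (beyond the mere Ptolemy inequality for arbitrary 4-tuples) and compactness of $X$ enter in an essential way. For each sufficiently large $t$, the 3-point hypothesis together with Theorem \ref{thm:unique_circle} yields a unique Ptolemy circle $\tau_t$ through $p$, $c(t)$ and $c(-t)$, on which the Ptolemy inequality becomes an equality for cyclically ordered 4-tuples. Since $d_z(z, c(\pm t)) = \infty$ forces $c(\pm t) \to z$ in the original metric $d$, compactness of $X$ allows extraction of a subsequential limit circle $\tau_\infty$ through $p$ and $z$. Transferring the Ptolemy equality on $\tau_t$ to this limit should convert the Ptolemy inequality on $\{z, p, c(-t), c(t)\}$ into an equality modulo $o(1)$, giving $d_z(p, c(t)) + d_z(p, c(-t)) = 2t + o(1)$, hence $b^+(p) + b^-(p) \le 0$. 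The delicate point is controlling the convergence of the family $\{\tau_t\}$ in a topology strong enough to preserve Ptolemy equalities and to identify $\tau_\infty \sm \{z\}$ with a geodesic line through $p$ asymptotic to $\ell$ at both ends.
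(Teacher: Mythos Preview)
Your high-level strategy matches the paper's: pass to $(X\sm\{z\},d_z)$, observe that the circle through $x,y,z$ gives a complete geodesic line containing $x$ and $y$, and separate $x$ from $y$ by an affine Busemann function attached to that line. The divergence is in how you argue that Busemann functions are affine.

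Your route is to prove $b^+ + b^- \equiv 0$ for the two opposite Busemann functions of the line $\ell$, so that convexity of each forces both to be affine. The inequality $b^+ + b^- \ge 0$ is indeed immediate (it is just the triangle inequality in $d_z$). The genuine gap is the reverse inequality. You propose to take the unique Ptolemy circle $\tau_t$ through $p, c(t), c(-t)$ and pass to a limit circle $\tau_\infty$ through $p$ and $z$, but note that for $p\notin\ell$ one has $z\notin\tau_t$ (otherwise $\tau_t$ and $\sigma$ would share the three points $z,c(t),c(-t)$ and hence coincide, contradicting $p\notin\sigma$). So the quadruple $\{z,p,c(-t),c(t)\}$ never lies on $\tau_t$, and there is no reason the Ptolemy \emph{inequality} for this quadruple should be within $o(1)$ of equality. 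You yourself flag the ``delicate point'' of controlling convergence of $\{\tau_t\}$ in a strong enough topology and identifying $\tau_\infty\sm\{z\}$ as a line asymptotic to $\ell$ at both ends; this is essentially the entire content of the hard direction, and it is not carried out. In particular, no compactness for the family of circles is established, and even granting subsequential convergence, you would still need to prove that Busemann functions of the limit line agree with $b^\pm$, which amounts to an asymptotic-rigidity statement you do not have.

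The paper avoids this circle-limit altogether. It proves directly that \emph{every} Busemann function $b$ is affine by checking the midpoint inequality $b(m)\ge\tfrac12(b(x)+b(y))$ (convexity gives the other direction). For $w_i=c(i)$ one takes the Ptolemy circle $\sigma_i$ through $x,y,w_i$ and, on the arc from $x$ to $w_i$ through $y$, chooses $u_i$ with $d_z(y,u_i)=a:=\tfrac12 d_z(x,y)$. The Ptolemy \emph{equality} on $\sigma_i$ for the ordered quadruple $x,y,u_i,w_i$ yields $3\,d_z(y,w_i)\ge 2\,d_z(u_i,w_i)+d_z(x,w_i)$, hence in the limit $b(y)\ge\tfrac23 b(u)+\tfrac13 b(x)$; the same Ptolemy equality forces $d_z(x,u)=3a$, so $y$ is the midpoint of $m$ and $u$, giving $b(y)\le\tfrac12(b(m)+b(u))$ by convexity. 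These two combine to $b(m)\ge\tfrac12(b(x)+b(y))$. This argument uses only local compactness (to extract $u$) and the Ptolemy equality on a \emph{single} circle for each $i$; no convergence of circles is needed. I recommend replacing your limiting-circle sketch by this direct midpoint estimate.
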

\begin{proof}

We will show below that the Busemann function $b_c$ associated to a geodesic ray $c$ in  $(X\setminus \{ z\} ,d_z)$ is affine.
Assume this for a moment, and let 
$x,y \in  X\setminus \{ z\}$, $x\neq y$ be distinct points. 
Note that by the results of Section \ref{subsec:uofptcirc} the geodesic
between $x$ and $y$ is a part of the unique geodesic circle
through $x,y,z$ whose part
in $(X\sm \{ z\}, d_z)$ is a geodesic line $h$. This line contains $x$ and $y$ and
the Busemann function of this line separates these points,
$b_h(x)\neq b_h(y)$.  
\end{proof}

It remains to show

\begin{proposition} \label{pro:Busemann_convex_circle}
Busemann functions $b:X\sm\{z\}\to \R$ are affine.
\end{proposition}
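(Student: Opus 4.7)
The plan is to show that $b_c$ is both convex and concave, and therefore affine; convexity is already available from Section~\ref{sec:bus_convex}, so the task is really to establish concavity. The decisive feature of a $3$-point Ptolemy-circle space is that every geodesic ray in $(X\setminus\{z\},d_z)$ extends canonically to a complete geodesic line. Concretely, by Theorem~\ref{thm:unique_circle} there is a unique Ptolemy circle $\sigma$ through the three points $z,c(0),c(1)$, and $\ell:=\sigma\setminus\{z\}$ is a complete geodesic line in $(X\setminus\{z\},d_z)$ containing~$c$. Let $\tilde c$ denote the opposite ray $\tilde c(s):=\ell(-s)$, parameterized so that $\tilde c(0)=c(0)=:p$, and set $b':=b_{\tilde c}$; by Section~\ref{sec:bus_convex} $b'$ is also convex.

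The main claim I will prove is the identity
\begin{equation*}
b_c(x)+b'(x)=0\qquad\text{for every }x\in X\setminus\{z\}.
\end{equation*}
Granting this, $b_c=-b'$ is the negative of a convex function, hence concave, and the proposition follows. The easy half, $b_c(x)+b'(x)\ge 0$, comes from $d_z(c(t),\tilde c(t))=2t$ and the triangle inequality through $x$: subtracting $2t$ from $d_z(c(t),x)+d_z(x,\tilde c(t))\ge 2t$ and letting $t\to\infty$ gives exactly this. Note that this lower bound becomes an equality when $x$ lies on $\ell$ itself, which is reassuring.

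The hard half $b_c(x)+b'(x)\le 0$ is where the main obstacle lies. Observe that the bare Ptolemy \emph{inequality} applied to the quadruple $\{x,p,c(t),\tilde c(t)\}$ in $d_z$ only reproduces triangle-inequality consequences, so one really has to exploit the Ptolemy \emph{equality} coming from the circle $\sigma$. My plan is to pass to a M\"obius representative $d\in\cM$ in which $z$ is a finite point, so that $d(c(t),z),d(\tilde c(t),z)\to 0$ as $t\to\infty$ (the only accumulation point of $c(t)$ in $X$ is $z$, by compactness and the fact that $d_z(p,c(t))=t\to\infty$). Expressing $d_z(u,v)=d(u,v)/\bigl(d(u,z)d(v,z)\bigr)$, I rewrite $b_c(x)+b'(x)$ as a single fraction whose leading divergent terms cancel by the Ptolemy equality on $\sigma$ applied to the cyclic quadruple $(\tilde c(t),p,c(t),z)$. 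The remaining numerator is precisely the Ptolemy \emph{defect} of the quadruple $(x,\tilde c(t),c(t),z)$, which is non-positive for the diagonal pairing $(x,z)$--$(\tilde c(t),c(t))$ by the Ptolemy inequality in $d$. The key technical point — and the main obstacle — is the asymptotic analysis: one must verify that the relevant defect vanishes at the right rate, so that after dividing by $d(c(t),z)d(\tilde c(t),z)$ (which tends to zero at a rate controlled by $\sigma$) the quotient admits a finite, non-positive limit. Once this is in place, both inequalities combine to give $b_c+b'\equiv 0$, and hence $b_c$ is affine.
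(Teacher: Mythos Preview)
Your strategy has a sign error that is fatal to the ``hard half''. Write everything in a bounded metric $d\in\cM$ with $z$ finite, set $X_z=d(x,z)$, $a_t=d(c(t),z)$, $b_t=d(\tilde c(t),z)$, and use $d_z(u,v)=d(u,v)/\bigl(d(u,z)d(v,z)\bigr)$. Then
\[
d_z(x,c(t))+d_z(x,\tilde c(t))-d_z(c(t),\tilde c(t))
=\frac{1}{X_z a_t b_t}\Bigl[d(x,c(t))\,b_t+d(x,\tilde c(t))\,a_t - d(x,z)\,d(c(t),\tilde c(t))\Bigr].
\]
The bracket is exactly the \emph{negative} of your ``Ptolemy defect'' for the diagonal pairing $(x,z)$--$(\tilde c(t),c(t))$. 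Since the Ptolemy inequality makes that defect $\le 0$, the bracket is $\ge 0$, and you recover only $b_c(x)+b'(x)\ge 0$---the easy half you already had from the triangle inequality. The Ptolemy inequality cannot give the opposite sign; for that you would need the quadruple $(x,\tilde c(t),c(t),z)$ to lie on a Ptolemy circle, which is exactly what fails for generic $x$. The Ptolemy equality on $\sigma$ for $(\tilde c(t),p,c(t),z)$ that you invoke translates in $d_z$ simply to $d_z(\tilde c(t),p)+d_z(p,c(t))=d_z(\tilde c(t),c(t))$, i.e.\ $t+t=2t$, and carries no new information.

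The deeper point is that your argument only uses the $3$-point circle hypothesis once, to produce the line $\ell$ through $z,c(0),c(1)$; after that you work only with the ambient Ptolemy inequality. But the identity $b_c+b_{\tilde c}\equiv 0$ is equivalent to affinity of $b_c$, and this is \emph{not} a consequence of having a single line in a locally compact geodesic Ptolemy space. The paper's proof uses the hypothesis in a much stronger way: for the given midpoint configuration $x,m,y$ it passes a circle $\sigma_i$ through $x,y,w_i=c(i)$ for each $i$, chooses an auxiliary point $u_i\in\sigma_i$ with $d(y,u_i)=d(y,m)$, and applies the Ptolemy \emph{equality} along $\sigma_i$ to get $d(y,w_i)\ge\tfrac{2}{3}d(u_i,w_i)+\tfrac{1}{3}d(x,w_i)$. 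Local compactness gives a subsequential limit $u$ lying on a geodesic with $x,y$, so that $y\in m(m,u)$; combining the limit inequality $b(y)\ge\tfrac{2}{3}b(u)+\tfrac{1}{3}b(x)$ with convexity at $y$ yields $b(m)\ge\tfrac{1}{2}(b(x)+b(y))$. This is where the circle through $x$ and $y$ (not merely through points of $\ell$) is indispensable.
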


\begin{proof}

Let $c:[0,\infty )\longrightarrow X\setminus \{ z\}$ be an arbitrary geodesic ray in  $(X\setminus \{ z\} , d_z)$.
In order to prove that the Busemann function $b:=b_c$ is affine, 
consider $x,y,m \in X$ with
$a:=d(x,y)/2=d(x,m)=d(m,y)$.
Since Busemann functions are convex (cf. Section \ref{sec:bus_convex}),
we have $b(m)\le \frac{1}{2}(b(x)+b(y))$.

To prove that $b$ is affine, we need to prove the opposite inequality,
i.e. we have to prove:

\begin{equation} \label{eq:0}
b(m)\ge \frac{1}{2}(b(x)+b(y))
\end{equation}

Consider $w_i=c(i)$ for $i\to \infty$. By assumption there exists a
Ptolemy circle $\sigma_i$ containing $x,y,w_i$.

We consider the subsegment of this circle
which contains $x$ and $w_i$ as boundary points and $y$ as interior
point. On this segment we consider the orientation that
$x<y<w_i$ and we choose for $i$ large enough $u_i\in \sigma_i$ with
$x<y<u_i<w_i$, such that $d(y,u_i)=a$, and hence
$d(x,u_i)\le 3a$. By the Ptolemy equality on the segment we have
$$3a\,	d(y,w_i)\ge d(y,w_i)\, d(x,u_i) = 2a\, d(u_i,w_i)+\, a\, d(x,w_i)$$
and hence
\begin{equation} \label{eq:1}
d(y,w_i)\ge \frac{2}{3}d(u_i,w_i)+ \frac{1}{3} d(x,w_i) .
\end{equation}

Since $(X\sm\{z\})$ is locally compact,
some subsequence converges, thus we can assume $u_i\to u$ and 
$x,y,u$ lie on a geodesic.

Since $u_i\to u$, the Inequality (\ref{eq:1})
implies in the limit for the Busemann function

\begin{equation} \label{eq:2}
b(y)\ge \frac{2}{3}b(u)+ \frac{1}{3} b(x).
\end{equation}

Since $x,y,u$ are on a geodesic and $d(y,u)=a$,
we have $d(x,u)=3a$. By triangle inequalities this implies
$d(m,u)=2a$, and hence $y$ is a midpoint of $m$ and $u$.
This again implies
\begin{equation} \label{eq:3}
b(y)\le \frac{1}{2}(b(m)+b(u)).
\end{equation}

Now an easy computation shows that Inequalities
(\ref{eq:2}) and (\ref{eq:3})
imply the desired
estimate (\ref{eq:0}).
\end{proof}

Lemma \ref{lemma-affine} now allows us to provide the \\

{\bf Proof of Theorem \ref{main-theorem}:} From Theorem by \ref{theo-HL}, we
deduce with Lemma \ref{lemma-affine} that $(X\setminus \{ z\},d_z)$ 
is a convex subset of 
a (strictly convex) normed vector space. 
Furthermore, all geodesic segments extend to complete lines.
Thus we see that $(X\sm\{z\},d_z)$, is itself isometric to a normed vector space.
Since it also is a Ptolemy space, Schoenberg's Theorem, Theorem \ref{theo-Sch}, implies 
that it is a Euclidean space. 
By local compactness it is isometric to $\mathbb{E}^n$ for some $n\in \N$.
\begin{flushright}
$\Box$
\end{flushright}


\subsection{An Example of a M\"obius Sphere}
\label{subsec-exotic}

In this section we provide an example of a metric sphere which is M\"obius equivalent, but
not homothetic to the standard chordal sphere. \\

Let $X$ be a $\operatorname{CAT}(\kappa )$-space, $\kappa <0$, and let $o\in X$. We recall that the Bourdon metric 
$\rho_o:\partial_{\infty}X\times \partial_{\infty}X\longrightarrow \mathbb{R}_0^+$
can also be expressed in terms of the Gromov product on $\partial_{\infty}X$. \\
Given $x,y\in X$, the {\em Gromov product} $(x\cdot y)_o$ of $x$ and $y$ w.r.t. the basepoint $o$
is defined as
\begin{displaymath}
(x\cdot y)_o \; := \; \frac{1}{2} \Big[ d(x,o) \, + \, d(y,o) \, - \, d(x,y) \Big] .
\end{displaymath}
This Gromov product naturally extends to points at infinity, by
\begin{displaymath}
(\xi \cdot \xi')_o \; := \; \lim\limits_{i\longrightarrow \infty} \, (x_i \cdot x_i')_o \hspace{0.5cm}
\forall \xi , \xi' \in \partial_{\infty}X,
\end{displaymath}
where $\{ x_i \}$ and $\{ x_i'\}$ are sequences in $X$ converging to $\xi$ and $\xi'$, respectively,
i.e. sequences which satisfy $\lim\limits_{i\rightarrow \infty} (\gamma_{o,\xi}(i) \cdot x_i)_o =\infty$
and  $\lim\limits_{i\rightarrow \infty} (\gamma_{o,\xi'}(i) \cdot x_i')_o =\infty$, respectively,
where $\gamma_{o,\eta}$ denotes the unique geodesic ray connecting $o$ to $\eta \in \{ \xi ,\xi'\}$. \\
This limit exists and does not depend on the choice of sequence. This phenomenon is referred to as the
so called {\rm boundary continuity} of $\operatorname{CAT}(\kappa )$-spaces (cf. Section 3.4.2 in \cite{buys}
and note that one can generalize the proof given there for proper $\operatorname{CAT}(\kappa)$-spaces
to the non-proper case). \\
With this notation one can write the Bourdon metric $\rho_o$ as 
\begin{displaymath}
\rho_o(\xi ,\xi') \; = \; e^{-\sqrt{-\kappa}(\xi \cdot \xi)_o} \hspace{1cm} \forall \xi ,\xi'\in \partial_{\infty}X 
\hspace{1cm} \mbox{(cf. \cite{FS1})}.
\end{displaymath}

\begin{example}
\label{example-exotic-m-s}
Consider the $3$-dimensional real hyperbolic space $\mathbb{H}^3_{-1}$
of constant curvature $-1$ in the Poincar\'e ball model. Let $o$ denote
the center of the ball and consider a complete geodesic $\gamma$ through $o$. \\
Now we glue a real hyperbolic halfplane $H$ of curvature $-1$ along $\gamma$.
The resulting space $X$ is a $\operatorname{CAT}(-1)$-space. \\
Let $o'\in H$ be some point, the projection of which in $H$ on $\gamma$ 
coincides with $o$. 
The Bourdon metric $\rho_o$ of $\partial X$ w.r.t. $o$ when restricted to the 
boundary of $\mathbb{H}^3_{-1}$, $S^2=\partial_{\infty}\mathbb{H}^3_{-1}\subset \partial_{\infty}X$,
is isometric to $S^2$ when endowed with half of its chordal metric. \\
The Bourdon metric $\rho_{o'}$ of $\partial X$ w.r.t. $o'$ when restricted to  
$\partial_{\infty}\mathbb{H}^3_{-1}$ is M\"obius equivalent to $\rho_o$. We now verify
that $(S^2,\rho_{o'})$ is not homothetic to the standard chordal sphere $(S^2,d_0)$. \\

Let $N=\{ \gamma (i)\}_{i}$ and $S=\{ \gamma (-i)\}_{i}$ denote the endpoints of $\gamma$ in $S^2$. They are diametral points w.r.t.
$\rho_o$ and define the equator $A$ as such sets of points with coinciding distances to
$N$ and $S$, respectively. Note that by the symmetry of the construction, $A$ has the very
same property w.r.t. the metric $\rho_{o'}$ \\
Since every geodesic ray in $X$ from $o'$ to some $a\in A$ contains $o$, $A$ endowed with the
Bourdon metric $\rho_{o'}$ with respect to $o'$ is isometrically a Euclidean circle of radius
$e^{-l}$, where $l:=|oo'|$. \\
In contrast to the points $a\in A$, the points $N$ and $S$ also lie in the boundary of the
halfplane $H$. Hence the geodesic rays connecting $o'$ to $N$ and $S$, respectively, do not
contain $o$. In fact, denote by $b_{\gamma}$ the Busemann function associated to $\gamma$ normalized
such that $b_{\gamma}(o)=0$, then 
\begin{displaymath}
\rho_{o'} (N,S) \; = \; e^{-(N\cdot S)_{o'}} \; = \; e^{-b_{\gamma}(o')}> e^{-l}. 
\end{displaymath}
It follows that $(S^2,\rho_{o'})$ is not homothetic to $(S^2,d_0)$.
\end{example}


\subsection{$3$-Point Ptolemy-Segment Spaces}
\label{subsec-3point-segment}

In this section we consider $3$-point Ptolemy segment spaces and we 
will prove Theorem \ref{theo-3pt-int}.

The proof of this result is surprisingly much more involved 
than the proof of Theorem \ref{main-theorem}. The idea of the proof is the same,
but there arise quite a number of technical problems.

The main step is to reduce the problem to the
case, that $X$ is already a subset
of the classical space $(\mathbb{S}^n,d_0)$. 
For the classical case our result can also be reformulated
in the following way:

\begin{proposition} \label{prop:eucl}
Let $X\sub \R^k\cup\{\infty\}$ with $\infty \in X$ such
that $X\sm\{\infty\}$ is a closed subset of $\R^k$.
Assume that through any three distinct points of $X$ there
exists a circle segment through these points.
Then $X\sm \{\infty\}$ is contained in an affine subspace
$H\sub \R^k$ (which could be $\R^k$ again) and 
$H\cap X$
is isometric 
either to $H$, to some closed halfspace in $H$ or
to the complement of some open distance ball in $H$.
\end{proposition}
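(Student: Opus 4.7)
The plan begins by identifying Ptolemy circles and segments explicitly in the Euclidean model. By the M\"obius equivalence $(S^k,d_0)\leftrightarrow\R^k\cup\{\infty\}$ and Ptolemy's classical theorem, Ptolemy circles in $\R^k\cup\{\infty\}$ are exactly the Euclidean circles together with $L\cup\{\infty\}$ for Euclidean lines $L$, and Ptolemy segments are the connected proper sub-arcs of these. Consequently, any Ptolemy segment containing $\infty$ lies on a single $L\cup\{\infty\}$ and is either a closed ray $[p,\infty)\cup\{\infty\}$ (with $\infty$ an endpoint) or a ``double ray'' $(L\setminus(c,d))\cup\{\infty\}$ with $c<d$ in $L$ (with $\infty$ in the interior).

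For each $x,y\in X\setminus\{\infty\}$ with $L=\ell(x,y)$ and $L_X=L\cap(X\setminus\{\infty\})$, I apply the $3$-point hypothesis to $\{x,y,\infty\}$. The resulting circle or segment lies in $L\cup\{\infty\}$, so some subset of $L$ of one of the three forms---all of $L$, a closed ray, or $L$ minus a bounded open interval---and containing $x,y$ lies in $L_X$. Since $L_X$ is closed in $L$, a short gap analysis that excludes a second gap and excludes a bounded $L_X$ with at least two points shows that $L_X$ itself is exactly one of the three forms whenever $|L_X|\geq 2$. Let $H\subset\R^k$ be the affine hull of $X\setminus\{\infty\}$; replacing $\R^k$ by $H$ one may assume $H=\R^k$.

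Call a line $L$ with $|L_X|\geq 2$ of type~(1), (2), or (3) when $L_X$ is all of $L$, a proper closed ray, or $L$ minus a bounded open interval, respectively. A short argument applying the hypothesis to the pair consisting of a ray endpoint and a gap endpoint on two different lines rules out the coexistence of types~(2) and~(3), so one is in one of three cases. In Case~A only type~(1) occurs; the line through any two points of $X\setminus\{\infty\}$ lies in $X\setminus\{\infty\}$, and iterating over triples and higher flats yields $X\setminus\{\infty\}=H$. In Case~B type~(2) occurs and type~(3) does not; then $Y:=H\setminus(X\setminus\{\infty\})$ is nonempty, open, convex, unbounded, and its intersection with each line is empty, the whole line, or an open ray, so $\partial Y$ is an affine hyperplane and $Y$ is an open affine halfspace of $H$. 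In Case~C type~(3) occurs (and thus not type~(2)); then $Y$ is nonempty, open, convex, bounded, and each line meets $Y$ either trivially or in a bounded open interval; it remains to show $Y$ is a round Euclidean ball.

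The main obstacle is the ball rigidity in Case~C. The additional input comes from applying the $3$-point hypothesis to \emph{non-collinear} triples $x,y,z\in X$: through such a triple there passes a unique Euclidean circle $\Gamma$, and the hypothesis forces either $\Gamma\subset X$ or a sub-arc of $\Gamma$ containing $x,y,z$ to lie in $X$, hence outside $Y$. Specializing $x,y,z$ to $\partial Y$ and using convexity of $Y$, the forced arcs of $\Gamma$ avoiding $\overline Y$ together imply that every planar section of $\partial Y$ through three of its points is itself a Euclidean circle lying on $\partial Y$; the classical characterization that a closed convex hypersurface having this concyclicity property is a Euclidean sphere then forces $\partial Y$ to be a sphere and $Y$ to be a ball. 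A closely analogous but simpler rigidity, with circles replaced by their line limits, handles Case~B by confirming the hyperplane structure of $\partial Y$.
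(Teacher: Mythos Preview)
Your line-by-line classification and the trichotomy into Cases A/B/C is a reasonable strategy, and it is genuinely different from the paper's route. However, the arguments you offer for Cases B and C are not sufficient as written. In Case~B you assert that an open convex set $Y$ whose intersection with every line is empty, an open ray, or the whole line must be an open halfspace. This is false: an open planar wedge such as $Y=\{x_1>0\}\cap\{x_1+x_2>0\}$ has exactly this line-section property but is not a halfspace. So the line data alone cannot finish Case~B; you must feed in the three-point hypothesis on \emph{non-collinear} triples, and you have not indicated how. In Case~C the gap is larger: you claim that ``the forced arcs of $\Gamma$ avoiding $\overline Y$ together imply that every planar section of $\partial Y$ is a Euclidean circle'', but you give no mechanism for this. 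For three boundary points the hypothesis only guarantees that at most one of the three arcs of the circumcircle meets $Y$; turning this into ``$\partial Y\cap P$ is a circle'' is a genuine rigidity statement that needs a proof, not an appeal.

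The paper avoids precisely this difficulty by a M\"obius trick: in the situation where $X\setminus\{\infty\}$ is not contained in any halfspace and is not all of $\R^k$, one finds an open ball $D\subset\R^k\setminus X$ touching $X$ at some point $x$, and then applies the Euclidean inversion $p\mapsto p/\|p\|^2$ centred at $x$ (together with $x\leftrightarrow\infty$). This is a M\"obius map of $\R^k\cup\{\infty\}$, so it preserves the three-point segment property and sends $D$ to an open halfspace; the image of $X$ now lies in a closed halfspace, reducing everything to Case~2. Thus the paper never needs to prove directly that $Y$ is a round ball. Your approach could in principle be completed, but it would require substantially more work (a real proof of the planar concyclicity in Case~C and an honest use of circles, not just lines, in Case~B), whereas the inversion reduces both obstacles to a single halfspace case.
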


We will give a scetch of the proof at the end of the section.

We now come to the proof of Theorem \ref{theo-3pt-int}. \\

Let $(X,d)$ be a compact $3$-point Ptolemy segment space, and consider
again a chosen point $z\in X$ and the locally compact
metric space $M:=(X\sm\{z\},d_z)$. 

Let $x,y\in M$ be distinct points.
By assumption there exists a Ptolemy segment $\sigma$ in $X$ containing
$x,y,z$. Using the arguments of Section \ref{subsec-4point} and
Lemma \ref{lem:triangle} we see that
$\sigma\sm\{z\}$ is in the metric space $M$ isometric to a subset of
the real line $\R$. Thus using some parameterization by arclength
there exists an isometric map $c_{xy}:\R\sm I\to M$, with
$c_{xy}(0)=x$, $c_{xy}(d(x,y))=y$, where $I\sub \R$ is either empty or an
open interval. 
We call the image $c_{xy}(\R\sm I) \sub M$ an
{\em geodesic minus interval} through $x$ and $y$. 
We do not know uniqueness in the moment. The geodesic minus
interval contains at least one ray.
It may be that $I \sub [0,d(x,y)]$ and in this case,
there may not be a geodesic between $x$ and $y$. Thus
$M$ is in general not a geodesic space.
In particular we can not directly use the results of section
\ref{subsec:uofptcirc}. Even if a geodesic exists between two points, we do not 
know uniqueness. We can also not use the Hitzelberger Lytchak theorem.
Instead we have to reprove the relevant results in our special situation.

We first have to state some properties of affine functions.
Let $M$ be a (not necessarily geodesic) metric space. In our context $M$ is
the space $(X\sm\{z\},d_z)$.
For $x,y \in M$ we denote by
$m(x,y)=\{z\in M \, | \, d(x,z)=d(z,y)=\frac{1}{2}d(x,y)\}$ the set of midpoints of $x$ and $y$.
A function $f:X \to \R$ is called
{\em affine}, if
for all $x,y \in X$ and all $m \in m(x,y)$,
we have
$f(m) =\frac{1}{2}(f(x)+f(y))$. \\

Essential for our argument is that Busemann functions of geodesic
rays are affine in this sense.

\begin{proposition} \label{pro:Busemann_convex_segment}
A Busemann function $b:M\to \R$ is affine in this sense.
\end{proposition}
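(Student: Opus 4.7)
The plan is to adapt the proof of Proposition \ref{pro:Busemann_convex_circle} to the weaker hypothesis that the auxiliary curve through $(x,y,w_i)$ may now be a Ptolemy segment rather than a circle, and to the fact that $M := (X \setminus \{z\}, d_z)$ need not be a (uniquely) geodesic space. Fix distinct $x, y \in M$, a midpoint $m \in m(x, y)$, and set $a := d_z(x, y)/2$. The inequality $b(m) \le \tfrac{1}{2}(b(x) + b(y))$ requires no geodesic structure at all: the Ptolemy inequality applied to the quadruple $(c(t), x, m, y)$, together with $d_z(x, m) = d_z(m, y) = a$ and $d_z(x, y) = 2a$, yields $d_z(c(t), m) \le \tfrac{1}{2}(d_z(c(t), x) + d_z(c(t), y))$, whence the claim by subtracting $t$ and letting $t \to \infty$.

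For the reverse inequality I follow the template of Proposition \ref{pro:Busemann_convex_circle} with $w_i := c(i)$. By the 3-point hypothesis, for each $i$ there exists a Ptolemy circle or segment $\sigma_i$ containing $x, y, w_i$; passing to a subsequence, the type of $\sigma_i$ and the ordering of the triple on $\sigma_i$ may be assumed constant. The circle case is handled exactly as before. In the segment case, I first argue that for $i$ large the triple may be placed in the order $x < y < w_i$ along $\sigma_i$, possibly after swapping $x$ and $y$, which is harmless since the target inequality is symmetric: since $w_i \to z$ in the ambient metric $d$ on $X$ while $x$ and $y$ stay fixed away from $z$, the portion of $\sigma_i$ lying between $x$ and $y$ cannot house $w_i$ indefinitely, a point ruled out by a compactness argument in $(X, d)$ combined with the Ptolemy parameterization of Section \ref{sec-charac-circles}. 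Once this order is secured, pick $u_i \in \sigma_i$ with $y < u_i < w_i$ and $d_z(y, u_i) = a$; the triangle inequality gives $d_z(x, u_i) \le 3a$, and the four-point Ptolemy equality on the segment produces, exactly as in the circle case,
\begin{equation*}
d_z(y, w_i) \;\ge\; \tfrac{2}{3}\, d_z(u_i, w_i) \;+\; \tfrac{1}{3}\, d_z(x, w_i).
\end{equation*}
Extract $u_i \to u$ by local compactness; then $d_z(y, u) = a$ and, passing to the limit, $b(y) \ge \tfrac{2}{3} b(u) + \tfrac{1}{3} b(x)$.

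To close the argument, I must identify $y$ as a midpoint of $m$ and $u$, so that the midpoint inequality $b(y) \le \tfrac{1}{2}(b(m) + b(u))$ from the easy direction combines with $b(y) \ge \tfrac{2}{3} b(u) + \tfrac{1}{3} b(x)$ to give, after a short linear elimination of $b(u)$, the required $b(m) \ge \tfrac{1}{2}(b(x) + b(y))$. The principal obstacle is precisely this identification: in the circle case it was immediate from the uniqueness of geodesics in $M$, whereas here I have only the geodesic-minus-interval structure of Section \ref{subsec-3point-segment}. I expect to show that $x, y, u$ lie on a common such piece with $d_z(x, y) + d_z(y, u) = d_z(x, u) = 3a$ and that $m$ lies on the same piece at distance $a$ from both $x$ and $y$, so that $d_z(m, u) = 2a$ and $y$ is indeed a midpoint of $m$ and $u$. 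A secondary, essentially technical obstacle is the exclusion of the $w_i$-in-the-middle ordering used above; both points should yield to careful use of the Ptolemy parameterizations of Section \ref{sec-charac-circles} together with the convergence $w_i \to z$ in the original metric $d$ on $X$.
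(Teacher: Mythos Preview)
Your overall strategy and your treatment of the convexity direction are correct and match the paper's. However, both of the obstacles you flag dissolve more simply than you anticipate, and your proposed detour around the second is both unnecessary and unlikely to succeed.

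For the midpoint identification, no geodesic-minus-interval structure is needed. In every configuration the Ptolemy equality on $\sigma_i$, applied to the ordered quadruple $x,y,u_i,w_i$, yields after dividing through by $d_z(y,w_i)$ and letting $i\to\infty$ the exact value $d_z(x,u)=3a$. Then the two triangle inequalities
\[
d_z(m,u)\le d_z(m,y)+d_z(y,u)=2a, \qquad d_z(m,u)\ge d_z(x,u)-d_z(x,m)=2a
\]
force $d_z(m,u)=2a$, so that $y\in m(m,u)$ purely metrically. This is precisely how the paper proceeds; your attempt to place $x,y,u,m$ on a common geodesic-minus-interval piece is superfluous.

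For the second obstacle, there is no need to exclude the ordering with $w_i$ interior and $x,y$ as endpoints of $\sigma_i$; the paper does not attempt this, and your compactness sketch is not convincing---nothing prevents a Ptolemy segment in $M$ with fixed endpoints $x,y$ from passing through points at arbitrarily large $d_z$-distance from both. Instead one handles this case directly: orient $\sigma_i$ as $y<w_i<x$, choose $u_i$ with $y<u_i<w_i<x$ and $d_z(y,u_i)=a$, and write the Ptolemy equality for this ordered quadruple. One obtains the \emph{identical} relation $d_z(x,u_i)\,d_z(y,w_i)=2a\,d_z(u_i,w_i)+a\,d_z(x,w_i)$, hence the same inequality~(\ref{eq:1}) and the same limit $d_z(x,u)=3a$. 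So after passing to a subsequence there are three cases ($x$ interior, $y$ interior, $w_i$ interior), the first two symmetric, and all three are closed out by the same mechanism.
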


We postpone the proof and focus on the consequences. \\

We set
$$\cA'(M):=\{f:M\to \R|\ f\ \mbox{affine and Lipschitz}\}.$$
$\cA'(M)$ is a Banachspace, where $\| f\|$ is the optimal Lipschitz constant.
We set $\cA(M):=\cA'(M)/\sim$, where $f\sim g$ if $(f-g)$ is constant.
By $[f]$ we denote the equivalence class of $f$.
Then also $\cA(M)$ is a Banachspace, where $\|[f]\|=\|f\|$. \\
Let $\cA^*(M)$ be the Banach dual space of $\cA(M)$ with the norm
$$\|\rho\|=sup_{[f]\in\cA(M)}\frac{|\rho([f])|}{\|[f]\|}.$$
For $x,y \in M$ let $E(x,y)\in \cA^*(M)$ 
be the evaluation 
map $E(x,y)([f])=f(x)-f(y)$. 
It follows directly from the definitions
that
$\|E(x,y)\|\le d(x,y)$. \\
For a given basepoint $o\in M$
the map
$$A_o:M\to \cA^*(M),\ \ \ \ \ \ \ \ x\mapsto E(x,o).$$
Note that $A_o$ is $1$-Lipschitz since
$$\|A_o(x)-A_o(y)\|=\|E(x,y)\|\le d(x,y).$$
If $x,y\in M$ and $z\in m(x,y)$ then for all $[f] \in \cA$ we have
$$E(z,o)[f]=f(z)-f(o)=\frac{1}{2}(f(x)+f(y))-f(o)
=(\frac{1}{2}E(x,o)+\frac{1}{2}E(y,o))[f].$$
This implies
\begin{equation} \label{eq:add}
A_o(m)=\frac{1}{2}(A_o(x)+A_o(y))
\end{equation}
We now show that $A_o$ is actually an isometric map.
Let $x,y \in M$, then we have a geodesic minus interval
$c_{xy}:\R\sm I \to M$. Recall that $c_{xy}(\R\sm I)$ contains at least one ray
and let $b$ the Busemann function of such a ray.
Then $b$ is $1$-Lipschitz and $|b(x)-b(y)| =d(x,y)$. \\
Hence 
$$\|A_o(x)-A_o(y)\| = \| E(x,y) \| \ge |E(x,y)([b])|=|b(y)-b(x)|=d(x,y).$$
Thus $A_o$ is an isometric embedding of $M$ into
the Banachspace $\cA^*=\cA^*(M)$.
The equation (\ref{eq:add}) shows that
the geodesic minus interval is mapped to an
affine line minus interval in the Banachspace $\cA^*$. \\
For simplicity we consider in the sequel
$M$ as a subset of the vector space $\cA^*$.

\begin{lemma} \label{l:1}
Let $x_0,\ldots,x_n\in M\sub \cA^*$ be finitely many points.
Let $E$ be the affine hull of these points, i.e. the smallest affine subspace containing these points.
Then $E\cap M$ contains an open set in the induced topology of $E$.
\end{lemma}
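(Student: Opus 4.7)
I proceed by induction on $n$. For $n=0$, $E=\{x_0\}$ is open in itself. For $n=1$, the $3$-point property applied to $\{x_0,x_1,z\}$ yields a geodesic minus interval $c_{x_0,x_1}:\R\sm I\to M$ whose image, by the midpoint identity \eqref{eq:add}, lies on the affine line $L_{x_0,x_1}=E$; since the domain always contains at least one ray, we obtain a non-empty open half-line in $E\cap M$.

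For the inductive step, assume the statement for $n-1$, let $E'=\mathrm{aff}(x_0,\ldots,x_{n-1})$, and assume without loss of generality $x_n\notin E'$ (else $E=E'$ and we are done by induction). By the induction hypothesis there is a relatively open set $U'\sub E'\cap M$ in $E'$. Consider the affine sweep
\[
\Phi:U'\times \R\longrightarrow E,\qquad \Phi(u,t)=(1-t)u+t\,x_n.
\]
Because $x_n\notin E'$, at any $(u,t)$ with $t\neq 1$ the differential of $\Phi$ surjects onto the direction of $E$, so $\Phi$ is locally open off the slice $t=1$. For every $u\in U'$ the $3$-point property applied to $\{u,x_n,z\}$ produces a geodesic minus interval through $u$ and $x_n$ whose image in $M$ lies along the affine line $L_{u,x_n}\sub \cA^*$; after rescaling the arclength parameter to $t=s/d(u,x_n)$ this gives a closed set $T_u\sub \R$ containing $0$ and $1$ and at least one unbounded ray, with $\Phi(u,t)\in M$ whenever $t\in T_u$. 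The proof thus reduces to finding $u_0\in U'$, a parameter $t_0\neq 1$, and open neighborhoods $W\ni u_0$ in $U'$ and $J\ni t_0$ in $\R$ such that $J\sub T_u$ for every $u\in W$: once this is in hand, $\Phi(W\times J)$ is an open subset of $E$ contained in $M$.

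The hard part is this local uniformity statement. In a $3$-point Ptolemy segment space the Ptolemy segment through $\{u,x_n,z\}$ need not be unique, so $I_u$ can in principle vary discontinuously with $u$, and even the side of $L_{u,x_n}$ on which the ray of $T_u$ lies is not a priori consistent as $u$ varies. My plan is to exploit the compactness of $X$: for $u$ close to $u_0$ the Ptolemy segments $\sigma_u$ through $\{u,x_n,z\}$ subconverge in the Hausdorff topology to a Ptolemy segment $\sigma_0$ through $\{u_0,x_n,z\}$, so the ray present in $c_{u_0,x_n}$ persists to any fixed finite depth inside $T_u$ on the same side of $L_{u,x_n}$. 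Combined with a Baire-type dichotomy on $U'$ according to which side the ray sits on, this yields a neighborhood $W$ on which the rays all lie on a common side; choosing $t_0$ deep enough inside that common ray then produces the desired open interval $J$, completing the induction.
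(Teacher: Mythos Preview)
Your inductive framework and the sweep from the open set $U'\subset E'$ toward the extra point $x_n$ is exactly the paper's setup. The divergence is entirely in how you handle the uniformity of the missing parameter set across $u$. You flag this as ``the hard part'' and sketch a plan based on Hausdorff limits of Ptolemy segments plus a Baire-type side-dichotomy; neither step is actually carried out, and both are problematic as stated. The segments $\sigma_u$ are not unique, so there is no well-defined assignment $u\mapsto\sigma_u$ to take limits of; Hausdorff limits in the compact space $X$ can degenerate and need not be Ptolemy segments through $\{u_0,x_n,z\}$; and your Baire step is left entirely unspecified. So the proposal has a genuine gap precisely at the point you yourself identify as the crux.

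The paper avoids all of this with a structural fact you mention (``geodesic minus interval'') but then fail to exploit: the domain of each $c_{x_n,u}$ is $\R$ minus a \emph{single} open interval $I_u$ with $0,1\notin I_u$. You only record the weaker consequence that $T_u$ contains $0$, $1$ and a ray, and it is this loss of information that forces you toward limits and Baire. With the single-interval structure the paper gets a direct dichotomy on $U'$: either $I_u\cap[0,1]=\emptyset$ for every $u\in U'$, in which case the open inner cone $\bigcup_{u\in U'}c_{x_n,u}((0,1))$ already lies in $M$; or some $u_0$ satisfies $I_{u_0}\cap[0,1]\neq\emptyset$, which forces $I_{u_0}\subset(0,1)$ (a single interval meeting $[0,1]$ but missing both endpoints must sit strictly inside). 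In that case the paper passes to a neighborhood $U''\ni u_0$ on which $I_u\subset(0,1)$ persists, and then the open outer cone $\bigcup_{u\in U''}c_{x_n,u}((1,\infty))$ is the required open subset of $E$ contained in $M$. No Hausdorff convergence and no Baire category enter.

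The paper is admittedly brief about why $I_u\subset(0,1)$ persists on a neighborhood, so that step may still deserve a line of justification; but the single-interval dichotomy is the right organizing principle and reduces everything to a concrete local question, whereas your global Hausdorff/Baire strategy does not visibly converge to a proof.
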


\begin{proof}

This is proven by induction on the dimension of $E$, where the case of
dimension $0$ is trivial.
Assume now that the affine hull of $S=\{x_0,\ldots,x_n\}$ has dimension $(m+1)$.
Then there exists a subset $S'\sub S$, that the dimension of the affine hull
$E'$ of $S'$ is $m$ and by induction hypothesis there exists an open subset
$U'\sub E'$, with $U'\sub M$.
Furthermore there exists a point $x\in S$ such that
$x\in E\sm E'$.
For every $u\in U'$ consider the
line minus interval $c_{xu}:\R\sm I_u\to M\cap E$.
We reparameterize these lines minus interval proportionally to arclength such that 
$c_{xu}(0)=x$ and $c_{xu}(1)=u$.
Then $I_u\sub R\sm\{0,1\}$ is an open interval. If for all $u\in U'$,
$I_u\cap [0,1] =\emptyset$, then $\bigcup_{u} c_{xu}((0,1)) \sub M$ is an open subset of $E$.
If for some $u_0$ we have $I_{u_0}\cap [0,1]\ne \emptyset$ then
$I_{u_0}\sub (0,1)$ and then also $I_u\sub (0,1)$ for all $u\in U''$, where
$U''$ is an open neighborhood of $u_0$ in $U'$. Then
$\bigcup_{u\in U''}c_{x,u}((1,\infty))\sub M$
is an open subset of $E$.

\end{proof}

\begin{lemma} \label{l:2}
For all finite subsets $S\sub M$, the affine hull of $S$ in $\cA^*$ is a Euclidean subspace.
\end{lemma}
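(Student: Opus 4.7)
The plan is to combine Lemma \ref{l:1} with the converse form of Schoenberg's theorem for normed spaces, namely Corollary \ref{cor:sch}. The key observation is that the metric space $M=(X\sm\{z\},d_z)$ is itself a Ptolemy metric space, since the Ptolemy inequality is M\"obius invariant and $d_z$ lies in the same M\"obius structure as $d$. Since $A_o:M\to\cA^*$ is an isometric embedding, the image $A_o(M)\sub\cA^*$ is a Ptolemy subset of the Banach space $\cA^*$; below I identify $M$ with $A_o(M)$, as was done before the statement of the lemma.

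Now fix a finite subset $S\sub M$ and let $E\sub\cA^*$ be its affine hull. Then $E$ is a finite-dimensional affine subspace; after choosing any basepoint in $E$, the restriction of the $\cA^*$-norm turns $E$ into a finite-dimensional normed vector space, whose induced metric agrees with the restriction of the ambient metric of $\cA^*$. By Lemma \ref{l:1}, there exists a nonempty relatively open set $U\sub E$ with $U\sub E\cap M$. As an isometric subset of the Ptolemy space $M$, the open set $U$ is itself a Ptolemy metric space when equipped with the norm distance inherited from $E$.

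At this point, Corollary \ref{cor:sch} applies directly: an open subset of a normed vector space is Ptolemy if and only if the norm is Euclidean. Consequently, the norm on $E$ is induced by an inner product, i.e.\ $E$ is a Euclidean affine subspace of $\cA^*$, which is exactly the conclusion of the lemma.

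The only subtlety in this argument is that one must be sure that the Ptolemy inequality really does transfer to $U$ as a subset of $E$ with its norm metric, rather than as a subset of $M$ with some abstract distance that only happens to come from an isometric embedding. This is however immediate from the fact that $A_o$ is an isometry and that restricting the $\cA^*$-norm to an affine subspace gives exactly the norm of that subspace; so no genuine obstacle arises, and the proof reduces cleanly to an invocation of Lemma \ref{l:1} followed by Corollary \ref{cor:sch}.
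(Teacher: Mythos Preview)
Your proposal is correct and follows exactly the same route as the paper: invoke Lemma \ref{l:1} to find an open subset of $E$ lying in $M$, observe that this open set is Ptolemy (since $M$ is), and then apply Corollary \ref{cor:sch} to conclude that the norm on $E$ is Euclidean. The paper's proof is terser (and contains an apparent typo, writing $E\cup M$ where $E\cap M$ is meant), but your additional remarks about finite-dimensionality of $E$ and the transfer of the Ptolemy property under the isometric embedding $A_o$ merely make explicit what the paper leaves implicit.
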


\begin{proof}
$E\cap M$ contains an open subset of $E$ by Lemma \ref{l:1}, and $E\cup M$ is a Ptolemy space.
Thus, by Corollary \ref{cor:sch} of Schoenberg's result, $E$ is a Euclidean space.
\end{proof}

\begin{lemma} \label{l:3}
Let $E$ be an affine Euclidean subspace of $\cA^*$,
then $(E\cap M)\cup \{\infty\}$ is a $3$-point Ptolemy segment space.
\end{lemma}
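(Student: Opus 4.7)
The plan is to verify the $3$-point Ptolemy segment property of $(E\cap M)\cup\{\infty\}$ case by case. If the chosen triple has the form $\{p_1,p_2,\infty\}$ with $p_1,p_2\in E\cap M$, I would take any geodesic-minus-interval $c_{p_1p_2}\colon\R\sm I\to M$ from $p_1$ to $p_2$. By equation (\ref{eq:add}) its image in $\cA^*$ lies on the affine line $L$ spanned by $p_1$ and $p_2$; since $E$ is affine and contains $p_1,p_2$, we have $L\subset E$. Adjoining $\infty$ yields a Ptolemy circle (when $I=\emptyset$) or a Ptolemy segment (when $I\neq\emptyset$) inside $(E\cap M)\cup\{\infty\}$ through $p_1,p_2,\infty$.

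If all three points $p_1,p_2,p_3$ lie in $E\cap M$, I would first invoke the $3$-point Ptolemy segment hypothesis on $X$ to produce a Ptolemy circle or segment $\sigma\subset X$ through them. In the easy subcase $z\in\sigma$, the set $\sigma\sm\{z\}$ is a geodesic-minus-interval in $M$ (a complete geodesic line when $\sigma$ is a circle, a geodesic ray when $z$ is an endpoint of the segment, or a pair of rays on a common affine line when $z$ is interior to the segment). By (\ref{eq:add}) its image sits on the affine line through any two of the $p_i$, which lies in $E$; hence $(\sigma\sm\{z\})\cup\{\infty\}$ is the desired Ptolemy circle or segment in $(E\cap M)\cup\{\infty\}$.

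The main difficulty is the remaining subcase $z\notin\sigma$, in which $\sigma\subset M$ is a curve that a priori could bend out of $E$ into other directions of $\cA^*$. My plan is to show $\sigma\subset E$ by a four-point Ptolemy argument. For an arbitrary auxiliary point $q\in\sigma$, the quadruple $p_1,p_2,p_3,q$ lies on $\sigma$ in an admissible cyclic (respectively linear) order and therefore satisfies the Ptolemy equality. By Lemma~\ref{l:2} the affine hull $F_4\subset\cA^*$ of these four points is Euclidean, so the equality characterization in Ptolemy's Theorem~\ref{theo-pt}, applied inside $F_4$, forces the four points to lie on an ordinary round circle (or a line in the collinear degenerate case). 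When $p_1,p_2,p_3$ are non-collinear this round circle is uniquely determined and is contained in the $2$-plane $F_3=\operatorname{aff}(p_1,p_2,p_3)\subset E$, so $q\in F_3\subset E$; the collinear case is handled similarly and in fact cannot occur when $\sigma$ is a Ptolemy circle, by a topological dimension argument. Since $q\in\sigma$ was arbitrary, $\sigma\subset E\cap M$. The genuine obstacle is exactly this last step, where a single four-point Ptolemy relation must be promoted to containment of the whole curve $\sigma$ in the finite-dimensional Euclidean slice $E$; the enabling device is Lemma~\ref{l:2}, which packages Schoenberg's rigidity theorem to guarantee that every finite subset of $\sigma$ lives in a Euclidean affine subspace, making the classical equality characterization of Ptolemy's theorem applicable.
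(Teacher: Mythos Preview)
Your proposal is correct and follows essentially the same route as the paper: the heart of the argument in both is to take a fourth point $q$ on the Ptolemy segment $\sigma$ through $p_1,p_2,p_3$, invoke Lemma~\ref{l:2} to know that the affine hull of $\{p_1,p_2,p_3,q\}$ is Euclidean, and then apply the equality case of the classical Ptolemy theorem (Theorem~\ref{theo-pt}) to force $q$ into the affine hull of $p_1,p_2,p_3$, hence into $E$.

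The only difference is organizational. The paper argues uniformly: it takes $\sigma\subset M\cup\{\infty\}$, picks $x_4\in\sigma$, reduces to $x_4\neq\infty$, and runs the four-point argument directly, showing $x_4$ lies in the affine hull $E'$ of the three given points. You instead split off the cases $\infty\in\{p_1,p_2,p_3\}$ and $z\in\sigma$ and dispatch them via the geodesic-minus-interval structure and equation~(\ref{eq:add}) before arriving at the main case. Your extra care is not wasted: the paper's terse phrasing ``the affine hull of $\{x_1,x_2,x_3\}$'' is awkward when one of the $x_i$ equals $\infty$, and your explicit treatment of that case is cleaner. Conversely, your separate subcase $z\in\sigma$ is not really needed, since the four-point argument already covers it (just choose $q\neq\infty$). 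Finally, your aside that the collinear configuration cannot occur when $\sigma$ is a circle is correct but unnecessary: in that case the argument still gives $q\in L\subset E$, which is all you need.
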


\begin{proof}
Let $x_1,x_2,x_3\in (E\cap M)\cup \{\infty\}$ be distinct points.
By assumption on $M$ the exists a Ptolemy segment $\sigma$ in $M\cup\{\infty\}$
containing these points. We have to show that $\sigma \sub E\cup\{\infty\}$.
We actually show that $\sigma \sub E'\cup\{\infty\}$, where $E'$ is the affine hull
of $\{x_1,x_2,x_3\}$. Let $x_4\in \sigma$, we show that $x_4 \in E'\cup\{\infty\}$.
We can assume $x_4\ne \infty$ and furthermore that all four points
are distinct.
Take the affine hull $E''$ of $\{x_1,x_2,x_3,x_4\}$, then $E'\sub E''$.
By Lemma \ref{l:2} $E''$ is a euclidean space and the four points satisfy the Ptolemy
equality. By the classical Ptolemy theorem this implies that $x_4$ is contained
in the affine hull of the three other points.
\end{proof}

\begin{lemma} \label{l:4}
The affine hull of $M$ is contained in a finite dimensional Euclidean space.
\end{lemma}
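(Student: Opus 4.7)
The plan is to suppose that $V:=\text{aff}_{\cA^*}(M)$ is infinite-dimensional and produce, for every $n\in\N$, a subset of $M$ that is $d_z$-isometric to the standard closed Euclidean $n$-ball of radius one and whose identity map into $(X,d)$ is bi-Lipschitz with constants independent of $n$. Since any such ball contains $n+1$ pairwise $d_z$-separated points (the vertices of an inscribed regular simplex), this will force $(X,d)$ to contain arbitrarily large $\delta$-separated subsets for a fixed $\delta>0$, contradicting the total boundedness of compact $(X,d)$.

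First I would fix any $p_0\in M$ and, using $\dim V=\infty$, inductively choose $p_1,p_2,\ldots\in M$ with $p_n\notin E_{n-1}:=\text{aff}(p_0,\ldots,p_{n-1})$. By Lemma~\ref{l:2} each $E_n$ is an $n$-dimensional Euclidean subspace of $\cA^*$, and Lemma~\ref{l:3} combined with Proposition~\ref{prop:eucl} tells us that $E_n\cap M$ equals, up to a Euclidean isometry of $E_n$, either all of $E_n$, a closed half-space in $E_n$, or the complement of an open ball in $E_n$. In every case I then set $q_n:=p_0+u_n$ for a unit vector $u_n\in E_n$ pointing away from the obstructing boundary: any unit vector when $E_n\cap M=E_n$; the unit inward normal when $E_n\cap M$ is a half-space; and the unit outward radial direction (from the centre of the missing ball through $p_0$) in the third case. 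A direct geometric computation shows in each case that the closed $d_z$-unit ball $B_n\subset E_n$ centred at $q_n$ lies entirely in $E_n\cap M$, so $(B_n,d_z)$ is isometric to the standard Euclidean $n$-ball of radius one.

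Next I would translate from $d_z$ back to $d$ using the identity $d(x,y) = d_z(x,y)\,\phi(x)\phi(y)$ on $M$, where $\phi(x):=d(x,z)$. Applying the triangle inequality in the $d$-triangles $(p_0,q_n,z)$ and $(q_n,x,z)$ for $x\in B_n$, together with $d_z(p_0,q_n)=1$, $d_z(x,q_n)\le 1$, and $\diam_d X<\infty$, I obtain uniform bounds $0<a\le\phi(x)\le b<\infty$ on all of $B_n$, with $a,b$ depending only on $\phi(p_0)$ and $\diam_d X$ (not on $n$). The relation $d/d_z=\phi\,\phi$ then makes the identity map $(B_n,d_z)\to(B_n,d)$ bi-Lipschitz with constants $a^2,b^2$ independent of $n$. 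The $n+1$ simplex vertices in $(B_n,d_z)$ at pairwise distance $\ge\sqrt 2$ thus become $n+1$ points pairwise at $d$-distance at least $\sqrt{2}\,a^2$ inside the compact space $(X,d)$, which contradicts total boundedness for $n$ large enough.

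The main obstacle is arranging, for each $n$ simultaneously, that the ball $B_n$ has \emph{uniform} radius and is centred at a point $q_n$ with $\phi(q_n)$ uniformly bounded away from $0$; without both uniformities the packing argument in the final step would degenerate as $n$ grows. The displacement trick $q_n=p_0+u_n$ with $\|u_n\|=1$ handles both issues at once — uniform thickness is guaranteed by the precise classification in Proposition~\ref{prop:eucl} (which is why Proposition~\ref{prop:eucl} must be proved first), while the relation $d(p_0,q_n)=\phi(p_0)\phi(q_n)$ together with the $d$-triangle inequality forces $\phi(q_n)$ into a fixed compact subinterval of $(0,\infty)$.
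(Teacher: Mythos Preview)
Your argument is essentially correct and follows the same overall strategy as the paper --- assume the affine hull is infinite-dimensional, use Lemmas~\ref{l:2}, \ref{l:3} and Proposition~\ref{prop:eucl} to classify $E_n\cap M$, extract large Euclidean pieces near a fixed basepoint, and contradict compactness. The execution, however, differs.

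The paper observes that in each of the three cases of Proposition~\ref{prop:eucl} one can find a full $(n-1)$-dimensional \emph{hyperplane} $P^{n-1}\subset E^n\cap M$ passing through the fixed point $x_0$, and that these can be chosen nested: $P^{n-1}\subset P^n\subset\cdots$. Since every $P^{n-1}$ passes through $x_0$, any compact neighbourhood of $x_0$ in $(M,d_z)$ would contain Euclidean balls of every dimension, contradicting local compactness of $M$ directly. No transfer back to $(X,d)$ is needed.

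Your route --- producing unit $d_z$-balls $B_n$ at bounded $d_z$-distance from $p_0$ and then pulling the contradiction back to $(X,d)$ via the inversion formula $d=d_z\cdot\phi\phi$ --- also works, but the bi-Lipschitz transfer is an unnecessary detour. All your $B_n$ already sit inside the closed $d_z$-ball $\overline{B}_{d_z}(p_0,2)$, and this ball is compact: any sequence in it is $d_z$-bounded, hence bounded away from $z$ in the topology of $X$, and therefore subconverges in $X\setminus\{z\}=M$ by compactness of $X$. So you could have reached the packing contradiction directly in $(M,d_z)$ without ever invoking $d$. This also sidesteps a small gap in your write-up: you use $\diam_d X<\infty$, but an \emph{extended} metric $d$ may have a remote point $\omega$, making $\diam_d X=\infty$; one must first replace $d$ by a bounded M\"obius-equivalent metric (which always exists) before your upper bound on $\phi$ is valid.
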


\begin{proof}
It suffices to show that there is a number $k \in \N$, such that
the dimension of the affine hull of any finite subset
$S \sub M$ is bounded by $k$.
If we assume the contrary, there are points $x_0,x_1,\ldots \in M$, such
that the affine hull of
$x_0,\ldots,x_n$ is an $n$-dimensional Euclidean affine subspace $E^n \sub \cA^*$.
By Lemma \ref{l:3}, $M\cap E^n$ is a $3$-point Ptolemy segment space, and hence
$M\cap E^n$ is either $E^n$, isometric to a closed halfspace
or isometric to the complement of a distance ball in $E^n$ by Proposition
\ref{prop:eucl}.
In every case $E\cap M$ contains a complete hyperplane $P^{n-1}$ of dimension $(n-1)$ through the point $x_0$. We can arrange the hyperplanes such that $P^n\sub P^{n+1}$
for all $n$. Since $M$ is locally compact, this is a contradiction.
\end{proof}

Now we can finish the proof of Theorem \ref{theo-3pt-int}.
By Lemma \ref{l:4}, $M$ can be considered as a subset of the classical space
$\R^n\cup \{\infty\}$ and for this case the result follows from Proposition \ref{prop:eucl}. \\

Now we give the

\begin{proof} (of Proposition \ref{pro:Busemann_convex_segment})

Let $c:[0,\infty )\longrightarrow X\setminus \{ z\}$ be an arbitrary geodesic ray in  $(X\setminus \{ z\} , d_z)$ and let $b$ a Busemann function of that ray.
Consider again  $x,y,m \in X$ with
$a:=d(x,y)/2=d(x,m)=d(m,y)$.
As in the proof of \ref{pro:Busemann_convex_circle}
we have again to show the equation (\ref{eq:0}):

\begin{equation} 
b(m)\ge \frac{1}{2}(b(x)+b(y))
\end{equation}

Consider $w_i=c(i)$ for $i\to \infty$. By assumption there exists a
Ptolemy segment $\sigma_i$ containing $x,y,w_i$.

We restrict the segment that it contains two of these points as 
boundary points and one as an interior point.
After choice of a subsequence, we can assume that one
of the following three cases occurs: all $\sigma_i$ have

(i) $y$ as interior point, (i') $x$ as interior point ,(ii) $w_i$ as interior point.

The cases (i) and (i') are symmetric, so we only consider (i) and (ii).

Now the case (i) is exactly the case which we considered in the proof
of Proposition \ref{pro:Busemann_convex_circle}. 
It remains to consider the second case.

In case (ii)
there exists a PT segment $\sigma_i$ with boundary points $x$ and $y$ and interior
point $w_i$. Here we choose the orientation, such that
$y<w_i<x$ and again we choose $u_i\in \sigma_i$, with
$y<u_i<w_i<x$ such that $|yu_i|=a$.
The PT equality for this segment implies again
$$3a\,d(y,w_i)\ge d(y,w_i)\, d(x,u_i) = 2a\, d(u_i,w_i)+\, a\, d(x,w_i)$$
and hence we again obtain
estimate (\ref{eq:1}).

Again for a subsequence $u_i\to u$ and we obtain
(\ref{eq:2}).

The PT equality for $\sigma_i$ says
$$d(x,u_i)\,d(w_i,y)=d(x,y)\,d(w_i,u_i)\,+\,d(y,u_i)\, d(w_i,x),$$
this implies again for $i\to \infty$, that
$d(x,u)=d(x,y)+d(y,u)=3a$,
which again implies that $y$ is a midpoint of
$m$ and $u$ and hence we have (\ref{eq:3}) again.

In the same way as above we obtain (\ref{eq:0}).

\end{proof}

Finally we give the

\begin{proof} (of Proposition \ref{prop:eucl})
Consider
$X\sub \R^k\cup\{\infty\}$
as in the assumption.
Let $H$ be the smallest affine subspace of $\R^k$ containing
$X\setminus \{\infty\}$.
To simplify the notation we assume that
$H=\R^k$ (otherwise the result follows by induction).

Case 1: $X\setminus\{\infty\}= \R^k$, then we are finished.

\vspace{0.5cm}

Case 2: Assume that $X\setminus\{\infty\}$ is contained in some halfspace.
By changing coordinates we may assume 
$X\setminus\{\infty\} \sub \R^{k-1}\times [0,\infty)$.
Let $x,y \in X\setminus\{\infty\}$ and assume that for the last
coordinates we have
$x_k <y_k$. Since
$x,y,\infty$ are on a circle-segment (i.e. line-segment) contained in $X$,
but $X\setminus\{\infty\}$ does not intersect the lower halfspace,
we see that the semiline
$x+t(y-x)$, $t\ge 0$ is completely contained in $X$.

It is elementary (but somewhat combersome) to prove the following fact:

\vspace{0.3cm}
Let $B\sub \R^{k-1}\times [0,\infty)$ be a closed subset, with the following properties:

(1) $B$ contains $(k+1)$ affinely independent points.

(2) if $x,y \in B$ with $x_k < y_k$ , then also $x+t(y-x) \in B$ for $t\ge 0$.

\noindent Then $B=\R^{k-1}\times [a,\infty)$ for some $a\ge 0$.

This the proves the second case.

\vspace{0.3cm}

Case 3: In the remaining case we consider
that $X\setminus\{\infty\}$ is not contained in a halfspace
and not the complete $\R^k$.
Then there exists an open distance ball $D\subset \R^k \setminus X$,
such that there exists a point $x\in \partial D \cap X$.
By changing coordinates we can assume that $x=0$ is the origin.
Now apply the involution
$x\mapsto \frac{x}{\|x\|}$,
$0\mapsto \infty$, $\infty \mapsto 0$,
which is a M\"obius map and maps circles to circles.
Under this involution $D$ goes to an open halfspace and
hence we have reduced this case to case 2.

\end{proof}

\bigskip
\begin{tabbing}

Thomas Foertsch,\hskip10em\relax \= Viktor Schroeder,\\ 

Mathematisches Institut,\>
Institut f\"ur Mathematik, \\

Universit\"at Bonn,\> Universit\"at Z\"urich,\\
Endenicher Allee 60, \>
 Winterthurer Strasse 190, \\

D-53115 Bonn, Deutschland\>  CH-8057 Z\"urich, Switzerland\\

{\tt foertsch@math.uni-bonn.de}\> {\tt vschroed@math.uzh.ch}\\

\end{tabbing}

\end{document}